\documentclass[11pt]{article}

\usepackage{amsmath}
\usepackage{amsthm}
\usepackage{mathtools}
\usepackage{booktabs}
\usepackage{dsfont}
\usepackage{url}

\usepackage{ifxetex,ifluatex}
\ifxetex
\usepackage{xltxtra}
\else
\ifluatex
\usepackage{luatextra}
\else
\usepackage{amssymb}
\usepackage[utf8]{inputenc}
\usepackage[T1]{fontenc}
\usepackage{mathpazo,tgcursor,tgpagella}
\usepackage{textcomp}
\usepackage{supertabular}
\usepackage[footnotesize,nooneline,bf]{caption}
\usepackage{algorithm}
\usepackage{algpseudocode}

\usepackage{pdflscape}
\usepackage{afterpage}

\usepackage{framed,tocloft,multicol,tikz}
\usetikzlibrary{shapes,arrows}

\csname else\expandafter\endcsname
\romannumeral -`0
\fi
\fi
\iftrue\relax%
\defaultfontfeatures{Ligatures=TeX}
\setmainfont{TeX Gyre Pagella}
\setmonofont{TeX Gyre Cursor}
\usepackage[math-style=TeX, vargreek-shape=TeX]{unicode-math}
\setmathfont{TG Pagella Math}
\fi

\usepackage[margin=1in]{geometry}

\usepackage{sistyle}
\SIthousandsep{,}

\usepackage{graphicx,color,authblk}
\usepackage[authoryear]{natbib}
\usepackage[compact]{titlesec} 
\usepackage{enumitem}

\usepackage{pdfpages}

\usepackage[bookmarksnumbered,bookmarksopen,plainpages=false,pdfpagelabels,%
breaklinks,colorlinks,citecolor=blue,linkcolor=blue,hyperindex]{hyperref}

\hypersetup{
  pdftitle={Solving MIPs via Scaling-based Augmentation}
  pdfauthor={Pierre Le Bodic, Jeffrey W. Pavelka, Marc E. Pfetsch, Sebastian Pokutta},
  pdfkeywords={},
  pdfsubject={MSC2000 Primary ; Secondary}
}


\newcommand{\R}{\mathds{R}}
\newcommand{\Z}{\mathds{Z}}
\newcommand{\Q}{\mathds{Q}}
\newcommand{\N}{\mathds{N}}
\newcommand{\ones}{\mathds{1}}

\DeclareMathOperator{\supp}{supp} 
\DeclareMathOperator{\poly}{poly}
\DeclareMathOperator{\sgn}{sgn}

\DeclareMathOperator*{\argmax}{\arg\!\max}

\newcommand{\cube}[1]{[0,1]^{#1}}
\newcommand{\face}[1]{\left\{#1\right\}}
\newcommand{\conv}[1]{\operatorname{conv}\left(#1\right)}
\newcommand{\card}[1]{\left\lvert#1\right\rvert}
\newcommand{\onorm}[1]{\left\lVert#1\right\rVert_1}

\newcommand{\mnorm}[1]{\left\lVert#1\right\rVert_\infty}
\newcommand{\supf}[1]{\supp(#1)}
\newcommand{\define}{\coloneqq}
\newcommand{\enifed}{\eqqcolon}
\newcommand{\order}[1]{O({#1})}
\newcommand{\binSet}{\{0,1\}}

\newcommand{\floor}[1]{\lfloor{#1}\rfloor}
\newcommand{\ceil}[1]{\lceil{#1}\rceil}
\newcommand{\abs}[1]{\lvert{#1}\rvert}

\newcommand*{\set}[2]{\left\{{#1}\,\middle|\,{#2}\right\}}


\newtheorem{theorem}{Theorem}[section]
\newtheorem{lemma}[theorem]{Lemma}

\newtheorem{observation}[theorem]{Observation}
\newtheorem{corollary}[theorem]{Corollary}

\theoremstyle{definition}

\newtheorem{defn}[theorem]{Definition}

\theoremstyle{remark}

\newtheorem{rem}[theorem]{Remark}

\title{Solving MIPs via Scaling-based Augmentation}
\author[1]{Pierre Le Bodic}
\affil[1]{ISyE,
  Georgia Institute of Technology,
  Atlanta, GA 30332, USA.
  \textit{Email:}~lebodic@gatech.edu}

\author[2]{Jeffrey W. Pavelka}
\affil[2]{ISyE,
  Georgia Institute of Technology,
  Atlanta, GA 30332, USA.
  \textit{Email:}~jpavelka@gatech.edu}

\author[3]{Marc E. Pfetsch}
\affil[3]{Department of Mathematics, TU Darmstadt, Germany.
  \textit{Email:}~pfetsch@opt.tu-darmstadt.de}

\author[4]{Sebastian Pokutta}
\affil[4]{ISyE,
  Georgia Institute of Technology,
  Atlanta, GA 30332, USA.
  \textit{Email:}~sebastian.pokutta@isye.gatech.edu}

\begin{document}

\maketitle

\begin{abstract}
  \noindent
  Augmentation methods for mixed-integer (linear) programs are a class of
  primal solution approaches in which a current iterate is augmented to a
  better solution or proved optimal. It is well known
  that the performance of these methods, i.e., number of iterations needed,
  can theoretically be improved by scaling methods. We extend these results
  by an improved and extended convergence analysis, which shows that bit
  scaling and geometric scaling theoretically perform similarly well
 in the worst case for 0/1 polytopes as well as show that in some
 cases geometric scaling can outperform bit scaling arbitrarily. 

 We also investigate the performance of implementations of these
 methods, where the augmentation directions are computed by a MIP
 solver. It turns out that the number of iterations is usually
 low. While scaling methods usually do not improve the performance for
 easier problems, in the case of hard mixed-integer optimization
 problems they allow to compute solutions of very good quality and are
 often superior.
\end{abstract}

\section{Introduction}
\label{sec:introduction}

A standard approach to solving mixed integer linear programs (MIPs) is
via a combination of branch-and-bound and cutting planes. This
approach can be considered largely as \emph{dual}, since in practice the nodes of the 
branch-and-bound tree are, to a large extent, pruned by objective bounds. An alternative
view to solving MIPs is via \emph{primal augmentation approaches}. Here the
idea is to start from a feasible solution to the considered MIP and
then move to a new solution with improved objective function value by
means of an \emph{augmentation step}.

In this work, we will consider a specific class of primal augmentation
approaches, namely those arising from \emph{scaling}. In a nutshell,
here the objective function is adjusted to include a \emph{potential}
that guides the search away from the boundary, deep into the
feasible regions similar to interior point methods in convex
optimization. In the same vein, a scaling parameter \(\mu\)
controls the tradeoff between depth in the feasible region and
optimizing the objective function. A key insight is that via
appropriate scaling only a polynomial number of
augmentation steps is needed. If now the computation of an augmenting direction
can be performed fast, one can obtain, \emph{in theory}, fast
algorithms for solving MIPs. For example, this augmentation can be
performed very fast for network flows, which, in fact, motivated several
scaling approaches for MIPs in the first
place (see e.g., \cite{wallacher1992combinatorial},
\cite{orlin1992new}). Traditional scaling approaches in the context
of network flows include the well-known capacity scaling
(scaling in the dual) and cost scaling (scaling in the primal).

While our focus is on the \emph{computational feasibility and performance} of these
scaling approaches for MIPs, we also provide new theoretical insights in
terms of worst-case examples for bit scaling, and we slightly improve
the analysis of geometric scaling. 

\subsection{Related work}
\label{sec:related-work}

Primal augmentation approaches in the context of MIPs have been
well-studied, both from an algebraic point of view using test sets, but also in the context of solving linear programs
and mixed-integer (nonlinear) programs exactly and approximately. 
\cite{Gra75} studied test sets (or Graver bases), i.e., the sets of feasible
(integer) directions, which give rise to a natural converging augmentation
algorithm; see also \cite{Sca97}. Algebraic approaches (see
e.g., \cite{de2013algebraic,de2014augmentation} and the references
contained therein) are usually based on an algebraic characterization of
test sets; then an improving direction is used for augmentation.

Augmentation methods have recently become important to investigate
mixed-integer nonlinear problems (MINLPs), see, e.g., \cite{HemLW10} and
\cite{Onn10} for an overview. Here, test sets are used to solve or
approximate MINLPs; some selected references are
\cite{HemKW14,HemOW11,LeeORW12,LeeOW08,LoeHKW08}. However, in this paper we
concentrate on mixed-integer linear programs.

In \cite{bienstock1999approximately,bienstock2002potential}, among
other approaches, an exponential
penalty function framework is considered for (approximately) solving
linear programs. Interestingly, this approach can be considered
somewhat dual to the approximate LP solving framework via
multiplicative weight updates in \cite{plotkin1995fast} for fractional
packing and covering problems (see also
\cite{arora2012multiplicative}). In \cite{letchford2003augment}, the
authors consider an integrated augment-and-branch-and-cut
framework for mixed 0/1 programs. A proximity search heuristic is considered in
\cite{fischetti2014proximity}, where the objective function is
replaced by a proximity function to explore the neighborhood around a
feasible solution.

Our approach here is mostly based on \emph{geometric scaling} introduced in \cite{schulz2002complexity}, which in
turn is inspired by classical scaling algorithms for flow problems and
certain linear programs (see e.g., \cite{wallacher1992combinatorial},
\cite{orlin1992new}, \cite{mccormick2000minimum}), as well as \emph{bit
scaling} introduced in \cite{schulz19950}, which is based on an article by
\cite{edmonds1972theoretical}. Other approaches that use scaling
implicitly are the \emph{multiplicative weights update method}
(see e.g., \cite{arora2012multiplicative}), which is also at the core
of the algorithm in \cite{garg2007faster} for
multicommodity flows. 

On a high level, the augmentation methods considered here are similar to
proximal methods for nonlinear programs (see, e.g., \cite{Rockafellar1976})
in the sense that the deviation from the current iterate is penalized in
the objective function; this is also the viewpoint of
\cite{fischetti2014proximity}, mentioned above. On the other hand, local branching,
see~\cite{FisL03}, would be the analogue of trust region methods, see,
e.g., \cite{ConnGouldToint2000}.

\subsection{Contributions}
\label{sec:contributions}

Our contributions fall into two main categories:
\begin{enumerate}
\item \emph{Theoretical analysis of primal scaling approaches.} In the
  first part we revisit bit scaling and geometric scaling. We
  establish a new upper bound on the number of required augmentations for
  geometric scaling (Theorem~\ref{thm:mraScalingRt}), which improves
  over the bound of \cite{schulz2002complexity} by a \(\log n\)
  factor, and we derive an alternative variant of geometric scaling in
  the 0/1 case that does not require the describing system to be in equality
  form (Theorem~\ref{thm:geoScalGen01}). As a consequence, this shows that
  geometric scaling is (at least) as versatile as bit scaling, since for 0/1 polytopes
  geometric scaling is no worse than bit 
  scaling (Corollaries~\ref{cor:worstCaseIdentical} and
  \ref{cor:identicalWorstCaseGen01}). We also establish a
  simple improvement for bit scaling and geometric scaling over 0/1 polytopes, whenever
  a certain \emph{width} (number of nonzero entries in any integral
  solution) is low (Theorem~\ref{thm.limitedNonzeros}). We then continue to
  show that bit scaling can be arbitrarily worse compared to
  geometric scaling, by providing an example where \(O(n)\)
  augmentations are sufficient for geometric scaling, but bit scaling can require an
  arbitrary number of augmentations (Section~\ref{sec:worst-case-examples}).
  Moreover, the number of bit scaling augmentations meets the
  theoretical upper bound up to a constant factor.

\item \emph{Computational tests of various variants of scaling.} In the
  second part, we compare implementations of bit scaling, maximum-ratio
  augmentation (MRA), and geometric scaling. Additionally, we implemented a
  primal heuristic based on geometric scaling and a straightforward
  augmentation method that simply checks for an improving solution (see
  Section~\ref{sec:implementation}). The computations are performed on
  three different testsets. The results show that the augmentation methods
  use surprisingly few iterations. While MRA is relatively slow, bit
  scaling and geometric scaling perform well, but the application of bit
  scaling is limited to instances with different objective coefficients. It
  also turns out that the augmentation methods do not seem to be helpful
  for instances that can be solved in reasonable time, e.g., on MIPLIB 2010
  benchmark instances. However, geometric scaling helps to find primal
  solutions of very good quality for very hard instances and outperforms
  the default settings. This advantage also carries over to the primal
  heuristic based on geometric scaling, which also performs very well.
\end{enumerate}

\subsection{Outline}
\label{sec:outline}

In Section~\ref{sec:preliminaries} we provide a brief summary of our
notation and preliminaries. In Section~\ref{sec:scaling-techniques} we
then consider bit scaling and geometric scaling, review known results,
and provide various improvements and comparisons. We then provide a
worst-case example for bit scaling in
Section~\ref{sec:worst-case-examples}, showing that geometric scaling
can outperform bit scaling by an arbitrary factor.  In
Section~\ref{sec:implementation} we discuss our implementations and
in Section~\ref{sec:comp-results} provide a comprehensive set of
computational results.

\section{Preliminaries}
\label{sec:preliminaries}

Our goal is to solve
\[
\max \set{cx}{A x = b,\; l \leq x \leq u,\; x \in \Z^n},
\]
where \(A \in \R^{m \times n}\), \(b \in \R^m\), and \(l\), \(u\), \(c \in
\R^n\). Note that we write \(xy\) for the inner product of two vectors \(x\), \(y
\in \R^n\). By assumption \(l\) and \(u\) are finite, and thus \(P \define \set{x \in
  \R^n}{Ax = b,\, l \leq x \leq u}\) is a polytope. We let \(P_I \coloneqq \conv{P \cap \Z^n}\) be
the \emph{integral hull} of~\(P\).

We will consider systems in equality form (except for the bounds), which
is important since we apply potential functions to the above form. In
Section~\ref{sec:geom-scal-arbitr}, we will relax the equality form
condition for the case that \(P \subseteq \cube{n}\) and effectively allow
for arbitrary representations.

We will often work with \emph{directions} \(x - y\) induced by two vectors
\(x, y \in \R^n\). If \(P \subseteq \R^n\) is a polyhedron, we say that \(z
\in \R^n\) is a \emph{feasible direction} for \(x \in P\) if \(x + z \in
P\). Moreover, \(z\) is an \emph{augmenting direction} if \(cz > 0\), and it
is an \emph{integer feasible direction} if \(z \in \Z^n\).

We denote by \(\ones\) the all-one vector and write \([n] \define \{1,
\dots, n\}\) for \(n \in \Z_{+}\). For a vector \(x \in \R^n\), let
\(\supf{x} \define \set{j \in [n]}{x_j \neq 0}\) be the \emph{support}
of~\(x\). All logarithms in this paper will be to
the basis~\(2\). All other notation is standard and can be found in
\cite{schrijver1986theory} and \cite{nw1988}, for example.

\section{Scaling techniques}
\label{sec:scaling-techniques}

The idea of \emph{scaling} is to replace a given optimization problem with
a sequence of \emph{augmentation problems}. The augmentation steps are
controlled by means of an appropriate objective function using a potential
function. This ensures a minimum relative progress in each augmentation and
will lead to oracle-polynomial running times using an augmentation oracle.

\subsection{Bit scaling}
\label{sec:bit-scaling}

We first present the \emph{bit scaling technique} for solving 0/1
programs (see \cite{schulz19950}; also
\cite{edmonds1972theoretical,graham1995handbook}). We want to maximize
a linear function \(c x\),
with \(c \in \Z^n\)
over \(P \cap \{0,1\}^n\)
with maximum absolute value \(\mnorm{c}\).
For the sake of exposition, and without loss of generality, we confine
ourselves to \(c \geq 0\)
by applying suitable coordinate flips \(x_i \mapsto 1-x_i\).
In the mixed-integer version of the algorithm (see
Algorithm~\ref{alg:bitScalingMip}), however, we will deal with an
arbitrary \(c \in \Z^n\).
The classical bit scaling algorithm is given in
Algorithm~\ref{alg:bitScaling}.

\begin{algorithm}
  \caption{\label{alg:bitScaling}Bit scaling} 
  \textbf{Input: } Feasible solution \(x^0\)\\
  \textbf{Output: } Optimal solution of \(\max \set{cx}{x \in P \cap \Z^n}\)
  \begin{algorithmic}
    \State \(\mu \leftarrow 2^{\ceil{\log C}}\), \(\tilde{x} \leftarrow x^0\)
    \Repeat
    \State \(c^\mu \leftarrow \floor{c / \mu}\)
    \State \textbf{compute} \(x \in P\) integral with \(c^\mu(x - \tilde{x}) > 0\)
    \Comment{improve w.r.t.\ \(c^\mu\) approximation of \(c\)} 
    \If {there is no feasible solution}
    \State \(\mu \leftarrow \mu/2\)
    \Else
    \State \(\tilde{x} \leftarrow x\) \Comment{update solution and repeat}
    \EndIf
    \Until{\(\mu < 1\)}
    \State \Return \(\tilde{x}\) \Comment{return optimal solution}
  \end{algorithmic}
\end{algorithm}

Scaling algorithms typically operate in phases: the
algorithm improves the current objective function within a phase as long as
possible and then goes to the next phase: we call \emph{augmentation}
the step where we compute \(x\in P\) that
improves the current objective function and \emph{(scaling) phase} all steps
that use the same scaling factor \(\mu\); bounds are
typically given as a product of an upper bound on the number of phases and an
upper bound on the number of augmentations per phase. 

\citet[Theorem 2]{schulz19950} have proven that Algorithm~\ref{alg:bitScaling}
requires \(O(n \log C)\) augmentation steps with \(C \define \mnorm{c}
+ 1\). We now prove that the number of
augmentation steps is bounded by \(n \cdot (1+\ceil{\log C})\), and also
characterize the absolute gap closed at each scaling phase. In Section
\ref{sec:worst-case-examples} we prove that the bound on the number of 
augmentation steps is tight.

\begin{lemma}[Bit scaling]
  \label{lem:bitScaling}
  Let \(P \subseteq \cube{n}\) be a polytope, and let
  \(c \in \Z^n_+\) with \(C \define \mnorm{c} + 1\) the largest absolute value of
  its components.
  Then Algorithm~\ref{alg:bitScaling} solves the optimization problem
  \(\max \set{cx}{x \in P \cap \Z^n}\) with at most \(n \cdot (1+\ceil{\log C})\)
  augmenting steps.
  Moreover, let \(\tilde{x}\) be the solution at the end of
  scaling phase \(\ell\) and \(x^\star\) be an optimal solution for the
  original problem. Then the absolute gap \(c(x^\star - \tilde{x})\) is bounded by
  \[
  2^{\ceil{\log C} - \ell} \cdot \max \set{\ones x}{x \in P}.
  \]
\end{lemma}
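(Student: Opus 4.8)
The plan is to analyze Algorithm~\ref{alg:bitScaling} phase by phase. Write $K \define \ceil{\log C}$ and index the scaling phases by $\ell = 0, 1, \dots, K$, so that phase $\ell$ uses the factor $\mu_\ell \define 2^{K-\ell}$ (in particular $\mu_K = 1$, and there are $K+1 = 1 + \ceil{\log C}$ phases in total); since $\mu_0 = 2^K \ge C > \mnorm{c}$, the first phase has $c^{\mu_0} = \floor{c/2^K} = 0$ and performs no augmentation at all. Two elementary observations drive the argument. First, a \emph{bit relation}: for every $\ell \ge 1$, since $\mu_\ell$ is a positive integral power of two, $c^{\mu_\ell} = \floor{c/\mu_\ell} = 2\,c^{\mu_{\ell-1}} + b$ for some $b \in \binSet^n$ (the coordinatewise next bit), and likewise $c = \mu_\ell\, c^{\mu_\ell} + r$ with $0 \le r_i \le \mu_\ell - 1$ for all $i$. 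Second, \emph{end-of-phase optimality}: the stopping rule of a phase is precisely that no integral point of $P$ improves the current rounded objective, so the iterate $\tilde x$ at the end of phase $\ell$ maximizes $c^{\mu_\ell}$ over $P \cap \Z^n$ (this also covers the trivial first phase; we assume the input $x^0 \in P \cap \Z^n$).

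To bound the number of augmentations, I would fix a phase $\ell \ge 1$, let $y$ be the iterate at its start (equivalently, the iterate at the end of phase $\ell-1$, hence $c^{\mu_{\ell-1}}$-optimal over $P \cap \Z^n$) and $z$ be the iterate at its end. Since $c^{\mu_\ell}$ and all iterates are integral, every augmentation increases $c^{\mu_\ell}\tilde x$ by at least $1$, so the number of augmentations in phase $\ell$ is at most $c^{\mu_\ell}(z-y)$. Using the bit relation, $c^{\mu_\ell}(z-y) = 2\,c^{\mu_{\ell-1}}(z-y) + b(z-y) \le 0 + b z \le \onorm{b} \le n$: the first term is nonpositive by optimality of $y$ for $c^{\mu_{\ell-1}}$, we have $b(z-y) \le bz$ because $b \ge 0$ and $y \ge 0$, and $bz \le \ones z \le n$ because $z \in \binSet^n$ (as $P \subseteq \cube{n}$). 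Summing over the $K+1$ phases (phase $0$ contributing nothing) gives the bound $n \cdot (1 + \ceil{\log C})$. Optimality of the returned solution is then a byproduct: phase $K$ uses $\mu_K = 1$, so $c^{\mu_K} = c$ and the final iterate maximizes $c$ over $P \cap \Z^n$.

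For the gap bound, let $\tilde x$ be the iterate at the end of phase $\ell$ and $x^\star$ an optimal solution of the original problem. By end-of-phase optimality $c^{\mu_\ell}(x^\star - \tilde x) \le 0$, so with $c = \mu_\ell\, c^{\mu_\ell} + r$ and $0 \le r \le (\mu_\ell - 1)\ones$,
\[
c(x^\star-\tilde x) = \mu_\ell\, c^{\mu_\ell}(x^\star-\tilde x) + r(x^\star-\tilde x) \le r x^\star \le (\mu_\ell-1)\,\ones x^\star \le \mu_\ell \cdot \max\set{\ones x}{x\in P},
\]
where $r(x^\star - \tilde x) \le r x^\star$ uses $r \ge 0$ and $\tilde x \ge 0$, and the last step uses $\ones x^\star \le \max\set{\ones x}{x \in P}$ together with $\max\set{\ones x}{x \in P} \ge 0$. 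Since $\mu_\ell = 2^{K-\ell} = 2^{\ceil{\log C}-\ell}$, this is exactly the claimed bound.

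The one place that needs real care is the per-phase accounting in the second paragraph: one must state the bit relation correctly for powers of two that may exceed $1$ (so $c/\mu_\ell$ need not be integral), handle the degenerate first phase where the rounded objective vanishes, and be precise that ``optimal at the end of a phase'' means optimal over $P \cap \Z^n$ for the \emph{current} rounded objective --- this is exactly what lets the $2\,c^{\mu_{\ell-1}}(z-y)$ term be discarded. Everything else reduces to bookkeeping on floors and nonnegativity.
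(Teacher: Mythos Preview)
Your proof is correct and follows essentially the same route as the paper: the bit relation $c^{\mu_\ell} = 2c^{\mu_{\ell-1}} + b$ with $b \in \binSet^n$, end-of-phase optimality to kill the $2c^{\mu_{\ell-1}}(z-y)$ term, and the decomposition $c = \mu_\ell c^{\mu_\ell} + r$ for the gap bound are exactly the paper's ingredients. Your treatment of the first phase is in fact slightly sharper---you observe that $c^{\mu_0} = 0$ outright (so no augmentation occurs), whereas the paper only records $\mnorm{c^{\mu_0}} \le 1$ and bounds the first phase by $n$ like the others---but this does not change the structure of the argument.
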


\begin{proof}
  The algorithm applies at most \(1 + \ceil{\log C}\)
  scaling phases, i.e., \(\mu\) is halved at most \(1 + \ceil{\log C}\)
  times. We will show that within each phase, we compute at
  most \(n\) augmenting directions.

  Since \(c^\mu\) is integral, within each phase, we
  improve the previous solution by at least one with respect to \(c^\mu\).  Thus,
  it suffices to compare the initial solution of a given phase with the
  optimal solution of this phase. For this observe that for \(\mu =
  2^{\ceil{\log C}}\) we have \(\mnorm{c^\mu} \leq 1\), and hence the
  absolute gap between an optimal solution \(x^\mu\) for \(c^\mu\) and
  \(x^0\) is at most \(c^\mu(x^\mu - x^0) \leq n\), as \(P \subseteq
  \cube{n}\).

  Next, we will show that if it holds that we
  need at most \(n\) augmenting directions in phase \(\mu\), then we need
  at most \(n\) augmenting directions in phase \(\mu/2\).
  Observe that the objective for phase~\(\mu/2\) is \(c' \define
  \floor{c / (\mu/2)} = \floor{2c / \mu}\), which satisfies 
  \(c' = 2 c^\mu + \tilde c\) for some \(\tilde{c} \in \binSet^n\).
  If now \(x^\mu\) (resp.~\(x'\)) is an optimal solution with respect to \(c^\mu\)
  (resp.~\(c'\)) , we obtain
  \[
  c' (x' - x^\mu) = \underbrace{2 c^\mu (x' - x^\mu)}_{\leq 0} +
  \underbrace{\tilde{c} (x' - x^\mu)}_{\leq n} \leq n.
  \]

  It remains to establish the bound on the gap. Let \(x^\star\) be
  an optimal solution for the original objective function \(c\), and let
  \(x^\mu\) be the solution at the end of phase \(\mu = 2^{\ceil{\log C}
  - \ell}\), i.e., \(x^\mu\) is optimal for \(\floor{c / \mu}\).
  We can write \(c = \floor{c /\mu} \mu + r\) with \(r\in \{0,\dots,
  \mu-1\}^n\) and obtain
  \[
  c(x^\star - x^\mu) = (\floor{c /\mu} \mu + r) (x^\star - x^\mu) = \underbrace{\floor{c /\mu}
    \mu (x^\star - x^\mu)}_{\leq 0} + \underbrace {r (x^\star - x^\mu)}_{\leq
    \mu \max \set{\ones x}{x \in P}} \leq \mu \max \set{\ones x}{x \in P},
  \]
  so the result follows. 
\end{proof}

In particular, when each augmentation phase can be performed fast, then
Algorithm \ref{alg:bitScaling} is fast. We would like to conclude this
section with a few remarks:

\begin{rem}[Efficacy of bit scaling for objective functions with two values]
Note that the idea of bit scaling is
somewhat lost if \(c \in \{0, \gamma\}^n\) for some constant~\(\gamma\),
since in this case only two phases are performed. In particular, the power
of applying bit scaling can be reduced if the objective function~\(cx\) is
incorporated as a constraint \(z_0 = cx\) and \(z_0\) is maximized. Thus,
bit scaling depends heavily on the formulation of the problem. 
\end{rem}

\begin{rem}[General cost functions]
We confined our discussion here to \(c \in \Z^n\)
(and \(c \geq 0\)),
however it can be generalized to arbitrary \(c \in \Q^n\)
using a rounding scheme (via simultaneous Diophantine approximations)
for \(c\)
by \cite{frank1987application}. The same rounding scheme can be used
to ensure that a number of augmentations polynomial in the dimension
is always sufficient (see discussion after
Corollary~\ref{cor:bitscale:worstcase}).  

In the following, we restrict our discussion to \(c
\in \Z^n\) without loss of generality; our implementation works for 
arbitrary cost functions, for details, see Section~\ref{sec:implementation}.
\end{rem}

\begin{rem}[Bit scaling might revisit solutions]
  Bit scaling does not prevent feasible points from being
  revisited. This is because we change the objective function in a way
  that a non-optimal solution for a previous phase could become
  optimal for a later phase, i.e., the sequence of objective functions
  does not induce a unique ordering of the integral points. This
  undesirable behavior will be avoided by the method in the next
  section, and it is this revisiting (or cycling) phenomenon that is at the 
  core of our worst-case example in Section~\ref{sec:worst-case-examples}.
\end{rem}

\subsection{Geometric scaling}
\label{sec:geometric-scaling}

While the analysis of the bit scaling algorithm in Section~\ref{sec:bit-scaling} is geared
towards 0/1 polytopes, we will now present a more general scaling
framework that can be used for integer programs. The generalization to the mixed
integer case will be discussed in Section~\ref{sec:implementation}. The
algorithms in this section are essentially identical to those in
\cite{schulz2002complexity}, with minor modifications to use them in a
framework where the augmentation steps are computed by means of a
mixed-integer program. We will, however, provide a slightly improved analysis of
the geometric scaling algorithm, shaving off a \(\log n\)
factor in comparison to the analysis in
\cite{schulz2002complexity}. This, in particular, establishes that for 0/1 polytopes bit scaling and geometric
scaling have the same worst-case running time in terms of 
augmentation steps. However, as we will see in Section~\ref{sec:worst-case-examples},
there exist instances where geometric scaling requires significantly
fewer augmentations than bit scaling (see Corollary~\ref{cor:bitscale:worstcase}). 

Recall that we aim to solve \(\max \set{cx}{x \in P \cap \Z^n}\)
for an objective function \(c \in \Z^n\)
and a polytope
\(P \define \set{x \in \R^n}{Ax = b,\, l \leq x \leq u}\)
by a sequence of \emph{augmentation steps}; the requirement of
boundedness is important as the boundary will be used for a potential
function. In Section~\ref{sec:geom-scal-arbitr} we use a different
potential function, that does not require equality representations
whenever \(P \subseteq \cube{n}\).

Let us consider a feasible solution \(x \in P \cap \Z^n\). We will compute
an augmenting direction \(z \in \Z^n\) with \(x + z \in P\) and \(cz
> 0\). In the following, we will only consider feasible directions~\(z\), 
i.e., those with \(x + z \in P\), and we will simply call them
\emph{directions}. The (feasible) direction~\(z\) is \emph{exhaustive} for \(x\) if
\(x+2z \notin P\). Note that an exhaustive direction is always nonzero,
and by integrality, an integer feasible direction is exhaustive for \(P\) if
and only if it is exhaustive for the integral hull \(P_I\).

The following scaling algorithm can be understood as
an analogue of interior point methods for integer programs: the objective
function is augmented by a potential function, and the search for
augmenting directions is very similar to the Newton directions obtained
from the derivatives of the classical barrier function for linear programs
(see, e.g., \cite[Section~4]{bental2001lectures}).

\begin{defn}[Potential function]\label{def:potential}
  Let \(P \define \set{x \in \R^n}{Ax = b,\, l \leq x \leq u}\) be a
  polytope. Then \(\rho: \R^n \times \R^n
  \rightarrow \R_+ \cup \{\infty\}\) is a \emph{potential function for \(P\)} if for all
  integer feasible points \(x \in P \cap \Z^n\) and feasible directions
  \(z\) for \(x\):
  \begin{enumerate}
  \item\label{item:1}\(\rho(x, z) \in \order{\poly(n)}\),
  \item\label{item:2} \(\rho(x,z) = \Omega(1/\poly(n))\) whenever \(z\) is
    exhaustive for \(x\), and
 \item \label{item:3}\(\rho(x,\alpha \cdot z) = \alpha\cdot \rho(x, z)\) for all \(\alpha \geq 0\).
  \end{enumerate}
\end{defn}

There are various appropriate potential functions. We now present one of
the potential functions used in \cite{schulz2002complexity}. For this, we
use the standard notation to denote the positive and negative part of \(z\)
by \(z^+\in \Z^n_+\) and \(z^-\in \Z^n_+\), respectively, so that \(z = z^+ - z^-\) and
\(z^+z^- = 0\). We also use the standard convention \(0 \cdot \infty = 0\)
throughout the article.

\begin{lemma}[\cite{schulz2002complexity}]\label{lem:standardPotential}
  Let \(P = \set{x \in \R^n}{Ax = b,\, l \leq x \leq u}\) be a polytope. Then 
  \[
  \rho(x,z) \define p(x) z^+ + n(x) z^-,
  \]
  with 
  \[
  p(x)_j =
  \begin{cases*}
    \frac{1}{u_j - x_j}, & \text{if } \(x_j < u_j\)\\
    \infty, & \text{otherwise}
  \end{cases*}
  \qquad \text{and} \qquad
  n(x)_j =
  \begin{cases*}
    \frac{1}{x_j - l_j}, & \text{if } \(x_j > l_j\)\\
    \infty, & \text{otherwise}\\
  \end{cases*}
  \]
  is a potential function such that
  \begin{enumerate}
  \item \(\rho(x, z) \leq n\) for all integer feasible points
    \(x\) and feasible directions \(z\);
  \item \(\rho(x,z) > \tfrac{1}{2}\) whenever \(z\) is
    exhaustive for \(x\).
  \end{enumerate}
\end{lemma}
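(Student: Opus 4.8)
The plan is to verify the three axioms of Definition~\ref{def:potential} with \(\poly(n) = n\), in the strengthened form claimed, treating them in the order: positive homogeneity, upper bound, lower bound. Axiom~\ref{item:3} is immediate: for \(\alpha \geq 0\) we have \((\alpha z)^+ = \alpha z^+\) and \((\alpha z)^- = \alpha z^-\), so \(\rho(x,\alpha z) = p(x)(\alpha z^+) + n(x)(\alpha z^-) = \alpha\,\rho(x,z)\), where we use the convention \(0\cdot\infty = 0\) at any coordinate where \(p(x)_j\) or \(n(x)_j\) is infinite.

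For the upper bound \(\rho(x,z)\leq n\) (which yields axiom~\ref{item:1}), I would fix an integer feasible \(x\) and a feasible direction \(z\), so that \(l\leq x\leq u\) and \(l\leq x+z\leq u\), and argue coordinatewise. Since \(z^+_j z^-_j = 0\), at most one of \(p(x)_j z^+_j\) and \(n(x)_j z^-_j\) is nonzero for each \(j\). If \(z_j > 0\), then \(x_j < x_j + z_j \leq u_j\), so \(p(x)_j = 1/(u_j - x_j)\) is finite and \(p(x)_j z^+_j = z_j/(u_j - x_j) \leq 1\) because \(z_j \leq u_j - x_j\); if instead \(x_j = u_j\), then \(z_j \leq 0\) and \(z^+_j = 0\), so the term is \(0\) by the convention. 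The estimate \(n(x)_j z^-_j \leq 1\) is entirely symmetric. Summing the per-coordinate bound over the \(n\) coordinates gives \(\rho(x,z)\leq n\).

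For the lower bound (axiom~\ref{item:2} with constant \(\tfrac12\)), suppose \(z\) is exhaustive for \(x\), i.e.\ \(x, x+z \in P\) but \(x + 2z \notin P\). The \emph{key observation} is that \(Az = A(x+z) - Ax = b - b = 0\), hence \(A(x+2z) = b\), so the failure of \(x+2z\) to lie in \(P\) must stem from a violated variable bound. Suppose \(x_j + 2z_j > u_j\) for some \(j\) (the case \(x_j + 2z_j < l_j\) is symmetric, with \(n(x)_j\) in place of \(p(x)_j\)). Combined with \(x_j + z_j \leq u_j\), this forces \(z_j > 0\) and \(x_j < u_j\), so \(p(x)_j\) is finite and positive, and \(2z_j > u_j - x_j\) gives \(p(x)_j z^+_j = z_j/(u_j - x_j) > \tfrac12\); since every summand of \(\rho(x,z)\) is nonnegative, \(\rho(x,z) > \tfrac12\).

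Taking these three items together shows \(\rho\) is a potential function satisfying the two stated bounds. I do not anticipate a genuine obstacle here; the only points requiring a little care are respecting the \(0\cdot\infty\) convention at coordinates that sit on a bound of \(P\), and recognizing in the exhaustive case that \(Az = 0\) forces the obstruction at \(x+2z\) to be a bound constraint rather than one of the equalities \(Ax = b\).
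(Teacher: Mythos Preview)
Your proof is correct and follows essentially the same approach as the paper's: a coordinatewise bound for \(\rho(x,z)\leq n\), a single violated bound constraint for the exhaustive lower bound, and the obvious homogeneity check. The one place you are slightly more careful than the paper is in explicitly noting \(Az=0\) so that exhaustiveness must come from a violated variable bound rather than an equality; the paper's proof asserts this directly without spelling out the reason.
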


\begin{proof}
  Let \(z = z^+ - z^-\) be an integer feasible direction and let \(x\) be
  integer feasible for \(P\). We will show that for each \(j \in
  [n]\) we have \(p(x)_j\, z^+_j + n(x)_j\, z^-_j \leq 1\). Since
  \((p(x)_j\, z^+_j) \cdot (n(x)_j\, z^-_j) = 0\) by the definition
  of the positive and negative part, it suffices to prove that
  \(p(x)_j\, z^+_j \leq 1\) and \(n(x)_j\, z^-_j \leq 1\). We consider
  the term \(n(x)_j\, z^-_j\); the proof is analogous for \(p(x)_j\,
  z^+_j\).
  Observe that whenever \(x_j = l_j\) then \(z^-_j = 0\), and
  hence \(n(x)_j\, z^-_j = 0\) in this case. Thus, suppose that \(x_j >
  l_j\). Then
  \[
  n(x)_j\, z^-_j = \frac{z^-_j}{x_j - l_j} \leq 1,
  \]
  because \(z\) is a (feasible) direction. 

  Now suppose that \(z\) is exhaustive for \(x\), i.e., \(x+z \in
  P\),
  but \(x + 2z \notin P\). By definition, \(\rho(x,z) \geq
  0\). Moreover, since \(z\) is exhaustive, there exists \(j \in [n]\)
  with either \(x_j + 2z_j > u_j\), i.e., \(z^+_j > (u_j - x_j)/2\) or
  \(x_j + 2z_j < l_j\), i.e., \(z^-_j > (x_j-l_j)/2 \). Hence,
  \(p(x)_j\, z^+_j > \tfrac{1}{2}\) in the former case or \(n(x)_j\, z^-_j > \tfrac{1}{2}\) in the
  latter case. 

  Clearly, \(\rho\) as defined above is positively homogeneous in the direction~\(z\),
  i.e., Property~\ref{item:3} is satisfied. 
\end{proof}

Next, we will show that if we can compute a direction that
maximizes the ratio of the objective function value over the potential, we
can solve the maximization problem \(\max \set{cx}{x \in P \cap Z^n}\) by a
number of augmentations polynomial in \(n\) and \(\log C\).  This is achieved by the
maximum-ratio augmentation (MRA) algorithm given in
Algorithm~\ref{alg:mra}, see \cite[Algorithm~I]{schulz2002complexity}. Throughout, let \(C
\define \mnorm{c}\), \(U\define \max_{i\in[n]} u_i\), and \(L \define \min_{i\in[n]} l_i\).

\begin{algorithm}[tb]
  \caption{\label{alg:mra}Maximum-Ratio Augmentation (MRA)}  
  \textbf{Input:} Integer feasible solution \(x^0\), potential function \(\rho\)\\
  \textbf{Output: } Optimal solution for \(\max \set{cx}{x \in P \cap \Z^n}\)
  \begin{algorithmic}
    \State \(\tilde{x} \leftarrow x^0\)
    \Repeat
    \State \textbf{compute} \(x \in \argmax 
    \set{\frac{c(x - \tilde{x})}{\rho(\tilde{x}, x - \tilde{x})}}{c (x - \tilde{x}) > 0,\; x \in P \cap \Z^n}\)
    \Comment{MRA direction}
    \If{there is no feasible solution}
    \State \Return \(\tilde{x}\) \Comment{solution is optimal}
    \Else
    \State \textbf{pick} \(\alpha \in \Z_+\) with \(\alpha \geq 1\) so that \(z = \alpha(x - \tilde{x})\)
    is an exhaustive direction
    \State \(\tilde{x} \leftarrow x + \alpha(x - \tilde{x})\) \Comment{update solution and repeat}
    \EndIf
    \Until{\(\tilde{x}\) is optimal}
  \end{algorithmic}
\end{algorithm}

Observe that we can obtain an exhaustive direction in
Algorithm~\ref{alg:mra} from a maximum-ratio direction \(z = x - \tilde{x}\) simply by scaling up. The scaled direction
will remain an optimal solution to
\begin{equation}\label{eq:MRAobj}
\max \set{\frac{c(x - \tilde{x})}{\rho(\tilde{x}, x - \tilde{x})}}{c (\tilde{x} - x) > 0,\; x \in P \cap \Z^n}
\end{equation}
by Property~\ref{item:3} of Definition~\ref{item:3}, since
\(\frac{c (\alpha z) }{\rho(x,\alpha z)}=\frac{c z }{\rho(x,z)}\).
For a discussion of how to solve~\eqref{eq:MRAobj}, see Section~\ref{sec:Implementation:MRA}.

\begin{rem}
  While a feature of the potential function in
  Lemma~\ref{lem:standardPotential} is the
  guarantee that we do not leave the feasible region, we will not
  require this from a potential function in general (see
  Definition~\ref{def:potential}). In fact, feasibility will be
  ensured by~\eqref{eq:MRAobj}, so that we potentially could
  use a more general class of potential functions, gaining some extra flexibility. 
\end{rem}

\begin{rem}[Equality vs.\ inequality representation]
  It is important to observe that the potential function in
  Lemma~\ref{lem:standardPotential} is defined with respect to the
  equality representation of \(P\).
  In particular, we assign a potential to possible \emph{slack
    variables} of inequalities, which ensures that we move
  \lq{}inside\rq{} the feasible region. This is the reason why we
  require \(P\) to be bounded via \(l \leq x \leq u \); see the discussion
  in \cite[Section 4]{schulz2002complexity}.

In the case of polytopes \(P \subseteq \cube{n}\) we will relax this
in Section~\ref{sec:geom-scal-arbitr} via a different potential
function and provide essentially identical performance guarantees
without adding slack variables. 
\end{rem}

In the remainder of this section, we
will prove the guarantees for the potential function given in
Lemma~\ref{lem:standardPotential}. However, we obtain polynomiality for
any potential function. The results readily carry over by plugging in
the performance values of Properties~\ref{item:1} and \ref{item:2} of
the considered potential function. We will first provide the classical
analysis from \cite{schulz2002complexity} 
for the MRA algorithm as it contains the main potential
function argument that is used throughout the remainder of the article.

\begin{theorem}[Optimization through maximum-ratio augmentation]\label{thm:mraAlgo}
  Consider the polytope \(P = \set{x \in \R^n}{Ax = b,\, l \leq x \leq
    u}\), let \(\rho\) be the potential function from
  Lemma~\ref{lem:standardPotential}, and let \(x^0 \in P \cap \Z^n\).
  Then Algorithm~\ref{alg:mra} solves the optimization
  problem \(\max \set{cx}{x \in P \cap \Z^n}\)
  with at most \(O (n \log( n C (U-L)))\) computations of an MRA direction.
\end{theorem}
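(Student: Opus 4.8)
The plan is to track a potential argument on the objective gap, combined with the homogeneity and boundedness properties of $\rho$. Let $x^\star$ be an optimal solution and write $g_k \define c(x^\star - \tilde x^k)$ for the objective gap after the $k$-th augmentation, starting from $g_0 = c(x^\star - x^0)$. The key is to show that a single MRA step shrinks the gap by a multiplicative factor $1 - \Omega(1/n)$, i.e. $g_{k+1} \le (1 - \tfrac{1}{2n}) g_k$ or similar. Once this is established, since the gap starts at most $g_0 \le n C (U-L)$ (using $\mnorm{c} \le C$, $\mnorm{x^\star - x^0} \le U - L$, and $n$ coordinates) and the process must stop once $g_k < 1$ (as all quantities involved are integral, a positive integral gap cannot be closed without an augmenting direction still existing), we get that $O(n \log(n C (U-L)))$ iterations suffice by the standard $\log$ of the ratio of initial to final gap.

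First I would argue that the direction $z^\star \define x^\star - \tilde x^k$ is a feasible augmenting direction for $\tilde x^k$ (assuming the gap is still positive), so it is a candidate in the MRA optimization problem. By Property~\ref{item:1} of the potential (here $\rho(\tilde x^k, z^\star) \le n$ from Lemma~\ref{lem:standardPotential}), the ratio achieved by $z^\star$ is at least $\frac{c z^\star}{n} = \frac{g_k}{n}$. Hence the MRA direction $z = x - \tilde x^k$ chosen by the algorithm achieves ratio at least $g_k / n$, and after scaling to an exhaustive direction $\alpha z$ — which preserves the ratio by Property~\ref{item:3} — we have $\frac{c(\alpha z)}{\rho(\tilde x^k, \alpha z)} \ge g_k/n$. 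Since $\alpha z$ is exhaustive, Property~\ref{item:2} gives $\rho(\tilde x^k, \alpha z) > \tfrac12$, so the objective improvement is $c(\alpha z) > \tfrac{1}{2n} g_k$. Therefore $g_{k+1} = g_k - c(\alpha z) < (1 - \tfrac{1}{2n}) g_k$, which is the desired contraction.

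To finish, I would iterate: after $k$ augmentations, $g_k < (1 - \tfrac{1}{2n})^k g_0 \le e^{-k/(2n)} \cdot n C (U-L)$. The algorithm terminates with the optimal solution when no augmenting direction exists; I would argue that as long as $\tilde x^k$ is not optimal, $c(x^\star - \tilde x^k) \ge 1$ since $c$ is integral and $\tilde x^k, x^\star$ are integral (this needs a small remark that $x^\star$ can be taken integral, and that the algorithm maintains integral iterates — which it does, since $\tilde x$ is updated by integral exhaustive directions). So the process stops once $g_k < 1$, which happens as soon as $e^{-k/(2n)} n C(U-L) \le 1$, i.e. $k \ge 2n \ln(nC(U-L)) = O(n \log(nC(U-L)))$.

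The main obstacle I anticipate is being careful about two points: first, that the bound $g_0 \le nC(U-L)$ is legitimate (it follows from Hölder: $c(x^\star - x^0) \le \onorm{x^\star - x^0}\mnorm{c} \le n(U-L)C$), and second, the integrality argument that justifies stopping at gap $<1$ — one must ensure the iterates stay integer feasible and that $x^\star$ may be chosen in $P \cap \Z^n$, so that $c(x^\star - \tilde x^k) \in \Z$ and hence being nonzero forces it to be at least $1$. Neither is deep, but both are needed for the clean $O(n \log(nC(U-L)))$ bound rather than a bound depending on the magnitude of fractional gaps. I would also note that the argument is generic: replacing the constants $n$ and $\tfrac12$ in Properties~\ref{item:1}–\ref{item:2} by $\poly(n)$ bounds changes the iteration count only by a $\poly(n)$ factor inside the logarithm-free part and a constant inside the log, as the remark following the statement already anticipates.
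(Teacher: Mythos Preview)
Your proposal is correct and follows essentially the same route as the paper: both arguments compare the MRA direction against the direction $x^\star - \tilde x$ to the optimum, use $\rho(\tilde x, x^\star - \tilde x) \le n$ together with $\rho(\tilde x, z) > \tfrac12$ for the exhaustive step to obtain the contraction $g_{k+1} \le (1 - \tfrac{1}{2n})g_k$, bound the initial gap by $nC(U-L)$, and terminate once the integral gap drops below $1$. Your write-up is slightly more explicit about the role of homogeneity (Property~\ref{item:3}) in preserving the ratio under the exhaustive rescaling and about the integrality of the iterates, but these are exactly the details the paper's proof relies on implicitly.
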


\begin{proof}
   Let \(\tilde{x}\) be the solution given at the beginning of an iteration. 
   Suppose an optimal solution \(x\) for \eqref{eq:MRAobj} is found, and
   let \(z = x - \tilde{x}\) be the exhaustive
   direction. Let \(z^\star \define x^\star - \tilde{x}\) be the
   direction from~\(\tilde{x}\) to the optimal solution~\(x^\star\) of the original problem. We have
   \(\frac{c z}{\rho(\tilde{x}, z)} \geq \frac{c z^\star}{\rho(\tilde{x}, z^\star)}\)
   by the optimality of \(x\). It follows that
   \[
   c z \geq \frac{{\rho(\tilde{x}, z)}}{\rho(\tilde{x}, z^\star)} c z^\star \geq
   \frac{1}{2n} c z^\star,
   \]
   since \(\rho(\tilde{x}, z^\star) \leq n\) and \(\rho(\tilde{x}, z) \geq \frac{1}{2}\) by
   Lemma~\ref{lem:standardPotential}. Thus, the computed direction
   recovers a \(\frac{1}{2n}\) fraction of the objective value of the optimal direction
   \(z^\star\). Moreover, the (absolute) gap \(c(x^\star - x^0)\) between an
   optimal solution \(x^\star\) and the initial solution \(x^0\) is at most
   \(K \define n\, C\, (U-L)\).
   Thus, after \(\ell\) rounds, the remaining gap is at
   most \((1-\frac{1}{2n})^\ell K\), and we want to estimate the number of iterations for which
   \((1-\frac{1}{2n})^\ell K \geq 1\) holds; once we drop below \(1\),
   we have reached an integer optimal solution and we are done. Taking the logarithm, we obtain
   \begin{align*}
     \ell \log (1 - \tfrac{1}{2n}) + \log K \geq 0,
   \end{align*}
   which can be bounded using \(\log(1-\frac{1}{2n}) \leq -
   \frac{1}{2n}\). We obtain \(\ell = O (n \log K)\), and the result
   follows.
 \end{proof}

Unfortunately, Algorithm~\ref{alg:mra} computes an exact MRA
direction at each iteration,
which can be expensive and requires maximizing a ratio. We will now consider a scaling algorithm which
approximates the maximum-ratio augmentation direction by a factor of
\(2\) and hence has the same asymptotic running time. 

The main idea of the \emph{geometric scaling} algorithm (see
Algorithm~\ref{alg:mraScaling}) is to only approximately
compute an MRA direction, see \cite[Algorithm~II]{schulz2002complexity}. For this observe that testing whether \(\frac{c 
  z }{\rho(\tilde{x},z)} \geq \mu\) is equivalent to testing \(cz - \mu \cdot
\rho(\tilde{x},z) \geq 0\). This is precisely the standard methodology of the
barrier method to progressively tighten the complementary slackness
residual. 

\begin{algorithm}[tb]
  \caption{Geometric scaling}
  \label{alg:mraScaling}
  \textbf{Input: } Integer feasible solution \(x^0\), potential function \(\rho\)\\
  \textbf{Output: } Optimal solution for \(\max \set{cx}{x \in P \cap \Z^n}\)
  \begin{algorithmic}
    \State \(\mu \leftarrow 2C(U-L)\), \(\tilde{x} \leftarrow x^0\)
    \Repeat
    \State \textbf{compute} \(x \in P\) integral with \(c(x - \tilde{x}) - \mu \cdot
    \rho(\tilde{x}, x - \tilde{x}) > 0\) \Comment{approx. MRA direction} 
    \If{there is no feasible solution}
    \State \(\mu \leftarrow \mu/2\)
    \Else
    \State \textbf{pick} \(\alpha \in \Z_+\) with \(\alpha \geq 1\) so that \(z = \alpha(x - \tilde{x})\)
    is an exhaustive direction
    \State \(\tilde{x} \leftarrow \tilde{x} + \alpha(x - \tilde{x})\) \Comment{update solution and repeat}
    \EndIf
    \Until{\(\mu < 1/n\)}
    \State \Return \(\tilde{x}\)\Comment{return optimal solution}
  \end{algorithmic}
\end{algorithm}

Again, we can scale the direction to be exhaustive due to the homogeneity of the potential function. 
In order to establish a performance bound for Algorithm~\ref{alg:mraScaling} we need the following
simple observation.

\begin{observation}\label{obs:decreasingPot}
  Let \(\tilde{x}\) be the last solution in the scaling phase for~\(\mu\). Then
  for any integer feasible solution~\(x\), \(\tilde{x}\) satisfies
  \[
  \frac{c(x - \tilde{x})}{\rho(\tilde{x}, x - \tilde{x})} \leq \mu,
  \]
  i.e., whenever we enter a new scaling phase we have \(c(x - \tilde{x})
  \leq \mu\, n\) for the potential function in
  Lemma~\ref{lem:standardPotential}, which gives an upper bound on the
  remaining gap.
\end{observation}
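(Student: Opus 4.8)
The plan is to read the claimed inequality directly off the termination condition of a scaling phase, so essentially no computation is needed. When the phase for the current value of \(\mu\) ends in Algorithm~\ref{alg:mraScaling}, the \textbf{compute} step has just failed: there is no integral \(x \in P\) with \(c(x - \tilde{x}) - \mu \cdot \rho(\tilde{x}, x - \tilde{x}) > 0\). Since \(\tilde{x}\) is precisely the iterate at that moment, i.e.\ the last solution of the phase, this means that \emph{every} integer feasible \(x \in P\) satisfies \(c(x - \tilde{x}) - \mu \cdot \rho(\tilde{x}, x - \tilde{x}) \leq 0\), that is, \(c(x - \tilde{x}) \leq \mu\,\rho(\tilde{x}, x - \tilde{x})\).

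From here the first displayed inequality of Observation~\ref{obs:decreasingPot} follows by dividing by \(\rho(\tilde{x}, x - \tilde{x})\): if \(c(x - \tilde{x}) > 0\) then \(\rho(\tilde{x}, x - \tilde{x}) > 0\) (otherwise the inequality of the previous paragraph would force \(c(x-\tilde{x}) \le 0\)), so division is legitimate and yields \(\frac{c(x - \tilde{x})}{\rho(\tilde{x}, x - \tilde{x})} \leq \mu\); if \(c(x - \tilde{x}) \leq 0\) the ratio is nonpositive and hence \(\leq \mu\) trivially, and the degenerate case \(x = \tilde{x}\) is vacuous. For the second assertion, combine \(c(x - \tilde{x}) \leq \mu\,\rho(\tilde{x}, x - \tilde{x})\) with the bound \(\rho(\tilde{x}, x - \tilde{x}) \leq n\) from Lemma~\ref{lem:standardPotential}\ref{item:1}, obtaining \(c(x - \tilde{x}) \leq \mu n\) for every integer feasible \(x\); applying this to an optimal solution \(x^\star\) shows that \(\mu n\) bounds the remaining optimality gap at the point where the next scaling phase begins.

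The only subtlety worth spelling out — and it is minor — is what ``the \textbf{compute} step failed'' means exactly: the feasibility test in Algorithm~\ref{alg:mraScaling} quantifies over \emph{all} integer points of \(P\), and by the homogeneity Property~\ref{item:3} of \(\rho\) the existence of some improving \(x\) is equivalent to the existence of an exhaustive improving direction, so no improving direction of any scaling was overlooked when the phase terminated. Beyond checking this, the statement is a direct restatement of the stopping rule together with the uniform bound on \(\rho\), and requires no real obstacle to overcome.
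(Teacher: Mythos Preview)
Your proposal is correct and matches the paper's intent exactly: the paper states this as an observation without proof, treating it as an immediate consequence of the stopping rule of Algorithm~\ref{alg:mraScaling} combined with the bound \(\rho \leq n\) from Lemma~\ref{lem:standardPotential}. Your write-up simply spells out that implicit reasoning, including the careful handling of the division by \(\rho\), and there is nothing to add or correct.
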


Moreover let us point out the following property:

\begin{observation}[Geometric scaling never revisits a point]\label{obs:GSnoRevisit}
  A feasible solution~\(x\) in Algorithm~\ref{alg:mraScaling} satisfies
  \(c(x - \tilde{x}) - \mu \cdot \rho(\tilde{x}, x - \tilde{x}) > 0\) or equivalently,
  \[
  cx > c\tilde{x} + \mu \cdot \underbrace{\rho(\tilde{x},x-\tilde{x})}_{\geq 0}.
  \]
  Thus, geometric scaling produces solutions with strictly increasing
  cost with respect to the original objective function~\(c\) and cannot revisit points.
\end{observation}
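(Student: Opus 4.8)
The plan is to read off the claim almost directly from the augmentation criterion used inside Algorithm~\ref{alg:mraScaling}, together with two elementary sign facts. First I would record that the scaling parameter satisfies \(\mu > 0\) throughout the run: it is initialized to \(2C(U-L) > 0\) and is only ever halved, so it stays strictly positive until the loop terminates at \(\mu < 1/n\). Second, by Definition~\ref{def:potential} the potential \(\rho\) takes values in \(\R_+ \cup \{\infty\}\), so \(\rho(\tilde{x}, x-\tilde{x}) \geq 0\) for every feasible direction (using the convention \(0 \cdot \infty = 0\)); for the concrete potential of Lemma~\ref{lem:standardPotential} this is of course immediate.

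With these in hand, the core step is the chain of inequalities already displayed in the statement: whenever the algorithm computes a feasible \(x\) in a phase with parameter \(\mu\), it satisfies \(c(x - \tilde{x}) - \mu \cdot \rho(\tilde{x}, x - \tilde{x}) > 0\), hence
\[
c(x - \tilde{x}) > \mu \cdot \rho(\tilde{x}, x - \tilde{x}) \geq 0,
\]
so in particular \(c(x-\tilde{x}) > 0\). I would then account for the exhaustive-scaling step: the new iterate is \(\tilde{x}' = \tilde{x} + \alpha(x - \tilde{x})\) for some integer \(\alpha \geq 1\), so \(c\tilde{x}' - c\tilde{x} = \alpha\, c(x - \tilde{x}) > 0\). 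Thus every iteration that changes the current solution strictly increases its objective value \(c\tilde{x}\); the only other event, ``no feasible solution,'' leaves \(\tilde{x}\) unchanged and merely halves \(\mu\).

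To finish, I would let \(x^0, x^1, x^2, \dots\) denote the successive (distinct updates of the) incumbent produced by Algorithm~\ref{alg:mraScaling}. The previous paragraph gives \(cx^0 < cx^1 < cx^2 < \cdots\), i.e., the sequence of objective values is strictly increasing; a strictly increasing sequence has pairwise distinct terms, so no feasible solution is ever revisited. This also records the companion statement that geometric scaling produces solutions of strictly increasing cost with respect to the original objective \(c\).

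I do not expect a genuine obstacle here: the only points requiring a moment of care are (i) noting \(\mu>0\) on every iteration rather than merely at the start, (ii) invoking nonnegativity of \(\rho\) from Definition~\ref{def:potential} (not just from the specific potential), and (iii) not forgetting the scaling factor \(\alpha\ge 1\) in the update, so that the strict increase survives exhaustive scaling.
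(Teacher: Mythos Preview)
Your proposal is correct and follows exactly the same reasoning as the paper, which treats this as a self-evident observation rather than giving a separate proof: the strict inequality in the augmentation criterion together with \(\mu>0\) and \(\rho\geq 0\) immediately yields \(cx>c\tilde{x}\). Your additional care about \(\mu\) remaining positive and about the exhaustive-scaling factor \(\alpha\geq 1\) preserving the strict increase is welcome but not something the paper spells out.
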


The crucial advantage of the geometric scaling algorithm is that it
uses the objective function to guide the search and hence we (potentially) obtain a speed-up over standard augmentation. 
For illustration purposes, we depict the behavior of the geometric
scaling algorithm in Figure~\ref{fig:geoFig}.

\begin{figure}
  \centering
  \IfFileExists{graphics/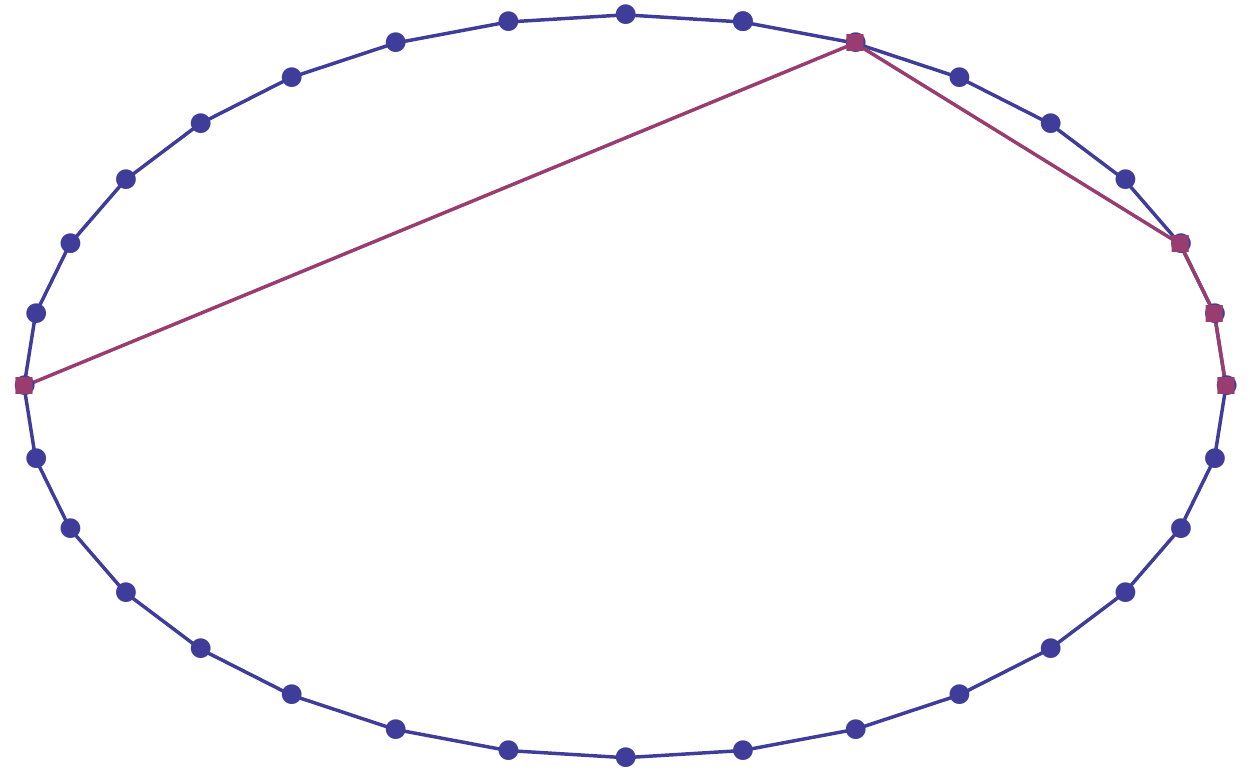}{
     \includegraphics[scale=0.8]{graphics/gsExample_n10.pdf}
  }
  {
  \IfFileExists{gsExample_n10.pdf}{
     \includegraphics[scale=0.8]{gsExample_n10.pdf}
  }{}
  }
  \caption{Illustration of the behavior of geometric scaling. The
    picture shows a polytope in \(\R^2\).
    Our initial point is the leftmost point and we maximize
    \(c=(1,0)\)
    (i.e., the maximum is the rightmost point). A worst-case
    augmentation oracle might give the point with smallest
    improvement, which is always an adjacent vertex or jump between
    bottom and top. Standard
    augmentation with such a malicious worst-case oracle would now
    force us to visit either each point on the upper or lower
    path. Geometric scaling constructs shortcuts by controlling the
    objective function, leading to significant speed-ups.}
  \label{fig:geoFig}
\end{figure}

As with bit scaling and MRA, Algorithm~\ref{alg:mraScaling} also
requires only polynomially many augmentations with respect to
the encoding length. First we bound the number of augmentations required
per scaling phase.

\begin{lemma}[\cite{schulz2002complexity}]\label{lem:updatesPerPhase}
  Let \(P = \set{x \in \R^n}{Ax = b,\, l \leq x \leq u}\),
  let \(\rho\) be the potential function from
  Lemma~\ref{lem:standardPotential}, and let \(x^0 \in P \cap \Z^n\) be an integer feasible
  solution. Then Algorithm~\ref{alg:mraScaling} computes at most \(4n\) approximate MRA directions between successive updates of \(\mu\).
\end{lemma}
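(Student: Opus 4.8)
The plan is to show that within a single scaling phase the original objective value \(c\tilde{x}\) increases by more than a fixed quantity at every augmentation, while the total increase available in the phase is bounded, and then to divide.

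First I would fix the scaling phase with parameter \(\mu\), let \(\tilde{x}\) be the iterate at the start of this phase, and let \(x^\star\) be an optimal solution of \(\max\set{cx}{x\in P\cap\Z^n}\). The first claim is that the gap \(c(x^\star-\tilde{x})\) is at most \(2\mu n\). If this is not the first phase, then \(\tilde{x}\) is the last iterate of the phase with parameter \(2\mu\), so Observation~\ref{obs:decreasingPot} yields \(c(x^\star-\tilde{x})\le 2\mu\,\rho(\tilde{x},x^\star-\tilde{x})\le 2\mu n\), using \(\rho(\tilde{x},\cdot)\le n\) from Lemma~\ref{lem:standardPotential}. If it is the first phase, then \(\mu=2C(U-L)\) and the initial gap bound \(c(x^\star-x^0)\le n\,C\,(U-L)=\tfrac{\mu}{2}\,n\le 2\mu n\) (the same quantity \(K\) used in the proof of Theorem~\ref{thm:mraAlgo}) applies.

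Next I would show that each augmentation within the phase increases \(c\tilde{x}\) by more than \(\mu/2\). An augmentation picks an integer feasible \(x\) with \(c(x-\tilde{x})-\mu\,\rho(\tilde{x},x-\tilde{x})>0\) and then sets the new iterate to \(\tilde{x}+z\), where \(z=\alpha(x-\tilde{x})\) with \(\alpha\in\Z_+\), \(\alpha\ge 1\), chosen so that \(z\) is exhaustive. By positive homogeneity (Property~\ref{item:3} of Definition~\ref{def:potential}) we get \(cz-\mu\,\rho(\tilde{x},z)=\alpha\bigl(c(x-\tilde{x})-\mu\,\rho(\tilde{x},x-\tilde{x})\bigr)>0\), hence \(cz>\mu\,\rho(\tilde{x},z)\); and since \(z\) is exhaustive, \(\rho(\tilde{x},z)>\tfrac12\) by Lemma~\ref{lem:standardPotential}, so \(cz>\mu/2\).

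Finally I would combine the two: the objective value of every feasible iterate produced during the phase is at most \(cx^\star\) by optimality of \(x^\star\), it starts the phase at a value \(c\tilde{x}\) with \(cx^\star-c\tilde{x}\le 2\mu n\), and each of the \(k\) augmentations strictly adds more than \(\mu/2\). Hence \(k\cdot\tfrac{\mu}{2}<2\mu n\), i.e. \(k<4n\), which gives the claimed bound of at most \(4n\) approximate MRA directions between successive updates of \(\mu\). I do not expect a real obstacle here; the only points requiring care are handling the first-phase edge case when invoking the gap bound, and making sure the lower bound \(\rho>\tfrac12\) is applied to the scaled, exhaustive direction \(z\) rather than to the possibly tiny unscaled direction \(x-\tilde{x}\).
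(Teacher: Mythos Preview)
Your proof is correct and follows essentially the same approach as the paper: bound the initial gap of the phase by \(2\mu n\) via Observation~\ref{obs:decreasingPot} (and \(\rho\le n\)), show each augmentation gains more than \(\mu/2\) via exhaustiveness and \(\rho>\tfrac12\), and divide. You are slightly more careful than the paper in that you handle the first-phase edge case explicitly, whereas the paper's proof silently relies on the initial choice \(\mu=2C(U-L)\) being large enough.
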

\begin{proof}
  Let \(y^0, y^1,  \dotsc\) be the points in \(P\) visited by the algorithm during the scaling phase for a given~\(\mu\). In particular,
  \(y^0\) is the current solution after the last update of \(\mu\). By Observation~\ref{obs:decreasingPot}, we have
  \[
  \frac{c(x^\star - y^0)}{\rho(y^0,x^\star - y^0)} \leq 2\mu,
  \]
  where \(x^\star\) is an integral optimal solution for the original problem. Now, consider any two
  consecutive iterates \(y^i\) and \(y^{i+1}\). By definition of
  Algorithm~\ref{alg:mraScaling}, the \(c(y^{i+1} - y^i) - \mu \cdot
  \rho(y^i, y^{i+1} - y^i) > 0\) holds. Moreover, as the direction
  \(y^{i+1} - y^i\) is exhaustive, using Lemma~\ref{lem:standardPotential}, we have 
  \begin{align*}
    c(y^{i+1}-y^i) & > \mu \cdot \rho(y^{i},y^{i+1}-y^i)  \geq \frac{\mu}{2}
    \geq \frac{1}{4} \frac{c(x^\star-y^0)}{\rho(y^0,x^\star-y^0)} \geq
    \frac{1}{4n} c(x^\star-y^0),
  \end{align*}
  hence we compute at most \(4n\) approximate directions in each scaling phase.
\end{proof}

It is interesting to observe that in
Lemma~\ref{lem:updatesPerPhase}, in each scaling phase, we recover at
least a~\(\tfrac{1}{4n}\) fraction of the improvement of the optimal direction \(x^\star-y^0\)
from the feasible solution \(y^0\) at the beginning of the phase to the
optimal solution \(x^\star\). This is in contrast to Theorem~\ref{thm:mraAlgo}, where the guaranteed
improvement of \(\tfrac{1}{2n}\) is with respect to two consecutive iterates 
only, i.e., we only guarantee to recover an \(\tfrac{1}{2n}\) fraction of
the improvement of \(x^\star-y^{i-1}\), if we are in iteration \(i\), which is potentially
smaller than the one from direction  \(x^\star-y^0\).

We will now establish a bound on the number of required approximate MRA
directions, which slightly improves the bound in
\cite{schulz2002complexity} by a \(\log n\) factor. The key insight is
that we can combine 
Observation~\ref{obs:GSnoRevisit} with
Observation~\ref{obs:decreasingPot}, to switch from the multiplicative
regime to the additive regime, simply counting the remaining
improvement steps.  

\begin{theorem}[Improved bound for geometric scaling]\label{thm:mraScalingRt}
 Let \(P = \set{x \in \R^n}{Ax = b,\, l \leq x \leq u}\),
 let \(\rho\) be the potential function from
 Lemma~\ref{lem:standardPotential}, and let \(x^0 \in P \cap \Z^n\) be an integer feasible
 solution. Then Algorithm~\ref{alg:mraScaling} solves the optimization
 problem \(\max \set{cx}{x \in P \cap \Z^n}\)
 with at most \(\order{n \log(C (U-L))}\) computations of approximate MRA
 directions.
\end{theorem}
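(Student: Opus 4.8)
The plan is to combine Lemma~\ref{lem:updatesPerPhase} (at most \(4n\) approximate MRA directions per scaling phase) with the two structural properties of geometric scaling, Observation~\ref{obs:decreasingPot} and Observation~\ref{obs:GSnoRevisit}, but to split the run of the algorithm into two regimes and account for them differently. The naive estimate ``(number of phases) \(\times\ 4n\)'' loses a \(\log n\) factor precisely because the number of phases is \(\Theta(\log n + \log(C(U-L)))\); the improvement comes from the observation that the last \(\Theta(\log n)\) phases together require only \(O(n)\) augmentations rather than \(O(n\log n)\).

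First I would bound the number of phases: since \(\mu\) is initialized to \(2C(U-L)\), halved at every phase change, and the loop stops once \(\mu < 1/n\), there are at most \(\log(2C(U-L)) + \log n + O(1)\) scaling phases, and each phase change costs one (failed) direction computation. Now split the phases into the \emph{early} regime, those with \(\mu \geq 1\), and the \emph{late} regime, those with \(\mu < 1\). There are at most \(\log(2C(U-L)) + O(1) = O(\log(C(U-L)))\) early phases, each contributing at most \(4n\) directions by Lemma~\ref{lem:updatesPerPhase}, for a total of \(O(n\log(C(U-L)))\) computations. For the late regime, let \(\tilde{x}\) be the current solution when the first late phase is entered, i.e., the solution produced at the end of the last early phase, whose parameter lies in \([1,2)\); by Observation~\ref{obs:decreasingPot} the remaining gap satisfies \(c(x^\star - \tilde{x}) \leq \mu n < 2n\), where \(x^\star\) is an integral optimum. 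Then invoke Observation~\ref{obs:GSnoRevisit}: every augmentation strictly increases the value of \(c\) on the current iterate, and since all iterates lie in \(P \cap \Z^n\) and \(c \in \Z^n\), each augmentation increases \(c\tilde{x}\) by at least \(1\), while \(c\tilde{x}\) never exceeds \(cx^\star\). Hence the total number of augmentations performed across \emph{all} late phases combined is at most \(c(x^\star - \tilde{x}) < 2n\), irrespective of how many late phases there are. Summing up — \(O(n\log(C(U-L)))\) in the early regime, \(O(n)\) in the late regime, and \(O(\log n + \log(C(U-L)))\) failed computations at phase changes — yields the claimed \(O(n\log(C(U-L)))\) bound; the degenerate cases (\(c = 0\), \(u = l\), or \(C(U-L)\) below a small constant, where \(O(n)\) augmentations suffice by the additive argument alone) are handled directly.

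The only conceptually delicate step — and the heart of the \(\log n\) savings — is the switch, once \(\mu\) has dropped below \(1\), from the \emph{multiplicative} accounting of Lemma~\ref{lem:updatesPerPhase} (each augmentation recovers a \(\geq 1/(4n)\) fraction of the current remaining gap) to an \emph{additive} accounting (each augmentation closes an absolute amount \(\geq 1\) of a gap that is already below \(2n\)). Everything else is bookkeeping; one should merely double-check the off-by-one in the phase count and the exact constant coming out of Observation~\ref{obs:decreasingPot} at the start of a phase, neither of which affects the asymptotics.
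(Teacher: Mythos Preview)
Your proposal is correct and follows essentially the same argument as the paper: split the run at the point where \(\mu\) drops to about \(1\), use Lemma~\ref{lem:updatesPerPhase} to bound the early phases by \(O(n\log(C(U-L)))\), and then switch to the additive accounting via Observation~\ref{obs:decreasingPot} and Observation~\ref{obs:GSnoRevisit} to bound all remaining augmentations by \(O(n)\). The only cosmetic differences are constants (the paper takes the cut one halving later, getting a gap bound of \(n\) rather than your \(2n\)) and that the paper does not separately discuss the failed oracle calls at phase transitions, deferring that to a remark.
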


\begin{proof}
  The algorithm initializes with \(\mu = 2 C(U-L)\). Hence after
  \(\ceil{\log(C(U-L))}+1\) updates of~\(\mu\),
  we have \(\mu\leq1\), and by
  Lemma~\ref{lem:updatesPerPhase} have computed at most
  \(4n(\ceil{\log(C(U-L))}+1)\) approximate MRA directions in total. Let \(\tilde{x}\) be
  the last solution computed by the algorithm in these first
  \(\ceil{\log(C(U-L))}+1\) scaling phases.

  We now switch to the additive regime and simply count the number of
  remaining improvements that are possible. As \(\mu\leq1\), Observation~\ref{obs:decreasingPot} implies
  \[
  c(x^\star - \tilde{x}) \leq \mu \cdot \rho(\tilde{x}, x^\star -
  \tilde{x}) \leq n,
  \]
  where \(x^\star\) is an integral optimal solution with respect to \(c\). Since all data is
  integral and by Observation~\ref{obs:GSnoRevisit}, every approximate
  MRA direction leads to an improvement of the objective function by at
  least \(1\). It follows that no more than \(n\) solutions may be
  generated before obtaining a solution with cost \(cx^\star\). Hence the
  algorithm terminates after computing at most
  \(4n(\ceil{\log(C(U-L))}+1)+n\) approximate MRA directions.
\end{proof}

\begin{rem}[Oracle calls vs.~approximate MRA directions] In
  Theorem~\ref{thm:mraScalingRt} and elsewhere we count the number of
  approximate MRA directions that we compute. That is slightly
  different than counting the number of calls to an approximate
  MRA oracle: we do not count the number of calls for which no
  approximate MRA direction exist for a given scaling factor \(\mu\)
  and where \(\mu\)
  is rescaled. However, note that this number of calls is dominated by the
  number of calls which do return improving directions. For example in
  Theorem~\ref{thm:mraScalingRt}, in the last phase where
  \(\mu \leq 1\), we rescale at most \(O(\log n) = o(n)\) times until \(\mu
  < 1/n\). In fact, all our results also hold (up to constant factors) if we
  consider the number of oracle calls rather than approximate MRA directions.
\end{rem}

Note that the bound in Theorem~\ref{thm:mraScalingRt} is stronger than the one given in
Theorem~\ref{thm:mraAlgo} for Algorithm~\ref{alg:mra}. In fact, the
above result implies the same worst-case bound for
Algorithm~\ref{alg:mra}: As Algorithm~\ref{alg:mraScaling} may use a
ratio-maximizing direction in each step,
Algorithm~\ref{alg:mra} inherits any worst-case upper bounds proven for
Algorithm~\ref{alg:mraScaling}. Thus we obtain the following improvement:

\begin{corollary}[Improved bound for MRA]\label{cor:mraAlgoTightened}
  Let \(P = \set{x \in \R^n}{Ax = b,\, l \leq x \leq u}\), \(\rho\) be the potential function from
  Lemma~\ref{lem:standardPotential}, and let \(x^0 \in P \cap \Z^n\).
  Then Algorithm~\ref{alg:mra} solves the optimization
  problem \(\max \set{cx}{x \in P \cap \Z^n}\)
  with at most \(\order{n \log(C (U-L))}\) computations of an MRA direction.
\end{corollary}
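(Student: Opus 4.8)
The plan is to deduce the bound directly from Theorem~\ref{thm:mraScalingRt}, exploiting that a run of Algorithm~\ref{alg:mra} is a special case of a run of Algorithm~\ref{alg:mraScaling}. Concretely, I would run Algorithm~\ref{alg:mraScaling} from the same $x^0$ under the following policy: at each \textbf{compute} step, if the maximum-ratio direction $z = x - \tilde x$ (as it would be computed by Algorithm~\ref{alg:mra}, breaking ties identically) satisfies $\tfrac{c z}{\rho(\tilde x, z)} > \mu$, return that direction; otherwise report infeasibility so that $\mu$ is halved. Since $\tfrac{c z}{\rho(\tilde x, z)} > \mu$ is equivalent to $c z - \mu\,\rho(\tilde x, z) > 0$, this is a legitimate execution of Algorithm~\ref{alg:mraScaling}. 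The key claim is that this execution visits exactly the same sequence of iterates as Algorithm~\ref{alg:mra}, the only difference being the interspersed halvings of $\mu$ (which do not count as computed MRA directions). Granting that, the number of MRA directions computed by Algorithm~\ref{alg:mra} equals the number of approximate MRA directions computed by this execution of Algorithm~\ref{alg:mraScaling}, and Theorem~\ref{thm:mraScalingRt} bounds the latter by $\order{n \log (C(U-L))}$.

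To establish the claim I would check two things. First, the execution never terminates while the current iterate $\tilde x$ is non-optimal: then some augmenting integer feasible $x$ exists, with $c(x - \tilde x) \ge 1$ by integrality of the data and $\rho(\tilde x, x - \tilde x) \le n$ by Lemma~\ref{lem:standardPotential}, so the maximum ratio $\tfrac{c(x - \tilde x)}{\rho(\tilde x, x - \tilde x)}$ is at least $\tfrac1n$; hence $\mu$ is halved only finitely often before a move is forced, and in particular before the move the scaling factor is still large enough to return the maximum-ratio direction. Second, both algorithms stop precisely at the first integer-optimal iterate (Algorithm~\ref{alg:mra} because no augmenting direction remains, Algorithm~\ref{alg:mraScaling} by the standard scaling analysis), so the two runs end at the same point. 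The only delicate spot is the degenerate case in which the maximum ratio at a non-optimal iterate equals exactly $\tfrac1n$ and the terminating condition $\mu < 1/n$ of Algorithm~\ref{alg:mraScaling} might trigger one step ``too early''; this changes the count by at most an $\order{\log n}$ additive term — it is the same rescaling bookkeeping already flagged in the remark after Theorem~\ref{thm:mraScalingRt} — and is absorbed into the $\order{\cdot}$.

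I do not expect a real obstacle here: this is a short corollary, and the simulation above is essentially the one-line observation made in the text preceding the statement. As an independent cross-check, one can instead prove the bound directly, mirroring the proof of Theorem~\ref{thm:mraScalingRt}: the per-iteration guarantee already shown in the proof of Theorem~\ref{thm:mraAlgo} gives $c z \ge \tfrac1{2n}\,c(x^\star - \tilde x)$, so the absolute gap $c(x^\star - \tilde x)$ is multiplied by at most $1 - \tfrac1{2n}$ per iteration; starting from a gap of at most $n C(U-L)$, after $\order{n \log(C(U-L))}$ iterations the gap drops to at most $n$, and from there integrality together with the fact that Algorithm~\ref{alg:mra} strictly increases $c\tilde x$ (hence never revisits a point, each step closing at least one unit of gap) forces at most $n$ further iterations, for a total of $\order{n \log(C(U-L))}$.
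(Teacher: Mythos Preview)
Your proposal is correct and follows the same approach as the paper: the paper's entire argument is the one sentence preceding the corollary, namely that Algorithm~\ref{alg:mraScaling} may always choose a ratio-maximizing direction and hence Algorithm~\ref{alg:mra} inherits the bound of Theorem~\ref{thm:mraScalingRt}; you have simply fleshed out that simulation with care. Your alternative direct proof in the last paragraph (geometric decay to gap $\le n$, then at most $n$ unit-improvement steps) is also correct and is exactly the argument of Theorem~\ref{thm:mraScalingRt} transplanted to Algorithm~\ref{alg:mra}.
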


Moreover, in the case of 0/1 polytopes where the description of the LP
relaxation is in equality form, we obtain: 

\begin{corollary}[Worst-case performance for 0/1 polytopes]
\label{cor:worstCaseIdentical}
  Let \(P = \set{x \in \R^n}{Ax = b,\, 0 \leq x \leq \ones}\), \(\rho\) be the potential function from
  Lemma~\ref{lem:standardPotential}, and let \(x^0 \in P \cap \binSet^n\).
  Then Algorithms~\ref{alg:mra} and  \ref{alg:mraScaling} both solve the optimization
  problem \(\max \set{cx}{x \in P \cap \binSet^n}\)
  with at most \(\order{n \log C}\) augmentations.
\end{corollary}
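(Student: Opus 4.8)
The plan is to obtain Corollary~\ref{cor:worstCaseIdentical} as an immediate specialization of Theorem~\ref{thm:mraScalingRt} (for Algorithm~\ref{alg:mraScaling}) and Corollary~\ref{cor:mraAlgoTightened} (for Algorithm~\ref{alg:mra}), both of which already give the bound \(\order{n \log(C(U-L))}\) for a general polytope in equality form with finite bounds. The only thing to check is what the parameters \(U\) and \(L\) become in the \(0/1\) setting. First I would note that a \(0/1\) polytope written in equality form as \(P = \set{x \in \R^n}{Ax = b,\, 0 \leq x \leq \ones}\) is exactly of the form required by Lemma~\ref{lem:standardPotential}, with \(l = 0\) and \(u = \ones\); hence \(U = \max_{i \in [n]} u_i = 1\) and \(L = \min_{i \in [n]} l_i = 0\), so that \(U - L = 1\) and \(\log\!\big(C(U-L)\big) = \log C\). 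Substituting this into the two cited bounds gives \(\order{n \log(C(U-L))} = \order{n \log C}\) for both algorithms.

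Second, I would record the trivial observation that, since \(P \subseteq \cube{n}\), we have \(P \cap \Z^n = P \cap \binSet^n\), so the hypothesis \(x^0 \in P \cap \binSet^n\) is the same as \(x^0 \in P \cap \Z^n\), the optimization problem \(\max \set{cx}{x \in P \cap \binSet^n}\) coincides with \(\max \set{cx}{x \in P \cap \Z^n}\), and the potential function \(\rho\) of Lemma~\ref{lem:standardPotential} is applicable verbatim. With these identifications in place, Theorem~\ref{thm:mraScalingRt} applied to Algorithm~\ref{alg:mraScaling} and Corollary~\ref{cor:mraAlgoTightened} applied to Algorithm~\ref{alg:mra} each yield at most \(\order{n \log C}\) augmentations, which is the claim.

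There is essentially no substantive obstacle here: all the work has already been done in the preceding results, and the equality-form requirement on the description of \(P\) — the one genuine restriction — is assumed in the statement. The only mild care point is the degenerate regime of small \(C\) (e.g.\ \(c \in \{-1,0,1\}^n\), where \(\log C\) may vanish): there the phase count in Theorem~\ref{thm:mraScalingRt} is the \(\ceil{\log C}+1\) from its proof and the additive \(+n\) term dominates, so one gets \(\order{n(1+\log C)}\) augmentations, which I would simply absorb into the \(\order{n\log C}\) notation (interpreting it, as is standard, as \(\order{n \max\{1,\log C\}}\)); if \(c = 0\) the problem is trivial. No new estimates are needed beyond this bookkeeping.
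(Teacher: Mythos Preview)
Your proposal is correct and matches the paper's approach exactly: the corollary is stated without proof in the paper, as it follows immediately from Theorem~\ref{thm:mraScalingRt} and Corollary~\ref{cor:mraAlgoTightened} by specializing to \(l=0\), \(u=\ones\), so that \(U-L=1\). Your additional remarks about \(P\cap\Z^n = P\cap\binSet^n\) and the handling of small \(C\) are fine bookkeeping but not needed for the argument.
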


\subsection{Geometric scaling for arbitrary polytopes in the 0/1 cube}
\label{sec:geom-scal-arbitr}

We will now briefly explain how the setup from above can be changed in the
case of polytopes \(P = \face{x \mid Ax \leq b} \subseteq \cube{n}\), i.e.,
not requiring equality form. Clearly, \(P\) can be written in equality form
by adding slack variables. This, however, changes the ambient dimension,
which affects all the bounds above. Moreover, slack variables do not
necessarily have to be 0/1 variables, complicating things further. Thus, we
present a tailored analysis for problems in the above form with a
particular potential function.

The following observation is crucial:

\begin{observation}[Exhaustiveness for 0/1 polytopes]\label{obs:01exhaust}
Every 0/1 solution \(x \in
P\) is a vertex of \(P\), and, in particular, for each coordinate either
\(0 \leq x\) or \(x \geq \ones\) is tight, so that any direction \(0\neq z\in
\face{-1,0,1}^n\) is always exhaustive. For the potential function \(\rho(x,z)
\define \onorm{z}\), clearly \(\rho(x,z) \leq n\), and
\(\rho(x,z) \geq 1\) whenever \(z \neq 0\). Moreover, we do not
need homogeneity, as no scaling of directions is required.
\end{observation}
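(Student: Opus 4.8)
The final statement to prove is Observation~\ref{obs:01exhaust}, which makes three assertions about $P = \{x \mid Ax \le b\} \subseteq \cube{n}$: that every $0/1$ point of $P$ is a vertex and hence every nonzero $z \in \face{-1,0,1}^n$ is exhaustive for it; that $\rho(x,z) \define \onorm{z}$ satisfies $\rho(x,z) \le n$ and $\rho(x,z) \ge 1$ for $z \neq 0$; and that homogeneity is unnecessary here.

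My plan is to prove the three parts in order, as each is short. For the first part, I would argue that if $x \in P \cap \binSet^n$, then for every coordinate $j$ either $x_j = 0$ or $x_j = 1$, so either the bound $x_j \ge 0$ or the bound $x_j \le 1$ is tight at $x$; since these $n$ tight bound-inequalities are linearly independent (they span $\R^n$), $x$ is a vertex of $P$. Then to see that any $0 \neq z \in \face{-1,0,1}^n$ is exhaustive for $x$ (assuming $x + z \in P$), pick any $j \in \supf{z}$; then $z_j = \pm 1$, so $x_j + 2z_j \in \{-2, -1, 2, 3\} \cap (\{-2,-1\} \cup \{2,3\})$ — more precisely, if $z_j = 1$ then $x_j = 0$ (forced by $x + z \in P \subseteq \cube{n}$) and $x_j + 2z_j = 2 > 1$, while if $z_j = -1$ then $x_j = 1$ and $x_j + 2z_j = -1 < 0$; either way $x + 2z \notin \cube{n} \supseteq P$, so $z$ is exhaustive. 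For the second part, $\rho(x,z) = \onorm{z} = \sum_{j=1}^n \abs{z_j}$; since $z \in \face{-1,0,1}^n$, each $\abs{z_j} \in \{0,1\}$, so $\onorm{z} = \card{\supf{z}} \le n$, and $\onorm{z} \ge 1$ exactly when $\supf{z} \neq \emptyset$, i.e. $z \neq 0$. For the third part, I would simply note that directions used in the $0/1$ setting always lie in $\face{-1,0,1}^n$ and are automatically exhaustive, so no rescaling step $z \mapsto \alpha z$ is ever invoked, and hence Property~\ref{item:3} of Definition~\ref{def:potential} plays no role; the remark in the text after Observation~\ref{obs:01exhaust} already anticipates this.

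I do not expect any genuine obstacle here — this is essentially a bookkeeping observation, and the only mild subtlety is being careful that "$z \in \face{-1,0,1}^n$ with $x + z \in P$" forces $x + z \in \binSet^n$ (because $x + z$ is integral and lies in $\cube{n}$), which is what pins down $x_j$ from the sign of $z_j$ and drives the exhaustiveness argument. One should also implicitly use that we only ever care about such $z$ as feasible directions at $0/1$ points $x$, so the claim "always exhaustive" is understood relative to feasibility, matching the convention fixed earlier in Section~\ref{sec:geometric-scaling} that directions are feasible by definition.

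\begin{proof}
  Let $x \in P \cap \binSet^n$. For each coordinate $j \in [n]$ we have
  $x_j \in \{0,1\}$, so either the inequality $x_j \ge 0$ or the
  inequality $x_j \le 1$ is tight at $x$; these $n$ tight inequalities
  are linearly independent, hence $x$ is a vertex of $P$.

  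Now let $z \in \face{-1,0,1}^n$ with $z \neq 0$ be a feasible
  direction for $x$, i.e., $x + z \in P \subseteq \cube{n}$. Since
  $x + z$ is integral and lies in $\cube{n}$, we have $x + z \in
  \binSet^n$. Pick any $j \in \supf{z}$. If $z_j = 1$, then $x_j + z_j
  \le 1$ forces $x_j = 0$, so $x_j + 2 z_j = 2 > 1$; if $z_j = -1$,
  then $x_j + z_j \ge 0$ forces $x_j = 1$, so $x_j + 2 z_j = -1 < 0$.
  In either case $x + 2z \notin \cube{n} \supseteq P$, so $z$ is
  exhaustive for $x$.

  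For $\rho(x,z) \define \onorm{z}$: since $z \in \face{-1,0,1}^n$, we
  have $\abs{z_j} \in \{0,1\}$ for all $j$, so $\onorm{z} = \sum_{j \in
  [n]} \abs{z_j} = \card{\supf{z}}$. Thus $\rho(x,z) \le n$ always, and
  $\rho(x,z) \ge 1$ precisely when $\supf{z} \neq \emptyset$, i.e.,
  whenever $z \neq 0$.

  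Finally, in the $0/1$ setting all directions we consider lie in
  $\face{-1,0,1}^n$ and are, by the above, already exhaustive for the
  $0/1$ point at hand; hence no rescaling $z \mapsto \alpha z$ with
  $\alpha \ge 1$ is ever required, and Property~\ref{item:3} of
  Definition~\ref{def:potential} is not needed.
\end{proof}
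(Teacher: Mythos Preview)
Your proposal is correct. The paper presents this as an observation with the justification built into the statement itself rather than a separate proof, so you have simply spelled out in detail the same reasoning the paper leaves implicit: tightness of a bound at every coordinate gives the vertex property and forces exhaustiveness of any nonzero \(\{-1,0,1\}\)-direction, and the \(\ell_1\)-norm bounds are immediate from \(z \in \{-1,0,1\}^n\).
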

 
We obtain the following lemma with a proof essentially identical to the
one for Lemma~\ref{lem:updatesPerPhase}.

\begin{lemma}\label{lem:updatesPerPhaseAlt}
  Let \(P = \set{x \in \R^n}{Ax \leq b,\, 0 \leq x \leq \ones}\), let
  \(\rho(x,z) = \onorm{z}\), and let \(x^0 \in P \cap \face{0,1}^n\) be a feasible 0/1
  solution. Then Algorithm~\ref{alg:mraScaling} computes at most \(2n\)
  approximate MRA directions between successive updates of \(\mu\).
\end{lemma}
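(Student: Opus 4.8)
The plan is to mirror the proof of Lemma~\ref{lem:updatesPerPhase} almost verbatim, tracking how the constants change when we replace the equality-form potential of Lemma~\ref{lem:standardPotential} (with its bounds \(\rho \leq n\) and \(\rho > \tfrac12\) for exhaustive directions) by the \(\ell_1\)-potential \(\rho(x,z) = \onorm{z}\) of Observation~\ref{obs:01exhaust} (with bounds \(\rho \leq n\) and \(\rho \geq 1\) for nonzero, hence exhaustive, \(0/1\) directions). First I would let \(y^0, y^1, \dots\) be the iterates visited during the scaling phase for a fixed \(\mu\), with \(y^0\) the solution inherited from the previous phase. Since \(y^0\) is the last solution of the phase for \(2\mu\), Observation~\ref{obs:decreasingPot} (which only uses the defining inequality of Algorithm~\ref{alg:mraScaling} and \(\rho \leq n\), both available here) gives
\[
\frac{c(x^\star - y^0)}{\rho(y^0, x^\star - y^0)} \leq 2\mu,
\]
where \(x^\star\) is an integral optimum; combined with \(\rho(y^0, x^\star - y^0) \leq n\) this yields \(c(x^\star - y^0) \leq 2\mu n\).

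Next I would lower-bound the per-step progress. For consecutive iterates \(y^i, y^{i+1}\), the algorithm guarantees \(c(y^{i+1}-y^i) - \mu \cdot \rho(y^i, y^{i+1}-y^i) > 0\), and since \(y^{i+1}-y^i \neq 0\) is a \(0/1\)-difference direction (hence automatically exhaustive by Observation~\ref{obs:01exhaust}), we have \(\rho(y^i, y^{i+1}-y^i) = \onorm{y^{i+1}-y^i} \geq 1\). Therefore
\[
c(y^{i+1}-y^i) > \mu \cdot \rho(y^i, y^{i+1}-y^i) \geq \mu \geq \frac{1}{2n}\, c(x^\star - y^0).
\]
So each step in the phase closes at least a \(\tfrac{1}{2n}\) fraction of the gap \(c(x^\star - y^0)\) present at the start of the phase, and since the objective value along the iterates is strictly increasing by Observation~\ref{obs:GSnoRevisit} (so we never exceed \(c x^\star\)), there can be at most \(2n\) such steps before the phase ends. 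This gives the claimed bound of \(2n\) approximate MRA directions per phase.

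The only genuinely new point — and the one I would be most careful about — is the justification that \emph{no scaling of directions is needed}, i.e., that \(z = x - \tilde x\) is already exhaustive so the "pick \(\alpha\)" step in Algorithm~\ref{alg:mraScaling} always takes \(\alpha = 1\). This is exactly Observation~\ref{obs:01exhaust}: every \(0/1\) point of \(P\) is a vertex with all coordinate bounds tight in one direction, so any nonzero \(z \in \{-1,0,1\}^n\) fails to be feasible when doubled. Hence there is no appeal to Property~\ref{item:3} (homogeneity) of the potential, which is fortunate since \(\onorm{\cdot}\) is homogeneous anyway but the argument does not rely on it. With that settled, the proof is structurally identical to Lemma~\ref{lem:updatesPerPhase}; the factor improves from \(4n\) to \(2n\) precisely because the exhaustive-direction lower bound sharpens from \(\tfrac12\) to \(1\). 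I would therefore keep the writeup short, pointing to Lemma~\ref{lem:updatesPerPhase} for the details and only highlighting where the constant \(1\) (rather than \(\tfrac12\)) enters.
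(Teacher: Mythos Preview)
Your proposal is correct and follows essentially the same approach as the paper's proof: both reference Lemma~\ref{lem:updatesPerPhase} for the structure, use Observation~\ref{obs:decreasingPot} to bound \(\frac{c(x^\star-y^0)}{\rho(y^0,x^\star-y^0)}\leq 2\mu\), and then invoke Observation~\ref{obs:01exhaust} to replace the \(\tfrac12\) lower bound on \(\rho\) by \(1\), yielding the chain \(c(y^{i+1}-y^i) > \mu\,\rho(y^i,y^{i+1}-y^i)\geq \mu \geq \tfrac{1}{2n}c(x^\star-y^0)\). Your additional remark that \(\alpha=1\) always suffices (so homogeneity is not needed) is a nice clarification that the paper leaves implicit via Observation~\ref{obs:01exhaust}.
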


\begin{proof}
  The beginning of the proof is as in Lemma~\ref{lem:updatesPerPhase}, but now two consecutive iterates \(y^i,y^{i+1}\) within a scaling phase satisfy
  \begin{align*}
    c(y^{i+1}-y^i) &\geq \mu\cdot \rho(y^{i},y^{i+1}-y^i)  \geq \mu
    \geq \frac{1}{2} \frac{c(x^\star-y^0)}{\rho(y^0,x^\star-y^0)} \geq
    \frac{1}{2n} c(x^\star-y^0),
  \end{align*}
  by Observation~\ref{obs:01exhaust}, where \(x^\star\)
  is an integral optimal solution with respect to \(c\).
  Hence at most \(2n\)
  approximate MRA directions are computed in each scaling phase.
\end{proof}

With this lemma we obtain the following version of
Theorem~\ref{thm:mraScalingRt} for \emph{arbitrary} polytopes \(P
\subseteq \cube{n}\). The proof follows exactly as in
Theorem~\ref{thm:mraScalingRt}, but with Lemma~\ref{lem:updatesPerPhaseAlt}
playing the role of Lemma~\ref{lem:updatesPerPhase}.

\begin{theorem}
\label{thm:geoScalGen01}
 Let \(P = \set{x \in \R^n}{Ax \leq b,\, 0 \leq x \leq \ones}\),
 let \(\rho(x,z) = \onorm{z}\), and let \(x^0 \in P \cap \face{0,1}^n\) be a feasible 0/1
 solution. Then Algorithm~\ref{alg:mraScaling} solves the optimization
 problem \(\max \set{cx}{x \in P \cap \binSet^n}\)
 with at most \(\order{n \log C }\) computations of approximate MRA
 directions. 
\end{theorem}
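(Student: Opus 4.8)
The plan is to mirror the proof of Theorem~\ref{thm:mraScalingRt} verbatim, substituting Lemma~\ref{lem:updatesPerPhaseAlt} for Lemma~\ref{lem:updatesPerPhase} and the potential $\rho(x,z)=\onorm{z}$ for the one from Lemma~\ref{lem:standardPotential}. First I would observe that Algorithm~\ref{alg:mraScaling} initializes $\mu = 2C(U-L)$; since $P \subseteq \cube{n}$ we have $U=1$, $L=0$, so $\mu = 2C$. Hence after $\ceil{\log C}+1$ halvings of $\mu$ we reach $\mu \leq 1$, and by Lemma~\ref{lem:updatesPerPhaseAlt} at most $2n(\ceil{\log C}+1)$ approximate MRA directions have been computed over these phases. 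Let $\tilde{x}$ be the solution at the end of these phases.

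Next I would switch to the additive regime exactly as in Theorem~\ref{thm:mraScalingRt}. With $\mu \leq 1$, Observation~\ref{obs:decreasingPot} (adapted to the potential $\onorm{\cdot}$, whose value is at most $n$ by Observation~\ref{obs:01exhaust}) gives $c(x^\star - \tilde{x}) \leq \mu\cdot\rho(\tilde{x},x^\star-\tilde{x}) \leq n$, where $x^\star$ is an integral optimum for $c$. Since all data is integral (here $c \in \Z^n$) and by Observation~\ref{obs:GSnoRevisit} every approximate MRA direction strictly increases $cx$, hence by at least $1$, at most $n$ further directions can be computed before reaching cost $cx^\star$. Therefore the total is at most $2n(\ceil{\log C}+1)+n = \order{n\log C}$, as claimed.

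One small point to check is that Observation~\ref{obs:decreasingPot} was stated for the specific potential of Lemma~\ref{lem:standardPotential}, but its content — that at the end of the $\mu$-phase every integer feasible $x$ satisfies $c(x-\tilde{x})/\rho(\tilde{x},x-\tilde{x}) \leq \mu$ — holds for any potential function, since it is immediate from the termination condition of the phase (no direction with $c(x-\tilde{x}) - \mu\rho(\tilde{x},x-\tilde{x}) > 0$ exists). The bound $\rho(\tilde{x},x^\star-\tilde{x}) \leq n$ then comes from $\onorm{z} \leq n$ for any $z \in \face{-1,0,1}^n$, which is the setting here since both $\tilde{x}$ and $x^\star$ lie in $\binSet^n$. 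There is no real obstacle; the only thing requiring a line of care is confirming that $\mu \leq 1$ (rather than $\mu < 1/n$) already suffices to trigger the additive argument, just as in Theorem~\ref{thm:mraScalingRt} — the remaining $O(\log n)$ halvings down to $\mu < 1/n$ contribute only lower-order terms and are absorbed into the constant, as noted in the remark following Theorem~\ref{thm:mraScalingRt}.
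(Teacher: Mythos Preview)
Your proposal is correct and follows essentially the same approach as the paper, which simply states that the proof proceeds exactly as in Theorem~\ref{thm:mraScalingRt} with Lemma~\ref{lem:updatesPerPhaseAlt} replacing Lemma~\ref{lem:updatesPerPhase}. You have faithfully filled in the details, including the specialization \(U-L=1\), the bound \(\onorm{z}\leq n\) from Observation~\ref{obs:01exhaust}, and the additive-regime argument; the extra remarks about Observation~\ref{obs:decreasingPot} holding for any potential and about the residual \(O(\log n)\) halvings are accurate and match the paper's own caveats.
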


In particular, the computation of the approximate MRA direction can be
performed with a \emph{single call} to an augmentation oracle as the
resulting program with \(\onorm{\cdot}\) as potential function can be
phrased as an integer program. Thus, bit scaling and geometric scaling
require essentially the same number of computations of augmenting
steps (see Lemma~\ref{lem:bitScaling}). The following is a
generalization of Corollary~\ref{cor:worstCaseIdentical} in the case
of \(P \subseteq \cube{n}\).

\begin{corollary} 
\label{cor:identicalWorstCaseGen01} 
  Let \(P = \set{x \in \R^n}{Ax \leq b,\, 0 \leq x \leq \ones} \subseteq
  \cube{n}\) be a polytope and
  consider the potential function \(\rho(x,z) \coloneqq \onorm{z}\).
  Let  \(x^0 \in P \cap \binSet^n\) be an arbitrary integral solution.
  Then Algorithms~\ref{alg:mra} and  \ref{alg:mraScaling} both solve the optimization
  problem \(\max \set{cx}{x \in P \cap \binSet^n}\)
  with at most \(\order{n \log C}\) augmentations.
\end{corollary}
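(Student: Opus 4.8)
The plan is to obtain both bounds from results already established in this section, with essentially no extra work beyond bookkeeping. For Algorithm~\ref{alg:mraScaling} the statement is immediate: here \(l = 0\) and \(u = \ones\), so \(U - L = 1\) and \(\log(C(U-L)) = \log C\), and Theorem~\ref{thm:geoScalGen01} directly gives the \(\order{n \log C}\) bound on the number of computed approximate MRA directions, each of which is exactly one augmentation step. It is worth recording in passing that in this regime the integer \(\alpha\) picked inside Algorithm~\ref{alg:mraScaling} is always \(1\): by Observation~\ref{obs:01exhaust}, for 0/1 iterates \(\tilde{x}, x\) the direction \(z = x - \tilde{x} \in \face{-1,0,1}^n\) is nonzero and hence already exhaustive, so no rescaling of directions (and no use of homogeneity of \(\rho(x,z) = \onorm{z}\)) is needed.

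For Algorithm~\ref{alg:mra} I would argue, exactly as in the derivation of Corollary~\ref{cor:mraAlgoTightened}, that any run of Algorithm~\ref{alg:mra} can be replayed as a run of Algorithm~\ref{alg:mraScaling}. Fix an iterate \(\tilde{x}\) of Algorithm~\ref{alg:mra} and let \(x\) attain the maximum ratio \(\mu^\star \define \max \set{\tfrac{c(y-\tilde{x})}{\onorm{y-\tilde{x}}}}{c(y-\tilde{x}) > 0,\ y \in P \cap \binSet^n}\); note \(\mu^\star \leq \mnorm{c} = C\) since \(c(y-\tilde{x}) \leq C\,\onorm{y-\tilde{x}}\). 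Then for every scaling factor \(\mu < \mu^\star\) the point \(x\) satisfies \(c(x-\tilde{x}) - \mu\,\onorm{x-\tilde{x}} > 0\), so \(x\) is a legal answer of the approximate-MRA oracle used inside Algorithm~\ref{alg:mraScaling} during the phase for \(\mu\). Since \(\mu\) is initialized to \(2C(U-L) = 2C \geq \mu^\star\) and is only ever halved, after finitely many halvings the current phase has \(\mu < \mu^\star\), and from then on we may let the oracle return precisely this max-ratio direction. Playing this strategy throughout (and breaking ties consistently with Algorithm~\ref{alg:mra}) produces a run of Algorithm~\ref{alg:mraScaling} whose sequence of iterates contains, in order, the full sequence of iterates of Algorithm~\ref{alg:mra}, and which terminates (when \(\mu\) finally drops below \(1/n\)) at the same optimal solution.

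It follows that the number of augmentation steps of Algorithm~\ref{alg:mra} is at most the number of approximate MRA directions computed in this particular run of Algorithm~\ref{alg:mraScaling}, which by Theorem~\ref{thm:geoScalGen01} is \(\order{n \log C}\); the additional oracle calls that merely trigger a halving of \(\mu\) number only \(\order{\log(nC)}\) and are subsumed in the bound (cf.\ the remark following Theorem~\ref{thm:mraScalingRt}). This yields the claimed \(\order{n \log C}\) bound for both algorithms. The only slightly delicate point is the phase-to-iteration matching in the previous paragraph, but it is entirely parallel to the argument preceding Corollary~\ref{cor:mraAlgoTightened}, now carried out with the potential \(\rho(x,z) = \onorm{z}\) and with Observation~\ref{obs:01exhaust} replacing Lemma~\ref{lem:standardPotential}; I expect this to be the main (and only minor) obstacle, everything else being a direct citation.
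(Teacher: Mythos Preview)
Your proposal is correct and follows essentially the same approach as the paper: the corollary is stated without proof there, being an immediate consequence of Theorem~\ref{thm:geoScalGen01} for Algorithm~\ref{alg:mraScaling} together with the observation (made before Corollary~\ref{cor:mraAlgoTightened}) that Algorithm~\ref{alg:mra} inherits any worst-case bound for Algorithm~\ref{alg:mraScaling} since a ratio-maximizing direction is always a valid approximate MRA direction. Your write-up simply spells out these two ingredients in more detail than the paper does, including the harmless but unnecessary verification that \(\alpha = 1\) and \(\mu^\star \leq C\).
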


We will now give an intuition for the result above. In fact, it turns
out that the geometric scaling algorithm
(Algorithm~\ref{alg:mraScaling}) and bit scaling
(Algorithm~\ref{alg:bitScaling}) are closely related:

\begin{rem}[Relation between bit scaling and geometric scaling]
  Observe that the potential function from
  Lemma~\ref{lem:standardPotential} is equivalent to the potential
  function
  \(\rho(\tilde{x}, x - \tilde{x}) \define \abs{\supf{x - \tilde{x}}}\)
  in the 0/1 case, provided we consider only feasible directions. Now
  consider a polytope \(P \subseteq \cube{n}\),
  an integral objective function \(c \in \Z^n_+\)
  (which we can assume to be nonnegative by flipping), an integer
  feasible point \(\tilde{x} \in P \cap \face{0,1}^n\),
  and \(\mu = 2^\ell\)
  for some \(\ell \in \N\).
  In Algorithm~\ref{alg:mraScaling} we search for a direction~\(z\)
  defined by \(z = x - \tilde{x}\),
  where \(x \in P\) is integer feasible so that
\[
c(x - \tilde{x}) - \mu\, \abs{\supf{x - \tilde{x}}} > 0.
\]
If we would now pick any coordinate \(j \in [n]\), then the above stipulates that it is only
beneficial to deviate from the \(\tilde{x}_j\) value if \(c_j >
2^\ell\). Writing \(c = c^1 + c^0\) with \(c^1 \define \floor{c/2^\ell} \cdot 2^\ell\) and \(c^0
\define c - c^1\), we obtain
\[
c^1(x - \tilde{x}) + \underbrace{c^0(x - \tilde{x}) - 2^\ell\, \abs{\supf{x - \tilde{x}}}}_{\leq 0 }>0.
\]
Hence, \(c^1(x-x_0) > 0\) is a necessary condition. Although this condition does
not guarantee an improvement over~\(\tfrac{cx}{\rho(\tilde{x}, x - \tilde{x})}\),
in each phase at most \(n\) augmentation steps are necessary
(see the analysis in the proof of Lemma~\ref{lem:bitScaling}), leading
virtually to the same overall running time as for
Algorithm~\ref{alg:mraScaling}, however with the additional
simplification of not explicitly having to consider the potential.   
\end{rem}

\subsection{Improved bounds for structured 0/1 polytopes}

When proving worst-case bounds for both bit scaling and geometric
scaling, a crucial element is the \(\order{n}\) bound on the number of
improvements made per scaling phase. In the case of bit scaling, this bound
is due to the number of positive entries in the vector \(x - \tilde{x}\)
being at most \(n\) for any integral point \(x\), \(\tilde{x} \in
P\). For geometric scaling, the bound arises from potential function
values. In particular, the potential \(\rho(x,z)
\coloneqq \onorm{z}\) is bounded from above by
\(n\). If this bound can be reduced for special polytopes, it would have direct
consequences for worst-case bounds of either algorithm.

One condition that guarantees such a reduction is the
following: Let \(P\subseteq[0,1]^n\) be a polytope, and suppose there exists some function \(f:\Z_+\rightarrow\Z_+\)
such that every integral point \(x\in P\) has no more than \(f(n)\)
nonzero entries. In particular we are hoping for an \(o(n)\) function,
such as \(\sqrt{n}\) or \(\log n\). We then obtain the following improved worst-case
bounds for both bit scaling and geometric scaling.

\begin{theorem}\label{thm.limitedNonzeros}
  Let \(c\in\R^n\) be a cost vector, \(P\subseteq[0,1]^n\) a polytope, and let
  the potential function \(\rho\) be given as \(\rho(x,z) \coloneqq
  \onorm{z}\). Suppose there exists a function \(f: \Z_+ \to \Z_+\) such
  that every integral point \(x\in P\) has at most \(f(n)\) nonzero entries. Then,
  given an initial solution \(x^0\in P\), Algorithms~\ref{alg:bitScaling} and
  \ref{alg:mraScaling} solve the optimization problem \(\max \set{cx}{x \in
    P \cap \Z^n}\) after \(\order{f(n)\log C}\) augmentations.
\end{theorem}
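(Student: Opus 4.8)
The plan is to re-run the phase-counting arguments of Lemma~\ref{lem:bitScaling} (for bit scaling, Algorithm~\ref{alg:bitScaling}) and of Lemma~\ref{lem:updatesPerPhaseAlt}/Theorem~\ref{thm:geoScalGen01} (for geometric scaling, Algorithm~\ref{alg:mraScaling}, with the potential $\rho(x,z)=\onorm z$), observing that the only place where the ambient dimension $n$ enters the bound on the number of augmentations \emph{per scaling phase} is as an a~priori bound on the support size of an integral point of $P$. Replacing this bound by $f(n)$ yields an $\order{f(n)}$ bound per phase, while the number of phases depends only on $C$ (and on $U-L=1$) and stays $\order{\log C}$.

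For bit scaling, following the standing assumption $c\in\Z^n_+$ of Lemma~\ref{lem:bitScaling}, I would reinspect its proof. In the first phase $c^\mu\in\binSet^n$, so $c^\mu(x^\mu-x^0)\le c^\mu x^\mu\le\card{\supf{x^\mu}}\le f(n)$; in the inductive step $c'=2c^\mu+\tilde c$ with $\tilde c\in\binSet^n$, hence $c'(x'-x^\mu)=2c^\mu(x'-x^\mu)+\tilde c(x'-x^\mu)\le 0+\tilde c x'\le\card{\supf{x'}}\le f(n)$. Since $c^\mu$ is integral, at most $f(n)$ augmenting steps are made per phase; over the $1+\ceil{\log C}$ phases, Algorithm~\ref{alg:bitScaling} terminates after at most $f(n)\,(1+\ceil{\log C})=\order{f(n)\log C}$ augmentations.

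For geometric scaling I would retrace the chain in the proof of Lemma~\ref{lem:updatesPerPhaseAlt}, where the only use of $n$ is the estimate $\rho(y^0,x^\star-y^0)=\onorm{x^\star-y^0}\le n$. As $x^\star,y^0\in\binSet^n$ we have $\supf{x^\star-y^0}\subseteq\supf{x^\star}\cup\supf{y^0}$, so $\onorm{x^\star-y^0}\le 2f(n)$, and the same chain gives at most $4f(n)$ approximate MRA directions per phase. After the $1+\ceil{\log C}$ phases needed to reach $\mu\le1$, Observation~\ref{obs:decreasingPot} gives $c(x^\star-\tilde x)\le\mu\cdot\rho(\tilde x,x^\star-\tilde x)\le 2f(n)$; then, by Observation~\ref{obs:GSnoRevisit} together with integrality of the data, each further augmentation strictly increases $c\tilde x$ by at least $1$ without revisiting a point, so at most $2f(n)$ further augmentations occur (the remaining rescalings down to $\mu<1/n$ then return no improving direction). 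Summing, Algorithm~\ref{alg:mraScaling} performs $\order{f(n)\log C}$ augmentations as well.

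I do not expect a genuine obstacle: the argument is bookkeeping layered on top of the existing proofs. The one point worth stating carefully is the asymmetry between the two algorithms — for bit scaling only the \emph{positive} part of $x-\tilde x$ is counted, so sparsity of the single endpoint $x$ suffices (bound $f(n)$), whereas for geometric scaling the potential sees the entire support of $x^\star-y^0$, forcing us to invoke sparsity at both endpoints and incur the harmless factor $2$. One should also recall, as in the remark on oracle calls versus approximate MRA directions, that the $\order{\log n}$ additional rescalings Algorithm~\ref{alg:mraScaling} carries out between $\mu=1$ and $\mu<1/n$ produce no augmenting steps once an optimum has been reached, so they do not affect the bound.
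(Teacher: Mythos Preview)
Your proposal is correct and follows essentially the same route as the paper: for both algorithms you re-run the per-phase counting argument, replacing the trivial upper bound on $\onorm{z}$ (respectively on $\tilde c(x'-x^\mu)$) by one based on $f(n)$, and then multiply by the unchanged $\order{\log C}$ phase count. Your constants are in fact slightly sharper than the paper's---you observe that $\tilde c(x'-x^\mu)\le\tilde c x'\le f(n)$ rather than the paper's cruder bound of $2f(n)$ on the number of positive entries of $x'-x^\mu$, and similarly you track $4f(n)$ per geometric-scaling phase where the paper writes $8f(n)$---but the structure of the argument is identical.
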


\begin{proof}
  For Algorithm~\ref{alg:bitScaling} the proof follows as for
  Lemma~\ref{lem:bitScaling}: suppose \(x^\mu\in P\) optimizes
  \(c^\mu=\floor{c/\mu}\) over \(P\), and we move to the next scaling phase
  (dividing \(\mu\) by 2) in which we optimize over \(c'=2c^\mu+\tilde c\)
  for some \(\tilde c\in\{0,1\}^n\). If we take \(x'\) to be an optimal
  solution with respect to \(c'\), we have
  \[
  c' (x' - x^\mu) = \underbrace{2 c^\mu (x' - x^\mu)}_{\leq 0} +
  \underbrace{\tilde{c} (x' - x^\mu)}_{\leq 2\, f(n)} \leq 2 f(n),
  \]
  since the direction \(x' - x^\mu \in\{-1,0,1\}^n\) has at most
  \(2\, f(n)\) positive entries. Hence, no more than~\(2 f(n)\)
  improvements can be made in any of the \(\ceil{\log C}+1\) scaling
  phases.

  In the case of Algorithm~\ref{alg:mraScaling}, we know that \(\rho\)
  is bounded above by \(2f(n)\).
  From this point forward, the proof follows exactly as for
  Theorem~\ref{thm:mraScalingRt}, with requiring in total at most
  \(8f(n)(\ceil{\log(C)}+1)+2f(n)\) approximate MRA directions.
\end{proof}

Many well-studied polytopes satisfy the structural constraint from
Theorem~\ref{thm.limitedNonzeros}, especially those arising from
graph-theoretic problems. For example, take the traveling salesman polytope
\(P\subseteq[0,1]^E\)
on the complete graph with~\(k\)
nodes and \(\card{E} = \binom{k}{2}\)
edges. Even though the polytope is contained in a space of ambient
dimension \(\binom{k}{2}\),
its integral points (corresponding to tours on the graph)
contain exactly \(k\)
nonzero entries, spanning a low dimensional subspace. Hence optimizing
over \(P\)
using either Algorithm~\ref{alg:bitScaling} or
Algorithm~\ref{alg:mraScaling} can be done in \(\order{k\log C}\)
augmentations, a factor-\(k\) improvement over the general
\(\order{k^2 \log C}\)
upper bound. Similar statements hold for the case of maximum weight
matchings on a complete graph.

\section{Worst-case example for bit scaling}
\label{sec:worst-case-examples}

We will now show that the upper bound in Lemma~\ref{lem:bitScaling}, on the number of augmentations necessary for bit scaling, is tight. For this we provide a
family of polytopes \(P_n \subseteq \cube{n}\) and cost functions
\(c^p\) so that the bit scaling
method needs \(\Omega(n \log \mnorm{c^p})\) augmentation steps in the worst case. 

Each instance of this family is parametrized by two numbers, namely \(k \in \Z_{+}\), which dictates the dimension \(n \define 8k-2\) of the cube \(\cube{n}\),
and \(p\in\Z_{+}\), which controls how the objective function~\(c^p\) is built, and, by construction, the number \(p\) of bit scaling phases that will be required to solve the instance.

\subsection{Construction of the polytope}
\label{sec:constr-polyt}

The polytope \(P_n\subseteq[0,1]^n\) will be of the form
\[
P_n=\conv{\face{y^1, \dots, y^{2k}}},
\]
where the vectors \(y^j\in\{0,1\}^n\) are defined in terms of vectors
\(y^{j,1}\in\{0,1\}^{k-1},y^{j,2}\in\{0,1\}^{k-1},y^{j,3}\in\{0,1\}^{3k}\), and
\(y^{j,4}\in\{0,1\}^{3k}\). With these four families of vectors defined, the full vector \(y^j\) is given by
\[
y^j \define \begin{pmatrix}
y^{j,1}\\y^{j,2}\\y^{j,3}\\y^{j,4}
\end{pmatrix}
\qquad\text{or equivalently}\qquad
y^j_i \define \begin{cases}
y^{j,1}_i&\text{for }i\in\{1, \dots, k-1\},\\
y^{j,2}_{i-k+1}&\text{for }i\in\{k, \dots, 2k-2\},\\
y^{j,3}_{i-2k+2}&\text{for }i\in\{2k-1, \dots, 5k-2\},\\
y^{j,4}_{i-5k+2}&\text{for }i\in\{5k-1, \dots, 8k-2\}.
\end{cases}
\]
The parts \(y^{j,1},y^{j,2}\) are defined in two batches.  For the first batch with \(j\in
\face{1,\dots,k}\), we define
\[
y^{j,1}_i \define \begin{cases}
1&\text{if }i\geq j,\\
0&\text{otherwise},
\end{cases}
\quad\quad
y^{j,2}_i \define \begin{cases}
1&\text{if }i<j,\\
0&\text{otherwise}
\end{cases}
\quad\text{ for }i = 1, \dots, k-1.
\]
For the second batch with \(j\in \face{k+1,\dots,2k}\), we define
\[
y^{j,1}_i \define \begin{cases}
1&\text{if }i\geq j-k,\\
0&\text{otherwise},
\end{cases}
\quad\quad
y^{j,2}_i \define\begin{cases}
1&\text{if }i<j-k,\\
0&\text{otherwise},
\end{cases}
\quad\text{ for }i = 1, \dots, k-1.
\]
We define \(y^{j,3},y^{j,4}\) with \(j\in \face{1,\dots,2k}\) as follows
\[
y^{j,3}_i \define \begin{cases}
1&\text{if }j\leq k,\\
0&\text{otherwise},
\end{cases}
\quad\quad
y^{j,4}_i \define\begin{cases}
1&\text{if }j>k,\\
0&\text{otherwise},
\end{cases}
\quad\text{ for }i = 1, \dots, 3k.
\]
See Figure~\ref{pointmatrix} for an illustration.

\subsection{Construction of the cost vector}
\label{sec:constr-cost-vect}

The cost vector is defined inductively, keeping the mechanics of the
bit scaling procedure in mind. We first define \(c^0 \define 0\), and for \(\ell = 1, \dots, p\), we build \(c^\ell=2c^{\ell-1}+d^\ell\), for some vector \(d^\ell\in\{0,1\}^n\) to be specified. We will find it convenient to construct \(d^\ell = (d^{\ell,1}, d^{\ell,2}, d^{\ell,3},d^{\ell,4})\) in terms of
vectors \(d^{\ell,1}, d^{\ell,2}, d^{\ell,3}\), and \(d^{\ell,4}\) in the same manner
as we did for the points \(y^j\). 

For \(d^1 \define c^1\), let
\[
d^{1,1} \define \ones,\quad\quad
d^{1,2} \define 0,\quad\quad
d^{1,3}_i \define \begin{cases}
1&\text{if }i\leq k,\\
0&\text{otherwise},
\end{cases}
\text{ for } i = 1, \dots, 3k,
\quad\quad
d^{1,4} \define 0.
\]
For \(\ell \geq 2\), we set
\[
d^{\ell,1} \define 0,\quad\quad d^{\ell,2} \define \ones,\quad\quad
d^{\ell,3} \define \begin{cases}
\ones&\text{if }\ell \text{ is odd},\\
0&\text{otherwise},
\end{cases}
\quad\quad
d^{\ell,4} \define \begin{cases}
\ones&\text{if }\ell \text{ is even},\\
0&\text{otherwise}.
\end{cases}
\]
In particular, after the first scaling phase, the contribution of the
first \(2(k-1)\) coordinates is the same for all
\(y^j\). In fact, we use the first \(2(k-1)\) coordinates for the
improvements steps within a scaling phase and the last \(6k\)
coordinates to switch between the phases; this will become clear
soon. Note that for each \(\ell>1\), \(\log\mnorm{c^\ell}\in\Theta(\ell)\).

\subsection{Lower bound on the number of augmentations}
\label{sec:lower-bound-number}

We will now derive a lower bound on the worst-case number of augmentations computed by the bit scaling algorithm when applied to a polytope \(P_n\) and cost vector \(c^p\) as defined in Section \ref{sec:constr-polyt} and \ref{sec:constr-cost-vect}, respectively. We depict the overall structure of the construction in
Figure~\ref{pointmatrix}, describing the points \(y^j\) and the
``layers'' \(d^\ell\) of the cost function. Note how the columns in Figure~\ref{pointmatrix} are divided into four segments. These four segments correspond to the four families of vectors used in defining~\(y^j\) and \(d^\ell\). For example, the first group of columns in the \(y^j\) row depict the vector \(y^{j,1}\), the second group of columns depict \(y^{j,2}\), and so on.

\begin{figure}[tb]
\[
\begin{array}{c|ccccc|ccccc|cccc|cccc}\toprule
& \multicolumn{5}{c|}{y^{j,1}} & \multicolumn{5}{c|}{y^{j,2}} & \multicolumn{4}{c|}{y^{j,3}}& \multicolumn{4}{c}{y^{j,4}}\\\midrule
&1&2&3&\cdots&k-1&1&2&3&\cdots&k-1&1&2&\cdots&3k&1&2&\cdots&3k\\\midrule
y^1&1&1&1&\cdots&1&0&0&0&\cdots&0&1&1&\cdots&1&0&0&\cdots&0\\
y^2&0&1&1&\cdots&1&1&0&0&\cdots&0&1&1&\cdots&1&0&0&\cdots&0\\
y^3&0&0&1&\cdots&1&1&1&0&\cdots&0&1&1&\cdots&1&0&0&\cdots&0\\
\vdots&\vdots&\vdots&\vdots&\ddots&\vdots&\vdots&\vdots&\vdots&\ddots&\vdots&\vdots&\vdots&\ddots&\vdots&\vdots&\vdots&\ddots&\vdots\\
y^k&0&0&0&\cdots&0&1&1&1&\cdots&1&1&1&\cdots&1&0&0&\cdots&0\\\midrule
y^{k+1}&1&1&1&\cdots&1&0&0&0&\cdots&0&0&0&\cdots&0&1&1&\cdots&1\\
y^{k+2}&0&1&1&\cdots&1&1&0&0&\cdots&0&0&0&\cdots&0&1&1&\cdots&1\\
y^{k+3}&0&0&1&\cdots&1&1&1&0&\cdots&0&0&0&\cdots&0&1&1&\cdots&1\\
\vdots&\vdots&\vdots&\vdots&\ddots&\vdots&\vdots&\vdots&\vdots&\ddots&\vdots&\vdots&\vdots&\ddots&\vdots&\vdots&\vdots&\ddots&\vdots\\
y^{2k}&0&0&0&\cdots&0&1&1&1&\cdots&1&0&0&\cdots&0&1&1&\cdots&1\\\midrule
d^1&1&1&1&\cdots&1&0&0&0&\cdots&0&d^{1,3}_1&d^{1,3}_2&\cdots&d^{1,3}_{3k}&0&0&\cdots&0\\
d^2&0&0&0&\cdots&0&1&1&1&\cdots&1&0&0&\cdots&0&1&1&\cdots&1\\
d^3&0&0&0&\cdots&0&1&1&1&\cdots&1&1&1&\cdots&1&0&0&\cdots&0\\
d^4&0&0&0&\cdots&0&1&1&1&\cdots&1&0&0&\cdots&0&1&1&\cdots&1\\
d^5&0&0&0&\cdots&0&1&1&1&\cdots&1&1&1&\cdots&1&0&0&\cdots&0\\
\vdots&\vdots&\vdots&\vdots&\vdots&\vdots&\vdots&\vdots&\vdots&\vdots&\vdots&\vdots&\vdots&\vdots&\vdots&\vdots&\vdots&\vdots&\vdots\\
\bottomrule
\end{array}
\]\caption{Structure of \(y^j\) and \(d^\ell\); note that \(d^{1,3}\) depends on \(k\).}\label{pointmatrix}
\end{figure}

The essence of the proof is the following: within a scaling phase, the algorithm may move to \emph{any} solution with an improving cost with respect to vector \(\floor{c/\mu}\) (recall that \(\mu\) is the scaling factor), no matter the magnitude of the improvement. In our construction, no matter the choice of \(k,\ell\), the bit scaling algorithm begins by optimizing over the cost vector \(c^1\). The construction is such that \(c^1y^1>c^1y^2> \dots >c^1y^{2k}\). Thus if the algorithm begins at initial solution \(y^{2k}\), it may visit \emph{all} of the~\(2k\) points in \(P_n\), ending the initial phase at~\(y^1\).

In the second scaling phase, the algorithm optimizes over \(c^2\). We will see that we have \(c^2y^1<c^2y^{2k}<c^2y^{2k-1}< \dots <c^2y^{k+1}\). Thus, in this phase, the algorithm may take \(k\) augmentation steps before finishing at point \(y^{k+1}\). In the third augmentation phase, while optimizing over \(c^3\), we similarly have \(c^3y^{k+1}<c^3y^k< \dots <c^3y^1\), giving another possible \(k\) augmentations within the phase.

The process continues in each subsequent scaling phase, with the algorithm having the opportunity to travel through each of the points \(y^{2k},y^{2k-1}, \dots, y^{k+1}\) in even phases, and \(y^k,y^{k-1}, \dots, y^1\) in odd phases, as depicted in Figure~\ref{fig:worstCase}. Since \(k\approx n/8\), this implies a worst case \(\order{n}\) augmentations per scaling phase, meeting the upper bound from Lemma~\ref{lem:bitScaling}.

We now begin the formal proof. We will first show that in each phase \(\ell\), the first~\(k\) points
\(y^1,\dots, y^k\) are ordered
in a decreasing fashion by the objective function \(c^\ell\) and
similar for the second~\(k\) points \(y^{k+1}, \dots, y^{2k}\). In
a second step we will then link the two groups. 

\begin{lemma}[Decreasing order within each group]\label{lem:intragroupOrder}
  Let \(c^\ell,y^j\) be constructed as above. For any \(\ell \geq1\) and \(j
  \in\face{1,\dots,k-1}\cup\face{k+1,\dots,2k-1}\), we have
  \[
  c^\ell y^j=c^\ell y^{j+1}+1.
  \]
\end{lemma}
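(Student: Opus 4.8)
The plan is to compute $c^\ell y^j - c^\ell y^{j+1}$ directly by induction on $\ell$, exploiting the recursive structure $c^\ell = 2c^{\ell-1} + d^\ell$ together with the explicit formulas for the $y^{j,r}$. First I would fix $j$ in one of the two index ranges, say $j \in \{1,\dots,k-1\}$ (the argument for $j \in \{k+1,\dots,2k-1\}$ is identical after shifting indices by $k$), and observe that the third and fourth blocks $y^{j,3}, y^{j,4}$ depend only on whether $j \le k$, hence coincide for $y^j$ and $y^{j+1}$; consequently these blocks contribute $0$ to the difference $c^\ell y^j - c^\ell y^{j+1}$ regardless of $\ell$. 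So the entire difference is governed by the first two blocks, and there $y^{j}$ and $y^{j+1}$ differ in exactly one coordinate: from the first-batch definition, $y^{j,1}$ has a $1$ in position $i$ iff $i \ge j$, so $y^{j,1}$ and $y^{j+1,1}$ differ only in position $j$ (where $y^{j,1}_j = 1$, $y^{j+1,1}_j = 0$), and symmetrically $y^{j,2}$ and $y^{j+1,2}$ differ only in position $j$ (where $y^{j,2}_j = 0$, $y^{j+1,2}_j = 1$). Writing $a^\ell$ for the restriction of $c^\ell$ to the first block and $b^\ell$ for its restriction to the second block, we therefore get $c^\ell y^j - c^\ell y^{j+1} = a^\ell_j - b^\ell_j$.

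Next I would track $a^\ell_j$ and $b^\ell_j$ through the recursion. From $d^1$ we have $a^1 = \ones$, $b^1 = 0$, so $a^1_j - b^1_j = 1$. For $\ell \ge 2$ the increment is $d^{\ell,1} = 0$, $d^{\ell,2} = \ones$, hence $a^\ell_j = 2 a^{\ell-1}_j$ and $b^\ell_j = 2 b^{\ell-1}_j + 1$. A one-line induction then gives $a^\ell_j = 2^{\ell-1}$ and $b^\ell_j = 2^{\ell-1} - 1$ for all $\ell \ge 1$ (base case $\ell = 1$ already checked; inductive step $a^\ell_j = 2\cdot 2^{\ell-2} = 2^{\ell-1}$ and $b^\ell_j = 2(2^{\ell-2}-1)+1 = 2^{\ell-1}-1$). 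Therefore $c^\ell y^j - c^\ell y^{j+1} = 2^{\ell-1} - (2^{\ell-1} - 1) = 1$, which is exactly the claim. For the second group $j \in \{k+1,\dots,2k-1\}$, the second-batch definitions of $y^{j,1}, y^{j,2}$ are the first-batch ones with $j$ replaced by $j-k$, so the same coordinate-$(j-k)$ difference argument applies and the third/fourth blocks again cancel since all of $y^{k+1},\dots,y^{2k}$ have $j > k$; the computation of $a^\ell_{j-k} - b^\ell_{j-k}$ is verbatim the same.

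I do not expect a serious obstacle here; the statement is essentially a bookkeeping verification. The one place to be careful is the indexing bookkeeping: making sure that the coordinate where $y^j$ and $y^{j+1}$ differ within each of the first two blocks really is a single coordinate (position $j$, resp.\ $j-k$) and that no off-by-one creeps in at the boundaries $j = k-1$ or $j = k+1$ — this is why the lemma is stated only for $j$ up to $k-1$ and from $k+1$ to $2k-1$, avoiding the cross-group jump $y^k \to y^{k+1}$ which is handled separately in the next step. I would also state explicitly, once, that $d^{\ell,3}$ and $d^{\ell,4}$ are irrelevant to this lemma precisely because $y^{j,3} = y^{j+1,3}$ and $y^{j,4} = y^{j+1,4}$ for all $j$ in the allowed ranges, so that the reader sees why only the first two blocks matter.
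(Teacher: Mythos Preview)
Your proposal is correct. Both you and the paper argue by induction on \(\ell\) using the recursion \(c^\ell = 2c^{\ell-1} + d^\ell\), but you organize the computation more economically. The paper evaluates the full inner products \(c^\ell y^j\) block by block (carrying along the third- and fourth-block contributions as auxiliary terms \(\alpha_\ell,\beta_\ell\) that are only shown to cancel when the difference \(c^\ell y^j - c^\ell y^{j+1}\) is finally taken), and the induction hypothesis is applied to the inner-product difference itself: \(c^\ell y^j - c^\ell y^{j+1} = 2(c^{\ell-1}y^j - c^{\ell-1}y^{j+1}) - 1 = 2\cdot 1 - 1 = 1\). You instead observe at the outset that blocks three and four are constant across the relevant pair \((y^j,y^{j+1})\), reduce the difference to a single pair of cost-vector entries \(a^\ell_j - b^\ell_j\), and then solve the scalar recursion for those entries in closed form. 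Your route avoids carrying irrelevant terms and makes the independence of the answer from \(j\) (within each group) transparent, at the modest cost of introducing notation for the block restrictions of \(c^\ell\); the paper's route is slightly more pedestrian but requires no new notation.
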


\begin{proof}
  The proof is by induction on \(\ell\), with base case \(\ell =1\). For \(j\in\{1, \dots, 2k\}\), define \(\alpha_{1,j} \define d^{1,3}
      y^{j,3}\). For \(j\in\{1, \dots, 2k\}\), we have
  \begin{align*}
    c^1 y^j = d^1 y^j = \underbrace{d^{1,1} y^{j,1}}_{= k-j} +
    \underbrace{d^{1,2} y^{j,2}}_{= 0} + \underbrace{d^{1,3}
      y^{j,3}}_{= \alpha_{1,j}} + \underbrace{d^{1,4} y^{j,4}}_{=0}
    = k - j + \alpha_{1,j}.
  \end{align*}
  By construction, we have \(\alpha_{1,1}=\alpha_{1,2}= \dots =\alpha_{1,k} = 1\) and \(\alpha_{1,k+1}=\alpha_{1,k+2}= \dots =\alpha_{1,2k}=0\). Thus for \(j \in \face{1,\dots,k-1}\cup\face{k+1,\dots,2k-1}\),  we can establish
  \[
  c^1 y^j - c^1 y^{j+1} = k-j+\alpha_{1,j} - (k-(j+1)+\alpha_{1,j+1})= 1,
  \]
as \(\alpha_{1,j+1} = \alpha_{1,j}\).

  Now assume \(\ell \geq 2\). For \(j\in\{1, \dots, k\}\) we can verify that
  \begin{align*}
    c^\ell y^j&=2c^{\ell-1}y^j+d^\ell y^j \\
    &=2c^{\ell-1}y^j+ \underbrace{d^{\ell,1}y^{j,1}}_{= 0, \text{ as }
      d^{\ell,1} = 0} +\underbrace{d^{\ell,2}y^{j,2}}_{=
      j-1}+\underbrace{d^{\ell,3}y^{j,3}}_{\enifed
      \alpha_\ell}+\underbrace{d^{\ell,4}y^{j,4}}_{= 0, \text{ as }
      y^{j,4} = 0 \text{ for } j \leq k}\\
    &=2c^{\ell-1}y^j+(j-1)+\alpha_\ell,
  \end{align*}
  where \(\alpha_\ell =3k\) if \(\ell\) is odd, and otherwise \(\alpha_\ell
  =0\). Thus, for \(j\in\face{1,\dots,k-1}\) we have
  \begin{align*}
    c^\ell y^j-c^\ell y^{j+1}&=2c^{\ell-1}y^j+j-1+\alpha_\ell -(2c^{\ell-1}y^{j+1}+j+\alpha_\ell)\\
    &=2(\underbrace{c^{\ell-1}y^j-c^{\ell-1}y^{j+1}}_{=1, \text{ by induction}})-1 = 1.
  \end{align*}
  We can do a similar analysis for  \(\ell \geq 2\) and \(j\in
  \face{k+1, \dots, 2k}\):
  \begin{align*}
    c^\ell y^j&=2c^{\ell-1}y^j+d^\ell y^j \\
    &=2c^{\ell-1}y^j+ \underbrace{d^{\ell,1}y^{j,1}}_{= 0, \text{ as }
      d^{\ell,1} = 0} +\underbrace{d^{\ell,2}y^{j,2}}_{=
      j-1}+\underbrace{d^{\ell,3}y^{j,3}}_{= 0, \text{ as }
      y^{j,3} = 0 \text{ for } j \geq k+1}+\underbrace{d^{\ell,4}y^{j,4}}_{\enifed
      \beta_\ell}\\
    &=2c^{\ell-1}y^j+(j-1)+\beta_\ell,
  \end{align*}
  where \(\beta_\ell =3k\) if \(\ell\) is even, and \(\beta_\ell
  =0\) otherwise . As before we obtain that for \(j = k+1,\dots,2k-1\),
  \(c^\ell y^j-c^\ell y^{j+1} = 1\) holds. 
\end{proof}

Note that in the above argument the values of \(d^{\ell,3}\), \(d^{\ell,4}\)
are irrelevant as they are eliminated in the difference of two
consecutive points. However, they will become important as they
enable the switching between and linking of the two groups
\(\{y^1, \dots, y^k\}\)
and \(\{y^{k+1}, \dots, y^{2k}\}\)
as we will show now.  To this end we prove the following lemma:

\begin{lemma}[Decreasing intergroup ordering]\label{lem:intergroupOrder}
  For any \(\ell \geq1\), if \(\ell\) is odd then \(c^\ell y^{k}=c^\ell
  y^{k+1}+1\), and if \(\ell\) is even then \(c^\ell y^{2k}=c^\ell y^1+1\).
\end{lemma}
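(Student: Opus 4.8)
The plan is to prove the two cases directly from the explicit formulas for $c^\ell$ and the coordinates of $y^j$, by induction on $\ell$ where needed, in exactly the same spirit as Lemma~\ref{lem:intragroupOrder}. The key observation is that the first two blocks $y^{j,1}, y^{j,2}$ behave identically for $j = k$ and $j = k+1$ (both have $y^{\cdot,1} = 0$ and $y^{\cdot,2} = \ones$ in their respective batches), so their contributions cancel in the difference $c^\ell y^k - c^\ell y^{k+1}$; likewise $y^1$ and $y^{2k}$ agree on the first block ($y^{\cdot,1} = \ones$) and on the second block ($y^{\cdot,2} = 0$), so those contributions cancel in $c^\ell y^{2k} - c^\ell y^1$. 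Hence in both cases the entire difference is carried by the last two blocks $y^{\cdot,3}, y^{\cdot,4}$, and it is here that the parity-dependent definition of $d^{\ell,3}, d^{\ell,4}$ does its work.

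First I would handle $\ell = 1$ as a base case. Using the computation $c^1 y^j = k - j + \alpha_{1,j}$ from the proof of Lemma~\ref{lem:intragroupOrder}, together with $\alpha_{1,k} = 1$ and $\alpha_{1,k+1} = 0$, one gets $c^1 y^k - c^1 y^{k+1} = (k - k + 1) - (k - (k+1) + 0) = 1 + 1 = 2 \neq 1$, so the odd case needs to be checked more carefully --- actually wait, I should recompute: $c^1 y^k = k - k + 1 = 1$ and $c^1 y^{k+1} = k - (k+1) + 0 = -1$, giving a difference of $2$. So for $\ell = 1$ the claim $c^1 y^k = c^1 y^{k+1} + 1$ would fail; I would therefore double-check whether the first group in batch two is indexed so that $y^{k+1}$ has $y^{k+1,1} = \ones$ (i.e. $i \geq j - k = 1$ always holds), which gives $d^{1,1} y^{k+1,1} = k - 1$ rather than $k - (k+1)$. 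Re-reading the construction, for the second batch $y^{j,1}_i = 1$ iff $i \geq j - k$, so $y^{k+1,1} = \ones$ and $d^{1,1} y^{k+1,1} = k-1$, not $k - (k+1)$; so one must be careful that the formula $d^{1,1}y^{j,1} = k-j$ from Lemma~\ref{lem:intragroupOrder} is only valid for $j \leq k$, and for the second batch the analogous quantity is $k - (j - k)$. With the corrected bookkeeping, $c^1 y^k = 1 + \alpha_{1,k} = 1 + 1 = 2$ (wait, $k - k = 0$, plus $\alpha = 1$) and $c^1 y^{k+1} = (k-1) + 0 + 0 + \alpha'$ where $\alpha' = d^{1,4} y^{k+1,4} = 0$; so $c^1 y^{k+1} = k - 1$. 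That still does not give a difference of $1$ unless $k = 2$. So the resolution must be that in the $\ell=1$ odd case the relevant identity uses $d^{1,3}$: since $d^{1,3} = \ones$ only on the first $k$ coordinates (out of $3k$), $d^{1,3} y^{k,3} = k$ and $d^{1,3} y^{k+1,3} = 0$, contributing a difference of $k$, while $d^{1,1}$ contributes $0 - (k-1)$; net $k - (k-1) = 1$. Good — so the careful accounting across all four blocks is exactly what makes this work, and that is the main obstacle: tracking which block contributes what for the \emph{specific} pair $(y^k, y^{k+1})$ or $(y^{2k}, y^1)$ rather than for generic consecutive points.

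For $\ell \geq 2$ I would proceed by induction, using the recursion $c^\ell y^j = 2 c^{\ell-1} y^j + d^\ell y^j$ and $d^{\ell} = (0, \ones, d^{\ell,3}, d^{\ell,4})$ with $d^{\ell,3} = \ones$ iff $\ell$ odd and $d^{\ell,4} = \ones$ iff $\ell$ even. In the odd case, for the pair $(y^k, y^{k+1})$: the $d^{\ell,2} = \ones$ block gives $d^{\ell,2}(y^{k,2} - y^{k+1,2}) = 0$ since both equal $\ones$; the $d^{\ell,3}$ block (active since $\ell$ odd) gives $3k \cdot 1 - 3k \cdot 0 = 3k$ since $y^{k,3} = \ones$ and $y^{k+1,3} = 0$; the $d^{\ell,4}$ block is inactive. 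But for the inductive term $2(c^{\ell-1} y^k - c^{\ell-1} y^{k+1})$, note $\ell - 1$ is even, so I need the \emph{even} case of the lemma applied at level $\ell-1$ — but the even case controls $(y^{2k}, y^1)$, not $(y^k, y^{k+1})$. This mismatch means the two cases are genuinely coupled and I cannot induct on one alone; I expect the real proof threads the value of $c^{\ell-1} y^k - c^{\ell-1}y^{k+1}$ through a separate computed quantity (it will not be $1$ at even levels — indeed at even $\ell-1$, $d^{\ell-1,3} = 0$ so this difference is just $2 \cdot (\text{previous})$, collapsing geometrically to something like $\pm(3k - \text{const}) \cdot 2^{\text{something}}$ which the newly-added $3k$ from $d^{\ell,3}$ must exactly counterbalance down to $1$). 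So the heart of the argument is computing $c^{\ell} y^k - c^{\ell} y^{k+1}$ and $c^\ell y^{2k} - c^\ell y^1$ \emph{in tandem} as a coupled recursion and showing that at the appropriate parity each lands on $1$. The main obstacle, then, is setting up this coupled induction cleanly: defining $A_\ell := c^\ell y^k - c^\ell y^{k+1}$ and $B_\ell := c^\ell y^{2k} - c^\ell y^1$, deriving $A_\ell = 2 A_{\ell-1} + (\text{$3k$ or $0$ by parity})$ and similarly for $B_\ell$, checking the base cases $A_1, B_1$, and verifying that $A_\ell = 1$ for odd $\ell$ and $B_\ell = 1$ for even $\ell$ — the even-$A$ and odd-$B$ values being whatever intermediate quantities the recursion dictates. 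I would present exactly that: the two coupled difference recursions, their solution, and the parity extraction.
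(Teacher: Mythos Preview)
Your overall strategy—define \(A_\ell \coloneqq c^\ell y^k - c^\ell y^{k+1}\) and \(B_\ell \coloneqq c^\ell y^{2k} - c^\ell y^1\), set up linear recursions for them, and read off the values at the right parity—is perfectly viable, but your stated recursion is wrong because of a persistent bookkeeping error on the second block. You correctly catch mid-stream that \(y^{k+1,1}=\ones\), but you never correct the companion fact \(y^{k+1,2}=0\) (second batch: \(y^{j,2}_i=1\) iff \(i<j-k\), and \(j-k=1\)). Consequently \(d^{\ell,2}(y^{k,2}-y^{k+1,2})=k-1\), not \(0\); and you also omit the block-4 contribution \(d^{\ell,4}(y^{k,4}-y^{k+1,4})=-3k\) when \(\ell\) is even. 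The correct recursions (for \(\ell\geq2\)) are
\[
A_\ell=2A_{\ell-1}+(k-1)+\begin{cases}3k&\ell\ \text{odd}\\-3k&\ell\ \text{even}\end{cases},\qquad
B_\ell=2B_{\ell-1}+(k-1)+\begin{cases}-3k&\ell\ \text{odd}\\3k&\ell\ \text{even}\end{cases},
\]
with \(A_1=1\), \(B_1=1-2k\). These are \emph{not} coupled to each other at all; each alternates between the values \(1\) and \(1-2k\), which is exactly the statement of the lemma. Your proposed recursion \(A_\ell=2A_{\ell-1}+(3k\text{ or }0)\) fails already at \(\ell=3\).

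The paper's proof takes a different and somewhat slicker route: rather than tracking \(A_\ell\) and \(B_\ell\) through both parities via a standalone recursion, it runs an \emph{alternating} induction and uses Lemma~\ref{lem:intragroupOrder} as the bridge. Concretely, from \(A_{\ell-1}=1\) (odd \(\ell-1\)) together with Lemma~\ref{lem:intragroupOrder} one gets the full chain \(c^{\ell-1}y^1>c^{\ell-1}y^2>\cdots>c^{\ell-1}y^{2k}\) with unit gaps, hence \(B_{\ell-1}=1-2k\); a direct computation then gives \(B_\ell=1\). The next step swaps roles. This avoids having to solve any recursion explicitly and makes the role of the already-proved intragroup lemma transparent. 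Your approach, once the arithmetic is fixed, is a valid alternative that never invokes Lemma~\ref{lem:intragroupOrder}; its advantage is that it exhibits the off-parity values \(A_\ell=B_{\ell\pm1}=1-2k\) directly, at the cost of slightly more raw computation.
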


\begin{proof}
  The proof is by alternating induction on the odd and even case. First
  observe that \(c^1 y^k = c^1y^{k+1} + 1\), which will be the start of our
  induction for the odd case:
  \begin{align*}
    c^1 y^k - c^1y^{k+1} & = \underbrace{d^{1,1} (y^{k,1} -
      y^{k+1,1})}_{ = -(k-1)} + \underbrace{d^{1,2}
      (y^{k,2} - y^{k+1,2})}_{= 0} + \underbrace{d^{1,3} (y^{k,3} -
      y^{k+1,3})}_{= k} +
    \underbrace{d^{1,4} (y^{k,4} - y^{k+1,4})}_{= 0} = 1.
  \end{align*}

  First, let \(\ell \geq 1\) be even
  and suppose \(c^{\ell-1}y^k=c^{\ell-1}y^{k+1}+1\), which is satisfied in the case
  \(\ell =2\) by the above. Then, repeated application of Lemma~\ref{lem:intragroupOrder} yields
  \(c^{\ell-1}y^1=c^{\ell-1}y^{2k}+2k-1\). Moreover, we have
  \begin{align*} c^\ell y^1&=2c^{\ell-1}y^1+d^\ell y^1
    =2c^{\ell-1}y^1+\underbrace{d^{\ell,1}y^{1,1}}_{= 0, \text{ as
        \(\ell > 1\)}} +
    \underbrace{d^{\ell,2}y^{1,2}}_{= 0, \text{ as \(y^{1,2} =
        0\)}}+\underbrace{d^{\ell,3}y^{1,3}}_{= 0, \text{ as \(\ell\) even }} + \underbrace{d^{\ell,4}y^{1,4}}_{= 0, \text{ as \(y^{1,4} =
        0\)}}
    =2c^{\ell-1}y^1,
  \end{align*} and
  \begin{align*} c^\ell y^{2k}&=2c^{\ell-1} y^{2k}+d^\ell y^{2k}
    =2c^{\ell-1}y^{2k}+ \underbrace{d^{\ell,1}y^{2k,1}}_{= 0, \text{ as
        \(\ell > 1\)}} + \underbrace{d^{\ell,2}y^{2k,2}}_{=
      k-1}+\underbrace{d^{\ell,3}y^{2k,3}}_{= 0, \text{ as \(\ell\) even
      }}+\underbrace{d^{\ell,4}y^{2k,4}}_{= 3k} \\
    &=2c^{\ell-1}y^{2k}+(k-1)+3k =2c^{\ell-1}y^{2k}+4k-1.
  \end{align*} Thus, we obtain for the difference
  \begin{align*} c^\ell y^{2k}-c^\ell y^1&=2c^{\ell-1}y^{2k}+4k-1-2c^{\ell-1}y^1\\
    &=2(\underbrace{c^{\ell-1}y^{2k}-c^{\ell-1}y^1}_{= 1 - 2k, \text{
        from above}})+4k-1 =2(1-2k)+4k-1 =1.
  \end{align*}

  Now we consider the case where \(\ell\) is odd, which is similar to the
  one above. Assume that \(c^{\ell-1}y^{2k}=c^{\ell-1}y^1+1\), which we now know to
  hold for \(\ell = 3\) by means of the argument for \(\ell\) even case from
  above.
 Then, applying Lemma~\ref{lem:intragroupOrder} in increasing and
 decreasing direction, we obtain
  \(c^{\ell-1} y^{k} + 2k-1 = c^{\ell-1} y^{k+1}\). We will show that \(c^\ell y^{k}=c^\ell
  y^{k+1}+1\). We have
  \begin{align*}
    c^\ell y^k = 2c^{\ell -1} y^k + \underbrace{d^{\ell,1}y^{k,1}}_{= 0} +
    \underbrace{d^{\ell,2}y^{k,2}}_{= k-1} +
    \underbrace{d^{\ell,3}y^{k,3}}_{=3k}
    +\underbrace{d^{\ell,4}y^{k,4}}_{=0} = 2c^{\ell -1} y^k + 4k -1
  \end{align*}
  and
  \begin{align*}
    c^\ell y^{k+1} = 2c^{\ell -1} y^{k+1} + \underbrace{d^{\ell,1}y^{k+1,1}}_{= 0} +
    \underbrace{d^{\ell,2}y^{k+1,2}}_{= 0} +
    \underbrace{d^{\ell,3}y^{k+1,3}}_{=0}
    +\underbrace{d^{\ell,4}y^{k+1,4}}_{=0} = 2c^{\ell -1}y^{k+1},
  \end{align*}
  so that
  \begin{align*}
    c^\ell y^k - c^\ell y^{k+1} = 2 (\underbrace{c^{\ell - 1}y^k -
      c^{\ell-1} y^{k+1}}_{=1-2k}  ) + 4k -1 = 1.&\qedhere
  \end{align*}
\end{proof}

With these
last two lemmas in hand, we are ready to prove the worst-case lower
bound. The proof describes the possible behavior of the bit scaling
algorithm when given a polytope \(P_n\) and cost vector \(c^p\), as depicted in Figure \ref{fig:worstCase}. The
\(\Omega(n\log\mnorm{c^p})\) lower bound proven here meets the upper
bound established in Lemma~\ref{lem:bitScaling}, implying that the
analysis is tight.

\begin{figure}
\begin{center}
\begin{tikzpicture}[scale=.75, transform shape]
\tikzstyle{circlenode}=[draw,circle,minimum size=1.35cm]
\node [circlenode] (y11) at (12.5,10.5) {\(y^1\)};
\node [circlenode] (y21) at (10.5,10.5) {\(y^2\)};
\node [circlenode] (yk-11) at (7.5,10.5) {\(y^{k-1}\)};
\node [circlenode] (yk1) at (5.5,10.5) {\(y^k\)};
\node [circlenode] (yk+11) at (3.5,10.5) {\(y^{k+1}\)};
\node [circlenode] (yk+21) at (1.5,10.5) {\(y^{k+2}\)};
\node [circlenode] (y2k-11) at (-1.5,10.5) {\(y^{2k-1}\)};
\node [circlenode] (y2k1) at (-3.5,10.5) {\(y^{2k}\)};
\node (ell11) at (9,10.5) {\(\cdots\)};
\node (ell21) at (0,10.5) {\(\cdots\)};
\node (blank11) at (0,11) {};
\node (blank12) at (9,11) {};
\draw [-triangle 60] (y2k1)edge[bend left=60](y2k-11);
\draw [-triangle 60] (y2k-11)edge[bend left=60](blank11);
\draw [-triangle 60] (blank11)edge[bend left=60](yk+21);
\draw [-triangle 60] (yk+21)edge[bend left=60](yk+11);
\draw [-triangle 60]  (yk+11) edge[bend left=60] (yk1);
\draw [-triangle 60]  (yk1) edge[bend left=60] (yk-11);
\draw [-triangle 60]  (yk-11) edge[bend left=60] (blank12);
\draw [-triangle 60]  (blank12) edge[bend left=60] (y21);
\draw [-triangle 60]  (y21) edge[bend left=60] (y11);
\node [circlenode] (y11) at (12.5,8) {\(y^1\)};
\node [circlenode] (y21) at (10.5,8) {\(y^2\)};
\node [circlenode] (yk-11) at (7.5,8) {\(y^{k-1}\)};
\node [circlenode] (yk1) at (5.5,8) {\(y^k\)};
\node [circlenode] (yk+11) at (3.5,8) {\(y^{k+1}\)};
\node [circlenode] (yk+21) at (1.5,8) {\(y^{k+2}\)};
\node [circlenode] (y2k-11) at (-1.5,8) {\(y^{2k-1}\)};
\node [circlenode] (y2k1) at (-3.5,8) {\(y^{2k}\)};
\node (ell11) at (9,8) {\(\cdots\)};
\node (ell21) at (0,8) {\(\cdots\)};
\node (blank11) at (0,8.5) {};
\node (blank12) at (9,8.5) {};
\draw [red, -triangle 60] (y11)edge[out=225,in=320,looseness=.25](y2k1); 
\draw [-triangle 60] (y2k1)edge[bend left=60](y2k-11);
\draw [-triangle 60] (y2k-11)edge[bend left=60](blank11);
\draw [-triangle 60] (blank11)edge[bend left=60](yk+21);
\draw [-triangle 60] (yk+21)edge[bend left=60](yk+11);
\node [circlenode] (y11) at (12.5,5.5) {\(y^1\)};
\node [circlenode] (y21) at (10.5,5.5) {\(y^2\)};
\node [circlenode] (yk-11) at (7.5,5.5) {\(y^{k-1}\)};
\node [circlenode] (yk1) at (5.5,5.5) {\(y^k\)};
\node [circlenode] (yk+11) at (3.5,5.5) {\(y^{k+1}\)};
\node [circlenode] (yk+21) at (1.5,5.5) {\(y^{k+2}\)};
\node [circlenode] (y2k-11) at (-1.5,5.5) {\(y^{2k-1}\)};
\node [circlenode] (y2k1) at (-3.5,5.5) {\(y^{2k}\)};
\node (ell11) at (9,5.5) {\(\cdots\)};
\node (ell21) at (0,5.5) {\(\cdots\)};
\node (blank11) at (0,6) {};
\node (blank12) at (9,6) {};
\draw [red, -triangle 60]  (yk+11) edge[bend left=60] (yk1); 
\draw [-triangle 60]  (yk1) edge[bend left=60] (yk-11);
\draw [-triangle 60]  (yk-11) edge[bend left=60] (blank12);
\draw [-triangle 60]  (blank12) edge[bend left=60] (y21);
\draw [-triangle 60]  (y21) edge[bend left=60] (y11);
\node(label) at (-6.5,10.5) {\large Optimize over \(c^1\)};
\node(label) at (-6.5,8) {\large Optimize over \(c^2\)};
\node(label) at (-6.5,5.5) {\large Optimize over \(c^3\)};
\draw[-triangle 60](-6.5,10)--(-6.5,8.5);
\draw[-triangle 60](-6.5,7.5)--(-6.5,6);
\draw[-triangle 60](-6.5,5)--(-6.5,3.5);
\node(text) at (-6.5,3) {\(\vdots\)};
\end{tikzpicture}
\caption{Points visited by the bit scaling algorithm in the worst
  case. Black arcs follow via Lemma~\ref{lem:intragroupOrder}, red
arcs via Lemma~\ref{lem:intergroupOrder}.}
\label{fig:worstCase}
\end{center}
\end{figure}

\begin{theorem}\label{thm:lbBS}
  Choose \(k\geq1\) and set \(n \define 8k-2\). Let
  \(P_n=\conv{\face{y^1,\dots,y^{2k}}}\) be the polytope and \(c^p\) for some \(p\geq1\) the objective function as constructed
  above. Then the bit scaling algorithm optimizing \(c^p\) over \(P_n\) requires \(\Omega(n\log\mnorm{c^p})\) augmentation steps in the worst case.
\end{theorem}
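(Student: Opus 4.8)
The plan is to exhibit a single adversarial run of Algorithm~\ref{alg:bitScaling} on $P_n$ with objective $c^p$, starting from a carefully chosen initial solution, that performs $(2k-1) + (p-1)k = \Omega(pk)$ augmentations, and then to rewrite this as $\Omega(n\log\mnorm{c^p})$ using $n = 8k-2$ together with the already-noted fact that $\log\mnorm{c^p}\in\Theta(p)$ for $p>1$.

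First I would pin down the phase structure. From $c^\ell = 2c^{\ell-1}+d^\ell$ and $c^0 = 0$ we get $c^p = \sum_{\ell=1}^{p} 2^{p-\ell} d^\ell$ with $d^\ell\in\{0,1\}^n$, so $\mnorm{c^p}\in\Theta(2^p)$ and $\ceil{\log(\mnorm{c^p}+1)} = p+O(1)$. More importantly, for $\mu = 2^{q}$ the scaled objective is $\floor{c^p/\mu} = \sum_{\ell=1}^{p-q} 2^{(p-q)-\ell} d^\ell = c^{p-q}$, because the discarded tail $\sum_{\ell=p-q+1}^{p} 2^{p-\ell-q} d^\ell_i \le 1-2^{-q} < 1$ in every coordinate $i$. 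Hence as $\mu$ runs over $2^{\ceil{\log C}},\dots,2,1$, the phase with $\mu = 2^{p-\ell}$ optimizes exactly $c^\ell$, and there are exactly $p$ nontrivial phases (those with $\mu\ge 2^p$ see a zero objective and do nothing); call them phases $1,\dots,p$.

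Next I would describe the walk. Lemma~\ref{lem:intragroupOrder} says that for every $\ell$ the chain $y^1,\dots,y^k$ is strictly $c^\ell$-decreasing with unit steps, and likewise $y^{k+1},\dots,y^{2k}$; Lemma~\ref{lem:intergroupOrder} then glues the two chains with a unit step whose placement depends on parity: if $\ell$ is odd the $c^\ell$-order is $y^1 > \dots > y^k > y^{k+1} > \dots > y^{2k}$, and if $\ell$ is even it is $y^{k+1} > \dots > y^{2k} > y^1 > \dots > y^k$. In either case all $2k$ points are totally ordered by $c^\ell$ with consecutive values differing by exactly $1$, so moving a point to its predecessor in this order is always a valid augmentation for the (integral) scaled objective, and stopping is forced only at the unique maximizer. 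Now start at $x^0 = y^{2k}$: in phase $1$ ($c^1$, odd) $y^{2k}$ is the global minimizer, so the oracle may walk through all $2k$ points up to $y^1$, i.e.\ $2k-1$ augmentations. In every later phase $\ell\ge 2$ the inherited solution is $y^1$ if phase $\ell-1$ was odd and $y^{k+1}$ if it was even; by the parity flip in Lemma~\ref{lem:intergroupOrder} this inherited point lies exactly $k$ arrows below the current maximizer ($y^{k+1}$ resp.\ $y^1$) in the $c^\ell$-order, so the oracle can make $k$ more augmentations, ending at that maximizer (with parity flipped for the next phase). Summing: $(2k-1) + (p-1)k = (p+1)k - 1$ augmentations in total.

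Finally, $n = 8k-2$ gives $k = \Theta(n)$ and $\log\mnorm{c^p} = \Theta(p)$, so the run performs $\Omega(pk) = \Omega(n\log\mnorm{c^p})$ augmentations, proving the claim. I expect the only real obstacle to be the bookkeeping in the second step: one must track, phase by phase, which of $y^1$ or $y^{k+1}$ is carried over and verify — combining the \emph{parity-dependent} linking of Lemma~\ref{lem:intergroupOrder} with the two intragroup chains — that this carried-over point is always exactly $k$ augmentations below the current $c^\ell$-maximizer, so that no phase is "short." The remaining ingredients (the floor-division identity $\floor{c^p/2^q} = c^{p-q}$ and the $\Theta(p)$ count of phases) are routine.
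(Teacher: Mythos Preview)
Your proposal is correct and follows essentially the same approach as the paper: you use Lemmas~\ref{lem:intragroupOrder} and~\ref{lem:intergroupOrder} to establish the total $c^\ell$-ordering of the $2k$ points in each phase, then exhibit an adversarial augmentation sequence that visits $2k-1$ points in phase~$1$ and $k$ points in each subsequent phase, and finally convert the $\Omega(pk)$ count to $\Omega(n\log\mnorm{c^p})$. Your treatment is in fact slightly more explicit than the paper's in justifying the phase structure via the floor-division identity $\floor{c^p/2^q} = c^{p-q}$, which the paper leaves implicit in the phrase ``by construction of $c^p$.''
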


\begin{proof}
  By construction of \(c^p\), the bit scaling algorithm optimizes over \(c^1,c^2, \dots, c^p\) in successive scaling phases. The algorithm begins by optimizing over \(c^1\). By Lemmas~\ref{lem:intragroupOrder} and \ref{lem:intergroupOrder} we have
  \[
  c^1y^{2k}<c^1y^{2k-1}<\cdots<c^1y^1.
  \]
  Since an augmentation step moves to any point with improving cost, the algorithm may be forced to visit all \(2k\) points when optimizing over
  \(c^1\).

  For \(\ell \geq 2\) and \(\ell\) even, \(y^1\) maximizes
  \(c^{\ell-1}\) over \(P_n\) and
  \[
  c^\ell y^1<c^\ell y^{2k}<c^\ell y^{2k-1}<\cdots<c^\ell y^{k+1},
  \]
  so the bit scaling algorithm may visit all \(k\) points in
  \(\face{y^{k+1},\dots,y^{2k}}\) in the \(\ell\)th scaling phase. Similarly, for \(\ell \geq 2\) and \(\ell\) odd,
  \(y^{k+1}\) maximizes \(c^{\ell-1}\) over \(P_n\) and
  \[
  c^\ell y^{k+1}<c^\ell y^k<c^\ell y^{k-1}<\cdots<c^\ell y^1,
  \]
  so the algorithm may visit all \(k\) points in \(\face{y^1,\dots,y^k}\). Thus,
  for \(\ell\in\{ 1, \dots, p\}\), at least \(k\) augmentations may be necessary to optimize over \(c^\ell\). As \(p=\ceil{\log\mnorm{c^p}}\), this gives a total number of (at least)
  \[ 
  k\ceil{\log\mnorm{c^p}}=\frac{n+2}{8}\ceil{\log\mnorm{c^p}}\in\Omega(n\log\mnorm{c^p})
  \]
  augmentations necessary over the entire algorithm.
\end{proof}

Observe that in the example we have constructed above, we revisit the
points \(y^j\) several times, which leads to the high worst-case number of augmentations. However, given the same polytope/cost vector pair \((P_n,c^p)\), geometric scaling behaves differently. In particular, as shown in Observation~\ref{obs:GSnoRevisit}, the geometric scaling algorithm never revisits a point. Thus the number of augmentations necessary for geometric scaling is bounded by the number of vertices of \(P_n\), which is~\(\order{n}\). With a suitable choice of \(p\)  (recall \(\log\mnorm{c^p}\approx p\)), the number of augmentations calculated by the two methods can have an arbitrarily high difference. We thus have the following corollary.

\begin{corollary}\label{cor:bitscale:worstcase}
  For any \(p \geq 1\),
  there exists a polytope \(P \subseteq \cube{n}\)
  with \(n=8k+2\), \(k\in\Z_{>0}\)
  and an objective function \(c = c^p\), so that bit scaling computes
  \(\Omega(n \log\mnorm{c^p})=\Omega(np)\)
  augmenting directions in the worst case, while geometric scaling
  needs \(\order{n}\)
  augmenting directions. In particular, the relative difference can be 
  made arbitrarily large by choosing \(p\) appropriately.
\end{corollary}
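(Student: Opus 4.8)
The plan is to take the instance family from Section~\ref{sec:worst-case-examples} itself and pit the bit-scaling lower bound already in hand against the ``never revisits a point'' property of geometric scaling. Fix $p \geq 1$, pick any $k \in \Z_{>0}$, set $n$ as in the construction, and let $P = P_n$ and $c = c^p$ be the polytope and cost vector of Sections~\ref{sec:constr-polyt} and~\ref{sec:constr-cost-vect}. Theorem~\ref{thm:lbBS} already delivers the bit-scaling side: in the worst case bit scaling performs $\Omega(n \log\mnorm{c^p})$ augmenting steps. Since the construction is designed so that $\log\mnorm{c^p} = \Theta(p)$ (as remarked right after the definition of the cost vector), this is $\Omega(np)$.

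For the geometric-scaling side the first step is to show that $P_n$ has only $2k = \order{n}$ integral points, namely $P_n \cap \Z^n = \face{y^1,\dots,y^{2k}}$. Write an arbitrary point of $P_n$ as $x = \sum_{j=1}^{2k}\lambda_j y^j$ with $\lambda$ in the simplex. By the definition of the third and fourth coordinate blocks, the block-$3$ part of $x$ is $\bigl(\sum_{j\le k}\lambda_j\bigr)\ones$ and the block-$4$ part is $\bigl(\sum_{j>k}\lambda_j\bigr)\ones$; integrality of $x$ forces $\sum_{j\le k}\lambda_j \in \binSet$, so $\lambda$ is supported on $\face{1,\dots,k}$ or on $\face{k+1,\dots,2k}$. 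Assume the former (the latter is symmetric). By the staircase definition $y^{j,1}_i = 1$ exactly when $i \geq j$, the $i$-th entry of the block-$1$ part of $x$ equals the prefix sum $\lambda_1 + \dots + \lambda_i$. These prefix sums are nondecreasing and lie in $[0,1]$, so they can all lie in $\binSet$ only if $\lambda$ is a unit vector; hence $x \in \face{y^1,\dots,y^k}$. This establishes $\card{P_n \cap \Z^n} = 2k$.

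Now the geometric-scaling bound follows at once. Every iterate of Algorithm~\ref{alg:mraScaling} is an integral point of $P_n$, and by Observation~\ref{obs:GSnoRevisit} the $c^p$-values of successive iterates strictly increase, so no point is visited twice; therefore the algorithm makes at most $\card{P_n\cap\Z^n} - 1 = 2k-1 \in \order{n}$ augmenting steps, no matter what the augmentation oracle returns. Dividing the two bounds, the worst-case ratio between the number of bit-scaling and geometric-scaling augmentations is $\Omega(\log\mnorm{c^p}) = \Omega(p)$, which grows without bound as $p \to \infty$ while $n$ stays fixed; this is precisely the assertion of the corollary.

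The only genuinely new ingredient is the integral-point count in the second paragraph; everything else is assembled from Theorem~\ref{thm:lbBS} and Observation~\ref{obs:GSnoRevisit}. I expect that count to be the main (but mild) obstacle: one must check that taking the convex hull of the $2k$ staircase points creates no spurious lattice points, which is exactly the prefix-sum remark above. A purely cosmetic point is to reconcile the dimension bookkeeping between Theorem~\ref{thm:lbBS} ($n = 8k-2$) and the corollary statement ($n = 8k+2$); this only shifts $k$ by a constant and affects none of the $\Theta(\cdot)$ or $\order{\cdot}$ estimates.
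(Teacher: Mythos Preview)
Your proof is correct and follows the same line as the paper: Theorem~\ref{thm:lbBS} for the bit-scaling lower bound, Observation~\ref{obs:GSnoRevisit} plus a finite vertex count for the geometric-scaling upper bound. The one place you do more work than the paper is the prefix-sum argument showing $P_n\cap\Z^n=\{y^1,\dots,y^{2k}\}$; this is unnecessary, because $P_n\subseteq\cube{n}$ forces every integral point of $P_n$ to be a $0/1$ point, hence (by Observation~\ref{obs:01exhaust}) a vertex of $P_n$, and $P_n=\conv{\{y^1,\dots,y^{2k}\}}$ has at most $2k$ vertices by construction. Your remark on the $8k-2$ versus $8k+2$ discrepancy is apt and harmless.
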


Theorem~\ref{thm:lbBS} is particularly
interesting as it shows that the number of required augmentations for the bit scaling
algorithm is unbounded for 0/1 polytopes. On the other hand, the rounding scheme of \cite{frank1987application} can be used to turn an arbitrary \(c \in \Q^n\) into a vector \(\bar c \in \Z^n\) with encoding length \(O(n^3)\) in time polynomial in \(n\) and \(\log \mnorm{c}\) such that optimizing both vectors results in the same optimal solution. Thus, bit scaling requires at most \(O(n^4)\) augmentations in the worst-case \emph{with} preprocessing of the objective function. We obtain the same worst-case bound on the number of augmentations for geometric scaling.  

\section{Implementation}
\label{sec:implementation}

We implemented the discussed algorithms bit scaling
(Algorithm~\ref{alg:bitScaling}), MRA (Algorithm~\ref{alg:mra}), and geometric scaling
(Algorithm~\ref{alg:mraScaling}) in C using
the framework SCIP, see~\cite{Ach2009,SCIP}. We also implemented a simple
augmentation algorithm (``augment'') that iteratively searches for
augmenting directions. All methods run for arbitrary mixed-integer problems
as described in the following sections. Generally, for each instance we run
presolving and solve the root node, including cuts. We then start running
the described algorithms, but keep all cuts and solutions found by
heuristics so far, including those found during the augmentation iterations.

\subsection{Solving the augmentation problems}

The augmentation problem is solved as a MIP. In general, we add an
objective cut \(c x \geq c x^k + \delta\) with respect to the current
objective~\(c\) and last feasible iteration point~\(x^k\). We use \(\delta
=\ceil{2 \varepsilon \cdot \abs{c x^k}}\) if the objective is
\emph{integral}, i.e., is guaranteed to yield integral values for all
feasible solutions; we set \(\delta = 2 \varepsilon\cdot \abs{c x^k}\)
otherwise. Here \(\varepsilon = 10^{-6}\) is the feasibility tolerance of
SCIP. (The first solution is found without adding this constraint.)  Note
that continuous variables do not need to be treated differently, i.e., the
approach works for arbitrary MIPs.

We then solve the MIP subproblem until we find an improving
solution~\(x^{k+1}\). For any such solution, we try to exhaust the
direction, by searching for the largest integral \(\alpha\) such that \(x^k
+ \alpha(x^{k+1} - x^k)\) is feasible.

The search for improving solutions can be incomplete: We first solve the
root node of the subproblem and check whether we found an improving solution. If yes, we use
this solution as an augmentation direction. Otherwise, we continue to solve
the MIP until we find any feasible solution. It often happens that soon
after finding some feasible solution, further solutions are found, e.g.,
by so-called exchange heuristics like 1-opt or crossover. We therefore
continue the solution process
until for a fixed number of nodes no further improving solution is found or the
problem has been solved (this is called a ``stall node limit'' in SCIP).

Note that the last iteration has to be solved to optimality in all
algorithms.

\subsection{Augment}
\label{sec:impl-augment}

For the basic augmentation method we proceed as follows: at each iteration
with current best solution~\(x^k\), we add an objective cut. We then
iteratively search for improving solutions until we prove infeasibility (in
this case, \(x^k\) is optimal) or hit the time limit. Note that we solve
the same problem as the original with an additional constraint. Since this
constraint does not cut off any better solution, the dual bound obtained in
each subproblem is valid for the original.

\subsection{Bit scaling}
\label{sec:impl-bitscaling}

\begin{algorithm}[tb]
  \caption{\label{alg:bitScalingMip}Bit Scaling Variant}
  \textbf{Input: } Feasible solution \(x_0\)\\
  \textbf{Output: } Optimal solution for \(\max \set{cx}{x \in P \cap \Z^n}\)
  \begin{algorithmic}
    \State \(\mu \leftarrow 2^{\ceil{\log C}}\)
    \Repeat
    \State \textbf{set} \(c_0 \leftarrow \floor{c / \mu}\)
    \State \textbf{compute} \(x_0 \in P\) integral with \(c_0 x_0 = \max\face{ cx
      \mid x \in P_I}\)
    \State \(\mu \leftarrow \mu/2\)
    \Until{\(\mu < 1\)}
    \State \Return \(x_0\) \Comment{return optimal solution}
  \end{algorithmic}
\end{algorithm}

In the bit scaling Algorithm~\ref{alg:bitScaling}, at each iteration, the
problem is solved with the scaled objective function under the additional
constraint that a solution must improve on the current one. The scaling
factor is changed if no improving solution exists. In Algorithm
\ref{alg:bitScalingMip}, there is no additional constraint, but the optimal
solution is computed at each iteration, rather than an improving solution;
therefore, the scaling factor changes at each iteration. We have implemented
both algorithms, as well as a variant of Algorithm \ref{alg:bitScalingMip}
with an improving constraint (similar to Algorithm
\ref{alg:bitScaling}). This may alter the behavior of the MIP solver, but
not the behavior of the algorithm itself.

Let us now point out further implementation issues. At the beginning of the
algorithms, the objective function is replaced with the scaled version.  In
practice, the coefficients of the objective function may not be integer.
Thus, an additional scaling may be needed to make the objective integral.
For the variants where an improving constraint is used, such a constraint
is added at the beginning of an iteration if a feasible solution is known.
Between two iterations, we compare the objective functions and only solve
the next iteration if the vectors are not equal up to a factor. Depending
on the algorithm, we solve the iteration problem to optimality or use an incomplete
search, as explained above. A new scaling
factor is computed before each new iteration if the solution of the current
iteration is optimal for the current objective function (note that this is
systematically the case for Algorithm \ref{alg:bitScalingMip} and its
variant).

\subsection{Geometric scaling}
\label{sec:Implementation:Geometric}

Geometric scaling is implemented as described in
Algorithm~\ref{alg:mraScaling}, using the function
\begin{equation}\label{eq:practicalPotential}
  \rho(\tilde{x}, x - \tilde{x}) \define \onorm{x - \tilde{x}},
\end{equation}
i.e., \(\rho(\tilde{x},z) = \onorm{z}\). Note that for 0/1 problems this
function is equal to \(\abs{\supp(x - \tilde{x})}\), which is a potential
function, see Section~\ref{sec:geometric-scaling}. For general integer
variables, \(\rho\) does not fulfill Part 1 of
Definition~\ref{def:potential}. We nevertheless use this function, since it
induces sparsity, is easy to handle, and also can be used for the MRA algorithm (see
Section~\ref{sec:Implementation:MRA}). Note that for general integer
variables, we need to add artificial (continuous) variables that model the positive
and negative parts.

Deviating from Algorithm~\ref{alg:mraScaling}, we start
with \(\mu\) equal to the smallest power of 2 larger than the value of
any previously found solution; moreover, we make sure that the value of
\(\mu\) is not larger than \(10^8\). If the objective is not integral, we may
need to solve one final problem, if \(\mu < 1 / n\). For each found
solution, we try to exhaust the direction as described above.

\subsection{Primal heuristic based on geometric scaling}
\label{sec:Implementation:GeometricHeuristic}

It will turn out in our computational results that geometric scaling in
general performs quite well. This motivates the implementation of a primal
heuristic based on it. This heuristic is activated during an ordinary
branch-and-cut run after a primal solution was found and for a certain
number of nodes (by default 200) no further solution was found. It then
runs the geometric scaling algorithm, but with a node limit for the
individual subproblems that depends on the current number of nodes~\(N\). By
default, we use \(\min\{500, \max\{5000, 0.1\, N\}\}\). We also stop if the
total number of nodes exceeds \(0.6\, N\). In this way, the effort spent
in this heuristic is limited, and one can still benefit from solutions found
during the ordinary tree search.

\subsection{MRA}
\label{sec:Implementation:MRA}

The implementation of MRA (Algorithm~\ref{alg:mra}) is based on
\(\rho(\tilde{x}, x - \tilde{x}) = \onorm{x - \tilde{x}}\), as well. Note
that this function is convex in~\(x\). We then want to solve
\[
\max \face{ \frac{c(x - \tilde{x})}{\rho(\tilde{x}, x - \tilde{x})}
  \mid c(x - \tilde{x}) > 0,\; x \in P,\; x \text{ integral}}
\]
where \(\tilde{x}\) is some feasible solution. To solve this fractional
program, we introduce a parameter \(\mu \geq 0\) and check whether \(c(x -
\tilde{x}) \geq \mu\cdot \rho(\tilde{x}, x - \tilde{x})\) by maximizing \(c(x
- \tilde{x}) - \mu\cdot \rho(\tilde{x}, x - \tilde{x})\), which is concave
in~\(x\). We then perform a binary search over \(\mu\), increasing \(\mu\)
if the objective value is positive and decreasing \(\mu\) otherwise. To solve the inner
optimization problem of maximizing \(c(x - \tilde{x}) - \mu\,
\rho(\tilde{x}, x - \tilde{x})\), we rewrite it as
\[
\max \face{ cx - \tau \mid \tau \geq c\tilde{x} + \mu\cdot \rho(\tilde{x}, x - \tilde{x}),\;
c(x - \tilde{x}) > 0,\; x \in P,\; x \text{ integral}}.
\]
(Note that: \(cx - \tau \leq cx - c\tilde{x} - \mu\cdot \rho(\tilde{x}, x - \tilde{x}) = c(x -
\tilde{x}) - \mu\cdot \rho(\tilde{x}, x - \tilde{x})\) for all feasible~\(x\).)

We solve this problem by iteratively generating subgradients for the
convex function
\[
f_\mu(x) \define c\tilde{x} + \mu\cdot \rho(\tilde{x}, x - \tilde{x}).
\]
Its subdifferential is
\[
\partial f_\mu(x) = c - \mu\, \partial \onorm{\cdot}(x - \tilde{x}) = c - \mu \sgn(x - \tilde{x}),
\]
where we define the set of vectors
\[
\sgn(x)_j \define
\begin{cases}
  \{1\} & \text{if } x_j > 0\\
  [-1,1] & \text{if } x_j = 0\\
  \{-1\} & \text{if } x_j < 0
\end{cases}
\qquad\text{ for all }j = 1, \dots, n.
\]
For each subgradient \(h \in \partial f_\mu(\tilde{x})\), we obtain the
subgradient inequality
\[
f_\mu(x) \geq f_\mu(\tilde{x}) + h(x - \tilde{x}).
\]
Assuming that we have generated subgradients \(h^1, \dots, h^k\) for points
\(x_1, \dots, x_k\), we solve
\begin{equation}\label{eq:MRAinner}
\max \face{ cx - \tau \mid \tau \geq f_\mu(x_i) + h^i(x - x_i),\; i = 1,
  \dots, k,\; c(x - x_0) > 0,\; x \in P,\; x \text{ integral}}.
\end{equation}
Let the optimal solution be~\(x^{k+1}\). We then compute a subgradient at
\(x^{k+1}\) and check whether its subgradient inequality is violated. Note
that each solution of~\eqref{eq:MRAinner} is a primal solution that can be stored and used for the
original problem.

[In our implementation, we actually solve the minimization version
\[
\min \face{ -cx + \tau \mid \tau - h^i x \geq f_\mu(x_i) - h^i x_i,\; i = 1,
  \dots, k,\; c(x - x_0) > 0,\; x \in P,\; x \text{ integral}},
\]
for technical reasons.]

In the first iteration, the problem is unbounded, since \(\tau\) is
unbounded. We therefore start with~\(x^0\) (which is actually infeasible)
and take the subgradient \(h^0 = c \in \partial f_\mu(x^0)\). The
inequality added to the optimization problem is then
\[
\tau \geq c x^0 + c(x - x^0) = c x.
\]
Thus, \(\tau\) is bounded from below, if there exists a feasible \(x \in
P\), \(c(x - x^0) > 0\), \(x\) integral.

\section{Computational results}
\label{sec:comp-results}

The algorithms were tested on a Linux cluster with 3.2 GHz Intel i3
processors with 8 GB of main memory and 4 MB of cache, running a single
process at a time. We use SCIP 3.2.0 and CPLEX 12.6.1 as the LP-solver.
SCIP runs with default settings, except that we turn off the ``components''
presolver, since it would decompose the problem into several runs, making a
comparison more difficult.

We use the following testsets:
\begin{description}
\item[MIPLIB2010] The 87 benchmark instances from MIPLIB 2010\footnote{available at
    \url{http://miplib.zib.de/}}, see~\cite{Kochetal2011}.
\item[LB] We use the testset of 29 instances from the ``local branching''
  paper\footnote{available at
    \url{http://www.or.deis.unibo.it/research_pages/ORinstances/MIPs.html}},
  see ~\cite{FisL03}. This testset has also been used
  in~\cite{HanMU06}. The latter paper also contains improved results, which
  we will use below.
\item[QUBO] We use a testset of linearizations of 50 instances for quadratically unconstrained Boolean
  optimization (QUBO)\footnote{available at
    \url{http://researcher.watson.ibm.com/researcher/files/us-sanjeebd/chimera-data.zip}},
  see \cite{dash2013note,dash2015optima}.
\end{description}

In an online supplement, we present details of the computations described
in the following. We will first discuss results on the MIPLIB2010 test
set. As it turns out, augmentation methods do not help to solve these
instances, essentially because they are too easy. We then consider the very
hard testsets LB and QUBO. Here, it will turn out that augmentation methods
significantly improve on the default settings and produce primal solutions
of very good quality.

\subsection{Testset MIPLIB 2010}

\begin{table}
  \caption{Aggregated results of the different algorithms on testset MIPLIB 2010 (1 hour time limit, 87 instances)}
  \label{tab:AggregatedMIPLIB2010}
  \begin{scriptsize}
    \sffamily
    \begin{tabular*}{\textwidth}{@{\extracolsep{\fill}}lrrrrrrrrr@{}}
      \toprule
      name      &  \#nodes &     time &\#run &\#best &\#improv. & \#subprob. & \#phases & \#exhaust & prim-$\int$\\
      \midrule
      bitscale              &  20116.2 &   934.93 & 59 & 67 &   3.7 &   8.4 &   4.8 &   0.0 &     57.4\\
      MRA                   &  16434.8 &  1819.46 & 83 & 32 & 411.4 & 428.6 & 418.7 &   8.1 &    207.2\\
      geometric             &   5628.8 &  1632.31 & 83 & 65 &   6.2 &  23.6 &  17.4 &   0.0 &     49.8\\
      augment               &  14793.5 &   924.71 & 83 & 63 &  12.9 &  12.9 &  12.9 &   0.0 &     54.5\\
      bitscale-classic      &  25086.0 &  1120.62 & 59 & 62 &   4.1 &  11.0 &   6.9 &   0.0 &     60.4\\
      bitscale-noimprove    &  20343.4 &   918.31 & 60 & 69 &   4.2 &   8.8 &   4.6 &   0.0 &     56.7\\
      bitscale-complete     &  26902.6 &  1070.71 & 59 & 59 &   2.0 &   6.6 &   4.6 &   0.0 &    103.5\\
      geom-no $\ell_1$      &   6038.5 &  1651.26 & 83 & 65 &   6.4 &  23.6 &  17.2 &   0.0 &     49.5\\
      geom-no cutoff        &   7648.7 &  2062.03 & 83 & 49 &   3.9 &  19.4 &  15.6 &   0.0 &     85.4\\
      geom-8                &   8637.7 &  1313.55 & 83 & 69 &   5.7 &  12.8 &   7.1 &   0.0 &     40.7\\
      geom-64               &   8680.9 &  1122.85 & 83 & 72 &   6.4 &  10.7 &   4.3 &   0.0 &     39.7\\
      geom-256              &   8358.2 &  1128.40 & 83 & 69 &   7.1 &  10.9 &   3.7 &   0.0 &     39.7\\
      geom-512              &   9895.4 &  1112.03 & 83 & 69 &   7.0 &  10.3 &   3.3 &   0.0 &     41.8\\
      geom-1024             &   8392.5 &  1016.80 & 83 & 71 &   7.1 &  10.3 &   3.2 &   0.0 &     39.8\\
      \midrule
      geom-heur             &  16858.8 &   741.52 & 68 & 71 &   1.3 &  33.6 &  32.4 &   0.0 &     30.4\\
      geom-infer            &  21343.6 &   732.30 & 68 & 72 &   1.0 &  41.7 &  40.7 &   0.0 &     30.1\\
      geom-heur-64          &  16158.0 &   683.18 & 68 & 73 &   1.7 &   9.9 &   8.1 &   0.0 &     29.3\\
      \midrule
      default               &  15495.9 &   557.12 &  0 & 74 &   0.0 &   0.0 &   0.0 &   0.0 &     26.3\\
      \bottomrule
    \end{tabular*}
  \end{scriptsize}
\end{table}

Table~\ref{tab:AggregatedMIPLIB2010} shows a comparison of the four
different augmentation methods and variants of these as well as for the
default settings on the testset MIPLIB2010. In the table, ``\#nodes'' and
``time'' give the shifted geometric means\footnote{The shifted geometric
  mean of values \(t_1, \dots, t_n\) is defined as \(\big(\prod(t_i +
  s)\big)^{1/n} - s\) with shift \(s\). We use a shift \(s = 10\) for time
  and \(s = 100\) for nodes in order to decrease the strong influence of
  the very easy instances in the mean values.} of the total number of nodes
(including subproblems) and the time (in seconds), respectively. Column
``\#run'' presents the number of instances for which an augmentation
routine ran. Column ``\#best'' refers to the number of times the best known
primal solution value has been found. With respect to the augmentation
methods, the columns ``\#improv.'', ``\#subprob.'', ``\#phases'', and
``\#exhaust'' refer to the average number of times an improved primal
solution has been found, the number of subproblems (MIPs) solved, the
number of phases, and the number of exhausting directions found,
respectively. The number of phases refers to the number of subproblems
solved with the same value of \(\mu\) for bit and geometric scaling (in
this case, \(\text{\#phases} + \text{\#improv} = \text{\#subprob}\)); note
that we count a possible search for the first primal solution as one
phase. For MRA, we count each outer iteration as a phase; thus, the number
of phases equals the number of Benders problems~\eqref{eq:MRAobj}
solved. For \textsf{augment}, the number of phases equals the number of
improving solutions and the number of subproblems.

Finally, the last column gives the \emph{primal integral},
see~\cite{Berthold13}. The primal integral is the value we obtain by
integrating the gap between the current primal and best primal bound over
time\footnote{We define the gap between primal bound~\(p\) and best primal
  bound~\(b\) as \(\abs{p-b}/\max(\abs{p},\abs{b})\).}. Thus, a smaller primal
integral indicates a higher solution quality over time.

We can draw several conclusions from these experiments:

\paragraph{General Observations}

Among the 87 instances, four were solved before the end of the root node
and no augmentation routine was applied.

The number of phases and subproblems is usually below 30, with the
exception of \textsf{MRA}, which needs a larger number of phases and
subproblems. Thus, it seems that in practice no long series of
augmentation steps occur, but as the example of MRA shows, this would in
principle be possible. Note, however, that the numbers are significantly
smaller than the theoretical bounds.

The goal of the table is to illustrate the behavior of the augmentation
procedures. However, we also add the results of the default settings for
comparison. It turns out---as possibly expected---that these settings are
faster and have a smaller primal integral than all stand-alone augmentation
procedures on average.

\paragraph{Bitscaling}
Bitscaling only runs on 59 of the 87 instances, since the
other instances have equal nonzero objective coefficients. Note
that this somewhat reduces the corresponding averages, since the default
settings are often faster.

We compare four variants of bit scaling (\textsf{bitscale},
\textsf{bitscale-classic}, \textsf{bitscale-noimprove}, and
\textsf{bitscale-complete}). The basic variant (\textsf{bitscale}) uses
incomplete searches and an improving constraint. Recall that when
performing incomplete searches, only some improving solution is searched
for at each iteration, as opposed to the optimal solution. The objective
function changes at each iteration.

The \textsf{classic} variant corresponds to Algorithm \ref{alg:bitScaling}:
each improving solution requires a solve, and the objective function is
scaled only when the problem is infeasible (due to the improving
constraint).  This explains why this variant has the highest number of
phases among all bit scaling variants. The \textsf{classic} variant
outperformed the \textsf{bitscale} variant on only three instances out of
the 59.

The \textsf{noimprove} variant is similar to \textsf{bitscale}, except that
no improving constraint is added. The results are very close to
\textsf{bitscale}, maybe slightly better. Depending on the instance one variant can
be significantly faster than the other: indeed, \textsf{bitscale} can be
3.5 times as fast as \textsf{noimprove}, but also two times as slow. Note,
however, that this might be the result of performance variability
(see~\cite{Kochetal2011}).

In the \textsf{complete} variant, each phase is solved to optimality.
This variant thus requires fewer phases than \textsf{bitscale}.
However, on this testset, the \textsf{complete} variant is never faster
than \textsf{bitscale} and can perform up to 4.8 times slower.

Let us now compare the default \textsf{bitscale} variant to default
SCIP. It is faster on 10 instances out of the 59 on which \textsf{bitscale}
runs. While this shows that in some cases using bit scaling can be
beneficial, default SCIP performs overall significantly better.

\paragraph{MRA}
For MRA, the difference between the number of subproblems and the number of
phases is surprisingly small. This indicates that only very few subgradients are
needed in MRA to compute the optimal inner value in~\eqref{eq:MRAinner}: on
average at most two subgradients are added in each phase. Note also that MRA generates a large number of improving
solutions. Obviously, MRA takes a disadvantageous route through the feasible
solutions.

When comparing the different variants, MRA is clearly the slowest, solves
the fewest number of instances, and uses the largest number of phases. We
currently do not have an explanation for this large difference.

Interestingly, MRA is the only variant for which the exhausting step
actually was performed. Obviously the solutions produced by the other
variants are always automatically exhaustive.

\paragraph{Geometric Scaling}
Next to the default settings, variant \textsf{geometric} solves the largest
number of instances. It is faster than the default for two
instances. However, it uses quite a number of phases and subproblems
without finding an improving solution, in particular at the beginning.

It turns out that not using the \(\ell_1\)-norm on general integer
variables (variant \textsf{geom-no \(\ell_1\)}), i.e., we ignore these variables in the
objective function, does not make a difference. This can be explained by
the fact that on average there are only a few general integer variables.

Interestingly, adding an objective constraint instead of using an
objective cutoff (variant \textsf{geom-no cutoff}), worsens the
results significantly. This might be due to the fact that SCIP stores
suboptimal solutions and uses them to generate better ones, while
infeasible solutions are not stored.

Finally, we consider different factors to update \(\mu\) in
Algorithm~\ref{alg:mraScaling} (the default factor is 2). When increasing
this factor, the running time generally decreases. The largest number of
solutions with objective equal to the best value and the smallest primal
integral appear for a factor of 64. The corresponding variant
\textsf{geom-64} is better than the default for three instances.

\paragraph{Augment}
Compared to the other variants, \textsf{augment} is surprisingly fast. It
also usually takes few phases, but produces more improving solutions
(with the exception of MRA). However, \textsf{bitscale} is not far
behind. Moreover, most geometric scaling variants produce better solutions
(\#best) and smaller primal integrals, but they use more time on average.

\paragraph{Geometric scaling heuristic}
We also tested the heuristic based on geometric scaling (see
Section~\ref{sec:Implementation:GeometricHeuristic}), with the following
settings: \textsf{geom-heur}, \textsf{geom-heur-infer}, and
\textsf{geom-heur-64}. Here, \textsf{geom-heur-infer} uses inference
branching instead of the default branching rule, which should lead to
reduced times for the branching rules. Moreover, \textsf{geom-heur-64} uses a
factor of~64 to reduce \(\mu\), since this produced the best results for
geometric scaling.

The running times and number of nodes are larger than the default. If we
only consider the number of nodes in the main branch-and-bound tree on
instances which were solved to optimality, variants \textsf{geom-heur} and
\textsf{geom-heur-64} reduce the number of nodes by about 10\,\% and 13\,\%
relative to the default settings, respectively. However, this improvement is over-compensated by
the overhead incurred by the heuristic. Moreover, because of the node
limits, a larger number of subproblems could be treated, but the number of
improving solutions is smaller relative to the other methods.

In total, we conclude that the heuristic does not help to improve
the performance on the MIPLIB 2010 instances -- essentially, most of these
instances are ``too easy''.
\smallskip

As a general comment, note that the influence of heuristics on the
performance is generally not too large: \cite{Berthold14} estimated the
difference of the running time of SCIP using heuristics and not using any
heuristics to be about 11\,\% on the MIPLIB 2010 benchmark
testset. Moreover, a single heuristic has the disadvantage to ``compete''
against the other heuristics (42 in SCIP -- not all of them active). On the
other hand, the augmentation methods significantly benefit from good
heuristics if an incomplete search is used.

\subsection{LB testset}
   \IfFileExists{results/results-lb.tex}{
      \afterpage{\begin{landscape}
  \begin{table}
    \caption{Best primal values for different variants on the testset LB (29 instances, 1 hour time limit).
      Column ``previous best'' gives the best value obtained in~\cite{FisL03} or~\cite{HanMU06};
      all problems are minimization instances.
      For each instance, the best values among those obtained by the variants (excluding ``previous best'') are marked in black,
      otherwise the values are marked gray.}
    \label{tab:LB}
    \newcommand{\A}[1]{\textcolor[rgb]{0.6,0.6,0.6}{\num{#1}}}   
    \newcommand{\B}[1]{\num{#1}}                                 
    \newcommand{\C}[1]{\textcolor[rgb]{0.6,0.6,0.6}{\num{#1}}} 
    \begin{scriptsize}
      \sffamily
      \begin{tabular*}{\linewidth}{@{\extracolsep{\fill}}lrrrrrrrr@{}}
        \toprule
        problem        &          default  &          augment  &         bitscale  &          geom-64  &        geom-heur  &  geom-heur-infer  &     geom-heur-64  &    previous best \\
        \midrule
        A1C1S1         & \A{     11643.33} & \A{     11989.36} & \A{     11977.50} & \A{     11638.86} & \B{     11557.22} & \A{     11566.59} & \A{     11590.45} & \B{     11551.19}  \\
        A2C1S1         & \A{     10983.28} & \A{     11422.77} & \A{     11115.34} & \A{     11040.72} & \B{     10897.77} & \A{     10994.27} & \A{     10909.95} & \B{     10889.14}  \\
        arki001        & \B{   7580813.05} & \A{   7581527.87} & \B{   7580813.05} & \A{   7582202.93} &              ---  & \A{   7580814.51} & \A{   7580813.05} & \C{   7580889.44} \\
        B1C1S1         & \B{     24798.51} & \A{     25456.98} & \A{     27309.51} & \A{     25458.30} & \A{     25630.75} & \A{     25123.51} & \A{     25042.56} & \B{     24566.52}  \\
        B2C1S1         & \B{     25763.12} & \A{     27253.74} & \A{     26592.19} & \A{     26167.32} & \A{     26412.44} & \A{     25926.61} & \A{     26002.11} & \C{     26073.78} \\
        biella1        & \B{   3065005.78} & \B{   3065005.78} & \B{   3065005.78} & \B{   3065005.78} & \B{   3065005.78} & \B{   3065005.78} & \B{   3065005.78} & \C{   3070810.15} \\
        core2536-691   & \B{       689.00} & \B{       689.00} & \B{       689.00} & \B{       689.00} & \B{       689.00} & \B{       689.00} & \B{       689.00} & \C{       690.00} \\
        core2586-950   & \A{       970.00} & \A{       972.00} & \A{      1213.00} & \A{       971.00} & \B{       955.00} & \A{       960.00} & \A{       966.00} & \B{       947.00}  \\
        core4284-1064  & \A{      1091.00} & \A{      1100.00} & \A{      3279.00} & \A{      1080.00} & \B{      1072.00} & \A{      1073.00} & \A{      1079.00} & \B{      1065.00}  \\
        core4872-1529  & \A{      1580.00} & \A{      1584.00} & \A{      1769.00} & \A{      1579.00} & \B{      1546.00} & \A{      1560.00} & \A{      1575.00} & \B{      1534.00}  \\
        danoint        & \B{        65.67} & \B{        65.67} & \B{        65.67} & \B{        65.67} & \B{        65.67} & \B{        65.67} & \B{        65.67} & \B{        65.67}  \\
        glass4         & \A{1600013500.00} & \A{1500014200.00} & \A{2200016050.00} & \A{1620014440.00} & \B{1500012650.00} & \A{1550012462.72} & \A{1566683416.66} & \B{1400013666.50}  \\
        markshare1     & \B{         7.00} & \A{         9.00} & \A{        32.00} & \A{        12.00} & \A{        10.00} & \A{        10.00} & \A{        10.00} & \B{         7.00}  \\
        markshare2     & \A{        12.00} & \A{        13.00} & \A{       128.00} & \A{        17.00} & \A{        14.00} & \A{        14.00} & \B{        10.00} & \C{        14.00} \\
        mkc            & \A{      -559.11} & \A{      -542.28} & \A{      -557.56} & \A{      -561.93} & \B{      -562.93} & \A{      -560.85} & \A{      -561.33} & \B{      -563.85}  \\
        net12          & \B{       214.00} & \B{       214.00} & \B{       214.00} & \B{       214.00} & \B{       214.00} & \B{       214.00} & \B{       214.00} & \B{       214.00}  \\
        NSR8K          & \A{ 127262743.24} & \A{  68351187.10} & \A{2176184843.46} & \B{  21415513.00} & \A{ 127262743.24} & \A{ 127262743.24} & \A{ 127262743.24} & \B{  20780430.00}  \\
        nsrand\_ipx    & \B{     51200.00} & \A{     54880.00} & \A{     55200.00} & \A{     52000.00} & \B{     51200.00} & \B{     51200.00} & \B{     51200.00} & \C{     51520.00} \\
        rail507        & \B{       174.00} & \B{       174.00} & \B{       174.00} & \B{       174.00} & \B{       174.00} & \B{       174.00} & \B{       174.00} & \B{       174.00}  \\
        roll3000       & \B{     12890.00} & \A{     12899.00} & \A{     13380.00} & \A{     12904.00} & \B{     12890.00} & \B{     12890.00} & \B{     12890.00} & \B{     12890.00}  \\
        seymour        & \A{       425.00} & \A{       425.00} & \A{       425.00} & \B{       424.00} & \B{       424.00} & \A{       425.00} & \B{       424.00} & \B{       423.00}  \\
        sp97ar         & \A{ 663515230.72} & \A{ 726599877.76} & \A{ 674470726.72} & \B{ 662299239.68} & \A{ 674213859.52} & \A{ 664157022.72} & \A{ 673642038.40} & \C{ 666368944.96} \\
        sp97ic         & \A{ 435258209.12} & \A{ 450307285.28} & \B{ 430937067.04} & \A{ 439446697.12} & \A{ 434570609.44} & \A{ 432663431.84} & \A{ 439022248.00} & \B{ 429892049.60}  \\
        sp98ar         & \A{ 530322047.84} & \A{ 551452928.96} & \A{ 532671408.48} & \B{ 530242941.12} & \A{ 530437736.32} & \A{ 530489389.92} & \A{ 530251516.00} & \C{ 530916867.40} \\
        sp98ic         & \A{ 451409231.04} & \A{ 465544414.56} & \A{ 455081136.48} & \A{ 450843038.08} & \A{ 450519098.72} & \B{ 449226843.52} & \A{ 453626659.52} & \B{ 449226843.52}  \\
        swath          & \A{       494.09} & \A{       502.24} & \A{       506.44} & \A{       495.02} & \B{       467.41} & \A{       481.95} & \A{       477.57} & \B{       467.41}  \\
        tr12-30        & \B{    130596.00} & \B{    130596.00} & \A{    139741.00} & \B{    130596.00} & \B{    130596.00} & \B{    130596.00} & \B{    130596.00} & \B{    130596.00}  \\
        UMTS           & \A{  30094335.00} & \A{  30091967.00} & \B{  30091457.00} & \A{  30092333.00} & \A{  30093479.00} & \A{  30092081.00} & \A{  30091738.00} & \C{  30139634.00} \\
        van            & \B{         5.09} & \A{         5.59} & \A{         5.35} & \A{         6.12} & \B{         5.09} & \B{         5.09} & \B{         5.09} & \B{         4.84}  \\
        \midrule
        \#best:        &               13  &                6  &                8  &               10  &               18  &               10  &               11  \\
        \bottomrule
      \end{tabular*}
    \end{scriptsize}
  \end{table}
\end{landscape}
}
   }
   {
   \IfFileExists{results-lb.tex}{
      \afterpage{}
   }{}
   }

In the next experiment, we compare the results of different augmentation
variants on the testset LB, see Table~\ref{tab:LB}. We use default
settings, \textsf{augment}, and \textsf{bitscale}. Moreover, we apply
geometric scaling with a factor of 64 (\textsf{geom-64}), since this
gave the best results on the MIPLIB2010 testset. Moreover, we again use the
three variants of the geometric scaling heuristic (\textsf{geom-heur},
\textsf{geom-heur-infer}, and \textsf{geom-heur-64}).

The general picture of the augmentation methods is similar to the
MIPLIB2010 testsets; for instance, the number of augmentation subproblems
is generally small (see the online supplement for detailed results). The
results show that variant \textsf{geom-64} dominates \textsf{augment} and
\textsf{bitscale} with respect to the number of instances for which the
best solution among all variants was found. Bit scaling ran for 26 of the
29 instances. Due to the higher number of instances for which it is applied
in comparison to the MIPLIB2010 testset, \textsf{bitscale} performs better
than \textsf{augment}.

The default settings perform very favorably with 14 ``best'' solutions
(better than \textsf{geom-64}), but are dominated by \textsf{geom-heur},
which finds the best value for 18 instances. For the LB testset, it seems
to be essential to have access to the solutions generated by other
heuristics and integer feasible LP solutions during the branch-and-cut
algorithm. These can then be improved by \textsf{geom-heur}. Interestingly,
increasing the \(\mu\) reduction factor in the geometric scaling heuristic
to 64 decreases the number of ``best'' solutions to 11 (see
\textsf{geom-heur-64}). Obviously, the decrease of \(\mu\) is too fast in
order to produce good solutions on this testset. In fact, the number of
phases in \textsf{geom-heur} and \textsf{geom-heur-64} is on average much
larger than for \textsf{geom-64} (the averages are 41.9 for \textsf{geom-heur} and 14.3 for
\textsf{geom-heur-64} vs.\ 4.0 for \textsf{geom-64}). Note that
\textsf{geom-heur} runs out of memory for the instance \textsf{arki001}.

Finally, these results are compared to the best values obtained
in~\cite{FisL03} or~\cite{HanMU06} (presented in column ``previous
best''). These values are improved on nine instances by some variant and on
five by \textsf{geom-heur}. Note that this is not even near a fair
comparison, since the results in \cite{FisL03} and~\cite{HanMU06} were
obtained on different computers, as well as with different implementations
and time limits. Moreover, in the meantime most values have been improved
by other methods. Nevertheless, the results show that using geometric
scaling inside a primal heuristic seems to be promising for obtaining high
quality solutions for hard MIPs.

\subsection{QUBO testset}

\begin{table}
  \caption{Best primal values and primal integral of the default settings and variants of the heuristic based on geometric scaling (50 instances, 1 hour time limit).
    For each instance, the best primal values are marked in black,
    otherwise the values are marked gray; all problems are minimization instances.}
  \label{tab:QUBO}
  \newcommand{\A}[1]{\textcolor[rgb]{0.6,0.6,0.6}{\num{#1}}}   
  \newcommand{\B}[1]{\num{#1}}                                 
  \begin{scriptsize}
    \sffamily
    \begin{tabular*}{\linewidth}{@{\extracolsep{\fill}}lrrrrrrrrrr@{}}
      \toprule
      Problem    & \multicolumn{2}{c}{default} & \multicolumn{2}{c}{geom-64} & \multicolumn{2}{c}{geom-heur} & \multicolumn{2}{c}{geom-heur-infer} & \multicolumn{2}{c}{geom-heur-64}\\
                \cmidrule{2-3}\cmidrule{4-5}\cmidrule{6-7}\cmidrule{8-9}\cmidrule{10-11}
                 &    Primal &  Prim-$\int$ &    Primal &  Prim-$\int$ &    Primal &  Prim-$\int$ &    Primal &  Prim-$\int$ &    Primal &  Prim-$\int$\\
      \midrule
      chim8-4.1  & \A{ -796} & \num{ 153.8} & \A{ -822} & \num{  58.9} & \B{ -830} & \num{ 109.9} & \A{ -788} & \num{ 188.3} & \A{ -798} & \num{ 144.9} \\
      chim8-4.2  & \A{ -776} & \num{ 192.6} & \A{ -766} & \num{ 193.5} & \A{ -794} & \num{  60.5} & \A{ -800} & \num{  33.4} & \B{ -806} & \num{  17.8} \\
      chim8-4.3  & \A{ -784} & \num{ 281.5} & \B{ -840} & \num{ 145.7} & \A{ -790} & \num{ 218.7} & \A{ -790} & \num{ 218.6} & \A{ -800} & \num{ 181.7} \\
      chim8-4.4  & \A{ -806} & \num{ 291.1} & \B{ -876} & \num{  24.7} & \A{ -828} & \num{ 203.5} & \A{ -840} & \num{ 160.0} & \A{ -852} & \num{ 107.2} \\
      chim8-4.5  & \A{ -850} & \num{ 208.1} & \B{ -882} & \num{  58.0} & \A{ -840} & \num{ 175.1} & \A{ -828} & \num{ 223.3} & \A{ -852} & \num{ 128.5} \\
      chim8-4.6  & \A{ -790} & \num{ 218.4} & \A{ -798} & \num{ 189.7} & \B{ -840} & \num{ 140.9} & \A{ -830} & \num{ 158.5} & \A{ -836} & \num{ 132.6} \\
      chim8-4.7  & \A{ -756} & \num{ 301.5} & \A{ -810} & \num{ 134.2} & \A{ -802} & \num{ 102.7} & \B{ -824} & \num{ 188.6} & \A{ -806} & \num{  85.3} \\
      chim8-4.8  & \A{ -786} & \num{ 248.0} & \A{ -822} & \num{  26.2} & \A{ -796} & \num{ 125.8} & \B{ -824} & \num{   4.8} & \A{ -814} & \num{  68.7} \\
      chim8-4.9  & \A{ -850} & \num{ 154.6} & \A{ -810} & \num{ 265.7} & \B{ -872} & \num{ 159.9} & \A{ -802} & \num{ 291.8} & \A{ -828} & \num{ 186.1} \\
      chim8-4.10 & \A{ -810} & \num{ 349.5} & \B{ -896} & \num{  17.7} & \A{ -812} & \num{ 341.1} & \A{ -830} & \num{ 269.3} & \A{ -828} & \num{ 280.2} \\
      chim8-4.11 & \A{ -764} & \num{ 154.6} & \A{ -768} & \num{ 113.3} & \A{ -748} & \num{ 200.7} & \B{ -790} & \num{  68.6} & \A{ -730} & \num{ 280.8} \\
      chim8-4.12 & \A{ -746} & \num{ 306.5} & \A{ -774} & \num{ 161.7} & \A{ -786} & \num{ 105.7} & \A{ -780} & \num{ 130.9} & \B{ -808} & \num{   8.2} \\
      chim8-4.13 & \A{ -818} & \num{ 158.7} & \B{ -848} & \num{  17.8} & \A{ -798} & \num{ 218.4} & \A{ -788} & \num{ 259.6} & \A{ -800} & \num{ 210.5} \\
      chim8-4.14 & \A{ -806} & \num{  63.7} & \A{ -770} & \num{ 218.3} & \B{ -818} & \num{  36.4} & \A{ -800} & \num{ 140.5} & \A{ -792} & \num{ 124.6} \\
      chim8-4.15 & \A{ -836} & \num{ 243.4} & \B{ -896} & \num{  14.9} & \A{ -868} & \num{ 115.9} & \A{ -880} & \num{  67.8} & \A{ -868} & \num{ 118.3} \\
      chim8-4.16 & \A{ -834} & \num{ 164.7} & \A{ -852} & \num{  70.1} & \A{ -814} & \num{ 205.4} & \B{ -862} & \num{  42.0} & \A{ -818} & \num{ 194.2} \\
      chim8-4.17 & \A{ -802} & \num{ 102.7} & \A{ -732} & \num{ 376.1} & \A{ -810} & \num{  81.2} & \B{ -816} & \num{ 157.7} & \A{ -796} & \num{ 114.7} \\
      chim8-4.18 & \A{ -856} & \num{  63.4} & \A{ -808} & \num{ 268.0} & \A{ -868} & \num{  15.2} & \B{ -870} & \num{   5.6} & \B{ -870} & \num{  11.8} \\
      chim8-4.19 & \A{ -870} & \num{ 200.2} & \B{ -906} & \num{  35.3} & \A{ -886} & \num{  84.4} & \A{ -876} & \num{ 126.0} & \A{ -866} & \num{ 164.5} \\
      chim8-4.20 & \A{ -818} & \num{ 249.5} & \A{ -874} & \num{  31.1} & \B{ -878} & \num{  51.5} & \A{ -812} & \num{ 274.6} & \A{ -850} & \num{ 120.3} \\
      chim8-4.21 & \A{ -818} & \num{  98.7} & \A{ -816} & \num{ 120.5} & \A{ -836} & \num{  22.3} & \A{ -830} & \num{  47.6} & \B{ -840} & \num{   5.6} \\
      chim8-4.22 & \A{ -816} & \num{ 122.7} & \A{ -834} & \num{  53.3} & \A{ -820} & \num{ 110.9} & \B{ -844} & \num{  58.7} & \A{ -834} & \num{  48.3} \\
      chim8-4.23 & \A{ -780} & \num{ 196.4} & \B{ -824} & \num{  39.6} & \A{ -788} & \num{ 168.6} & \A{ -768} & \num{ 249.4} & \A{ -798} & \num{ 120.2} \\
      chim8-4.24 & \A{ -840} & \num{ 222.8} & \A{ -834} & \num{ 199.5} & \A{ -862} & \num{  87.4} & \A{ -862} & \num{  91.6} & \B{ -880} & \num{ 123.4} \\
      chim8-4.25 & \A{ -880} & \num{  91.2} & \A{ -872} & \num{ 115.8} & \A{ -872} & \num{  80.5} & \A{ -858} & \num{ 136.8} & \B{ -890} & \num{  68.0} \\
      chim8-4.26 & \A{ -796} & \num{ 190.6} & \B{ -838} & \num{  66.1} & \A{ -792} & \num{ 205.4} & \A{ -826} & \num{ 174.8} & \A{ -788} & \num{ 220.9} \\
      chim8-4.27 & \A{ -834} & \num{  54.5} & \A{ -844} & \num{  36.3} & \B{ -846} & \num{  16.0} & \A{ -834} & \num{  57.0} & \A{ -834} & \num{  57.2} \\
      chim8-4.28 & \A{ -782} & \num{  82.4} & \A{ -746} & \num{ 249.8} & \A{ -792} & \num{  34.2} & \A{ -790} & \num{  44.2} & \B{ -798} & \num{  11.4} \\
      chim8-4.29 & \A{ -790} & \num{ 193.3} & \A{ -820} & \num{  74.4} & \A{ -792} & \num{ 186.2} & \B{ -834} & \num{  94.3} & \A{ -810} & \num{ 113.8} \\
      chim8-4.30 & \A{ -842} & \num{ 200.1} & \A{ -808} & \num{ 341.0} & \A{ -870} & \num{  87.4} & \A{ -864} & \num{ 111.5} & \B{ -890} & \num{  42.6} \\
      chim8-4.31 & \A{ -870} & \num{ 137.1} & \A{ -890} & \num{  58.8} & \B{ -900} & \num{  63.3} & \A{ -882} & \num{ 198.9} & \A{ -860} & \num{ 243.8} \\
      chim8-4.32 & \A{ -790} & \num{ 192.8} & \B{ -830} & \num{  11.1} & \A{ -824} & \num{ 144.3} & \A{ -818} & \num{ 131.9} & \A{ -822} & \num{ 160.7} \\
      chim8-4.33 & \A{ -884} & \num{  98.4} & \A{ -830} & \num{ 275.1} & \A{ -878} & \num{  83.5} & \B{ -896} & \num{  51.4} & \A{ -870} & \num{ 110.0} \\
      chim8-4.34 & \B{ -890} & \num{  24.9} & \A{ -882} & \num{  53.5} & \A{ -872} & \num{  80.2} & \A{ -876} & \num{  68.7} & \A{ -860} & \num{ 126.7} \\
      chim8-4.35 & \A{ -782} & \num{  98.6} & \B{ -798} & \num{  84.0} & \A{ -788} & \num{  51.3} & \A{ -774} & \num{ 112.7} & \A{ -794} & \num{  25.1} \\
      chim8-4.36 & \A{ -788} & \num{ 133.1} & \B{ -816} & \num{  13.4} & \A{ -792} & \num{ 111.2} & \A{ -776} & \num{ 180.2} & \A{ -780} & \num{ 164.1} \\
      chim8-4.37 & \A{ -790} & \num{  64.7} & \B{ -798} & \num{  12.1} & \A{ -760} & \num{ 176.5} & \B{ -798} & \num{  72.7} & \B{ -798} & \num{  33.7} \\
      chim8-4.38 & \A{ -806} & \num{ 349.6} & \A{ -796} & \num{ 266.9} & \B{ -856} & \num{ 154.8} & \A{ -792} & \num{ 277.7} & \A{ -840} & \num{ 184.2} \\
      chim8-4.39 & \A{ -856} & \num{  79.6} & \B{ -866} & \num{  20.7} & \A{ -850} & \num{  70.5} & \A{ -846} & \num{  86.8} & \A{ -846} & \num{  87.6} \\
      chim8-4.40 & \A{ -790} & \num{  82.9} & \A{ -760} & \num{ 303.5} & \A{ -788} & \num{  79.8} & \B{ -802} & \num{  71.6} & \A{ -800} & \num{  38.9} \\
      chim8-4.41 & \A{ -880} & \num{ 209.0} & \B{ -890} & \num{  36.7} & \A{ -846} & \num{ 185.6} & \A{ -836} & \num{ 223.6} & \A{ -864} & \num{ 114.4} \\
      chim8-4.42 & \A{ -658} & \num{ 190.7} & \B{ -694} & \num{ 210.2} & \A{ -678} & \num{  89.6} & \A{ -678} & \num{  89.9} & \A{ -684} & \num{  58.8} \\
      chim8-4.43 & \A{ -734} & \num{ 108.1} & \A{ -740} & \num{  86.0} & \B{ -756} & \num{   4.6} & \A{ -742} & \num{  70.2} & \A{ -752} & \num{  23.9} \\
      chim8-4.44 & \A{ -742} & \num{ 145.7} & \A{ -704} & \num{ 328.5} & \B{ -772} & \num{ 174.5} & \A{ -756} & \num{  81.3} & \A{ -764} & \num{  48.2} \\
      chim8-4.45 & \A{ -818} & \num{ 251.7} & \B{ -846} & \num{  26.3} & \A{ -842} & \num{  29.5} & \A{ -842} & \num{  24.6} & \A{ -834} & \num{  81.0} \\
      chim8-4.46 & \A{ -854} & \num{ 154.6} & \A{ -854} & \num{ 128.0} & \A{ -840} & \num{ 152.5} & \B{ -874} & \num{  68.8} & \A{ -832} & \num{ 179.0} \\
      chim8-4.47 & \A{ -848} & \num{ 136.6} & \A{ -856} & \num{  66.1} & \B{ -868} & \num{  96.4} & \A{ -834} & \num{ 145.0} & \A{ -850} & \num{  88.2} \\
      chim8-4.48 & \A{ -826} & \num{  94.0} & \B{ -834} & \num{  12.2} & \A{ -812} & \num{  98.6} & \A{ -812} & \num{  98.5} & \A{ -832} & \num{  18.1} \\
      chim8-4.49 & \A{ -800} & \num{ 159.8} & \B{ -820} & \num{  15.1} & \A{ -760} & \num{ 268.9} & \A{ -746} & \num{ 328.3} & \A{ -786} & \num{ 154.9} \\
      chim8-4.50 & \A{ -822} & \num{ 114.2} & \A{ -802} & \num{ 189.1} & \A{ -814} & \num{ 124.9} & \B{ -842} & \num{  72.1} & \A{ -838} & \num{  39.9} \\
      \midrule
      \#best     &         1 &              &        19 &              &        11 &              &        13 &              &         9 &              \\
      AM prim-$\int$ (\#50) &           &       167.7  &           &       118.3  &           &       119.8  &           &       130.6  &           &       109.5  \\
      GM prim-$\int$ (\#50) &           &       148.0  &           &        73.5  &           &        94.6  &           &       100.0  &           &        79.4  \\
      \bottomrule
    \end{tabular*}
  \end{scriptsize}
\end{table}

Table~\ref{tab:QUBO} shows the best primal values of geometric scaling and
the version that also uses the heuristic based on geometric scaling on the
QUBO testset. For these instances, the other stand-alone augmentation methods do not perform
well -- and we skip their results here.

The three variants \textsf{geom-heur}, \textsf{geom-heur-infer}, and
\textsf{geom-heur-64} find significantly better primal solutions than the
default settings. Moreover, their primal integral is significantly
smaller. Among the three variants, \textsf{geom-heur-infer} performs
slightly better than the other two. However, all three variants find the
best solutions for some instances for which all other variants are not as
good. As for the other
two testsets, the geometric scaling heuristic uses more phases than
\textsf{geom-64}, on average (\textsf{geom-64}: 3.2, \textsf{geom-heur}:
19.2, \textsf{geom-heur-64}: 4.8). The good performance comes from the fact that usually the other
heuristics find good solutions, which can then easily be improved to even
better solutions by the geometric scaling heuristic.

In any case, these results are surpassed by \textsf{geom-64}, which finds
the largest number of best solutions and produces the smallest primal
integral. These excellent results arise from the fact that very few phases
are needed in order to arrive at a level of \(\mu\) that helps to improve
the primal solutions. Indeed, it is often the case that for a particular
\(\mu\) a series of improving solutions is found until the time limit is
reached.

In summary, the QUBO instances show the excellent potential of geometric
scaling. It is likely that extensive parameter tuning could help to even
improve these results.

\section*{Acknowledgements}
\label{sec:acknowledgements}

Research reported in this paper was partially supported by the NSF
grants CMMI-1300144 and CCF-1415460, as well as the AFOSR grant
FA9550-12-1-0151. We would like to thank Sanjeeb Dash for providing us
with the QUBO instances and valuable insights.

\bibliographystyle{abbrvnat}
\bibliography{bibliography}

\begin{thebibliography}{40}
\providecommand{\natexlab}[1]{#1}
\providecommand{\url}[1]{\texttt{#1}}
\expandafter\ifx\csname urlstyle\endcsname\relax
  \providecommand{\doi}[1]{doi: #1}\else
  \providecommand{\doi}{doi: \begingroup \urlstyle{rm}\Url}\fi

\bibitem[Achterberg(2009)]{Ach2009}
T.~Achterberg.
\newblock {SCIP}: Solving constraint integer programs.
\newblock \emph{Mathematical Programming Computation}, 1\penalty0 (1):\penalty0
  1--41, 2009.

\bibitem[Arora et~al.(2012)Arora, Hazan, and Kale]{arora2012multiplicative}
S.~Arora, E.~Hazan, and S.~Kale.
\newblock The multiplicative weights update method: a meta-algorithm and
  applications.
\newblock \emph{Theory of Computing}, 8\penalty0 (1):\penalty0 121--164, 2012.

\bibitem[Ben-Tal and Nemirovski(2001)]{bental2001lectures}
A.~Ben-Tal and A.~Nemirovski.
\newblock \emph{Lectures on modern convex optimization: analysis, algorithms,
  and engineering applications}.
\newblock Society for Industrial and Applied Mathematics, Philadelphia, PA,
  USA, 2001.

\bibitem[Berthold(2013)]{Berthold13}
T.~Berthold.
\newblock Measuring the impact of primal heuristics.
\newblock \emph{Operations Research Letters}, 41\penalty0 (6):\penalty0
  611--614, 2013.

\bibitem[Berthold(2014)]{Berthold14}
T.~Berthold.
\newblock \emph{Heuristic algorithms in global MINLP solvers}.
\newblock PhD thesis, TU Berlin, 2014.

\bibitem[Bienstock(1999)]{bienstock1999approximately}
D.~Bienstock.
\newblock Approximately solving large-scale linear programs. {I.}
  {S}trengthening lower bounds and accelerating convergence.
\newblock \emph{CORC Report}, 1999.

\bibitem[Bienstock(2002)]{bienstock2002potential}
D.~Bienstock.
\newblock \emph{Potential Function Methods for Approximately Solving Linear
  Programming Problems: Theory and Practice}, volume~53 of \emph{International
  Series in Operations Research \& Management Science}.
\newblock Springer, 2002.

\bibitem[Conn et~al.(2000)Conn, Gould, and Toint]{ConnGouldToint2000}
A.~R. Conn, N.~I.~M. Gould, and P.~L. Toint.
\newblock \emph{Trust Region Methods}.
\newblock SIAM, 2000.

\bibitem[Dash(2013)]{dash2013note}
S.~Dash.
\newblock A note on {QUBO} instances defined on {C}himera graphs.
\newblock \emph{preprint arXiv:1306.1202}, 2013.

\bibitem[Dash and Puget(2015)]{dash2015optima}
S.~Dash and J.-F. Puget.
\newblock On quadratic unconstrained binary optimization problems defined on
  {C}himera graphs.
\newblock \emph{Optima}, 98:\penalty0 2--6, 2015.

\bibitem[{De Loera} et~al.(2008){De Loera}, Hemmecke, K{\"{o}}ppe, and
  Weismantel]{LoeHKW08}
J.~A. {De Loera}, R.~Hemmecke, M.~K{\"{o}}ppe, and R.~Weismantel.
\newblock {FPTAS} for optimizing polynomials over the mixed-integer points of
  polytopes in fixed dimension.
\newblock \emph{Math. Program.}, 115\penalty0 (2):\penalty0 273--290, 2008.

\bibitem[De~Loera et~al.(2013)De~Loera, Hemmecke, and
  K{\"o}ppe]{de2013algebraic}
J.~A. De~Loera, R.~Hemmecke, and M.~K{\"o}ppe.
\newblock \emph{Algebraic and geometric ideas in the theory of discrete
  optimization}, volume~14 of \emph{MOS-SIAM Series on Optimization}.
\newblock SIAM, 2013.

\bibitem[De~Loera et~al.(2014)De~Loera, Hemmecke, and Lee]{de2014augmentation}
J.~A. De~Loera, R.~Hemmecke, and J.~Lee.
\newblock Augmentation in linear and integer linear programming.
\newblock \emph{Preprint arXiv:1408.3518}, 2014.

\bibitem[Edmonds and Karp(1972)]{edmonds1972theoretical}
J.~Edmonds and R.~M. Karp.
\newblock Theoretical improvements in algorithmic efficiency for network flow
  problems.
\newblock \emph{Journal of the ACM}, 19\penalty0 (2):\penalty0 248--264, 1972.

\bibitem[Fischetti and Lodi(2003)]{FisL03}
M.~Fischetti and A.~Lodi.
\newblock Local branching.
\newblock \emph{Mathematical Programming}, 98\penalty0 (1-3):\penalty0 23--47,
  2003.

\bibitem[Fischetti and Monaci(2014)]{fischetti2014proximity}
M.~Fischetti and M.~Monaci.
\newblock Proximity search for 0-1 mixed-integer convex programming.
\newblock \emph{Journal of Heuristics}, 20\penalty0 (6):\penalty0 709--731,
  2014.

\bibitem[Frank and Tardos(1987)]{frank1987application}
A.~Frank and {\'E}.~Tardos.
\newblock An application of simultaneous {D}iophantine approximation in
  combinatorial optimization.
\newblock \emph{Combinatorica}, 7\penalty0 (1):\penalty0 49--65, 1987.

\bibitem[Garg and Koenemann(2007)]{garg2007faster}
N.~Garg and J.~Koenemann.
\newblock Faster and simpler algorithms for multicommodity flow and other
  fractional packing problems.
\newblock \emph{SIAM Journal on Computing}, 37\penalty0 (2):\penalty0 630--652,
  2007.

\bibitem[Graham et~al.(1995)Graham, Gr{\"o}tschel, and
  Lov{\'a}sz]{graham1995handbook}
R.~L. Graham, M.~Gr{\"o}tschel, and L.~Lov{\'a}sz.
\newblock \emph{Handbook of combinatorics}, volume~1.
\newblock Elsevier, 1995.

\bibitem[Graver(1975)]{Gra75}
J.~E. Graver.
\newblock On the foundations of linear and integer linear programming i.
\newblock \emph{Mathematical Programming}, 9\penalty0 (1):\penalty0 207--226,
  1975.

\bibitem[Hansen et~al.(2006)Hansen, Mladenovi\'{c}, and
  Uro\v{s}evi\'{c}]{HanMU06}
P.~Hansen, N.~Mladenovi\'{c}, and D.~Uro\v{s}evi\'{c}.
\newblock Variable neighborhood search and local branching.
\newblock \emph{Computers and Operations Research}, 33:\penalty0 3034--3045,
  2006.

\bibitem[Hemmecke et~al.(2010)Hemmecke, K{\"{o}}ppe, Lee, and
  Weismantel]{HemLW10}
R.~Hemmecke, M.~K{\"{o}}ppe, J.~Lee, and R.~Weismantel.
\newblock Nonlinear integer programming.
\newblock In M.~J{\"{u}}nger, T.~M. Liebling, D.~Naddef, G.~L. Nemhauser, W.~R.
  Pulleyblank, G.~Reinelt, G.~Rinaldi, and L.~A. Wolsey, editors, \emph{50
  Years of Integer Programming 1958--2008 -- From the Early Years to the
  State-of-the-Art}, pages 561--618. Springer, 2010.

\bibitem[Hemmecke et~al.(2011)Hemmecke, Onn, and Weismantel]{HemOW11}
R.~Hemmecke, S.~Onn, and R.~Weismantel.
\newblock A polynomial oracle-time algorithm for convex integer minimization.
\newblock \emph{Math. Program.}, 126\penalty0 (1):\penalty0 97--117, 2011.

\bibitem[Hemmecke et~al.(2014)Hemmecke, Köppe, and Weismantel]{HemKW14}
R.~Hemmecke, M.~Köppe, and R.~Weismantel.
\newblock Graver basis and proximity techniques for block-structured separable
  convex integer minimization problems.
\newblock \emph{Mathematical Programming}, 145\penalty0 (1-2):\penalty0 1--18,
  2014.

\bibitem[Koch et~al.(2011)Koch, Achterberg, Andersen, Bastert, Berthold, Bixby,
  Danna, Gamrath, Gleixner, Heinz, Lodi, Mittelmann, Ralphs, Salvagnin, Steffy,
  and Wolter]{Kochetal2011}
T.~Koch, T.~Achterberg, E.~Andersen, O.~Bastert, T.~Berthold, R.~E. Bixby,
  E.~Danna, G.~Gamrath, A.~M. Gleixner, S.~Heinz, A.~Lodi, H.~Mittelmann,
  T.~Ralphs, D.~Salvagnin, D.~E. Steffy, and K.~Wolter.
\newblock M{IPLIB} 2010: mixed integer programming library version 5.
\newblock \emph{Math. Program. Comput.}, 3\penalty0 (2):\penalty0 103--163,
  2011.

\bibitem[Lee et~al.(2008)Lee, Onn, and Weismantel]{LeeOW08}
J.~Lee, S.~Onn, and R.~Weismantel.
\newblock On test sets for nonlinear integer maximization.
\newblock \emph{Oper. Res. Lett.}, 36\penalty0 (4):\penalty0 439--443, 2008.

\bibitem[Lee et~al.(2012)Lee, Onn, Romanchuk, and Weismantel]{LeeORW12}
J.~Lee, S.~Onn, L.~Romanchuk, and R.~Weismantel.
\newblock The quadratic {G}raver cone, quadratic integer minimization, and
  extensions.
\newblock \emph{Math. Program.}, 136\penalty0 (2):\penalty0 301--323, 2012.

\bibitem[Letchford and Lodi(2003)]{letchford2003augment}
A.~N. Letchford and A.~Lodi.
\newblock An augment-and-branch-and-cut framework for mixed 0-1 programming.
\newblock In \emph{Combinatorial Optimization---Eureka, You Shrink!}, pages
  119--133. Springer, 2003.

\bibitem[McCormick and Shioura(2000)]{mccormick2000minimum}
S.~T. McCormick and A.~Shioura.
\newblock Minimum ratio canceling is oracle polynomial for linear programming,
  but not strongly polynomial, even for networks.
\newblock \emph{Operations Research Letters}, 27\penalty0 (5):\penalty0
  199--207, 2000.

\bibitem[Nemhauser and Wolsey(1988)]{nw1988}
G.~Nemhauser and L.~Wolsey.
\newblock \emph{Integer and Combinatorial Optimization}.
\newblock Wiley, 1988.

\bibitem[Onn(2010)]{Onn10}
S.~Onn.
\newblock \emph{Nonlinear Discrete Optimization: An Algorithmic Theory}.
\newblock Zurich lectures in advanced mathematics. European Mathematical
  Society Publishing House, 2010.

\bibitem[Orlin and Ahuja(1992)]{orlin1992new}
J.~B. Orlin and R.~K. Ahuja.
\newblock New scaling algorithms for the assignment and minimum mean cycle
  problems.
\newblock \emph{Mathematical Programming}, 54\penalty0 (1-3):\penalty0 41--56,
  1992.

\bibitem[Plotkin et~al.(1995)Plotkin, Shmoys, and Tardos]{plotkin1995fast}
S.~A. Plotkin, D.~B. Shmoys, and {\'E}.~Tardos.
\newblock Fast approximation algorithms for fractional packing and covering
  problems.
\newblock \emph{Mathematics of Operations Research}, 20\penalty0 (2):\penalty0
  257--301, 1995.

\bibitem[Rockafellar(1976)]{Rockafellar1976}
R.~T. Rockafellar.
\newblock Monotone operators and the proximal point algorithm.
\newblock \emph{SIAM Journal on Control and Optimization}, 14\penalty0
  (5):\penalty0 877--898, 1976.

\bibitem[Scarf(1997)]{Sca97}
H.~E. Scarf.
\newblock Test sets for integer programs.
\newblock \emph{Mathematical Programming}, 79\penalty0 (1-3):\penalty0
  355--368, 1997.

\bibitem[Schrijver(1986)]{schrijver1986theory}
A.~Schrijver.
\newblock \emph{{Theory of linear and integer programming}}.
\newblock Wiley, 1986.

\bibitem[Schulz and Weismantel(2002)]{schulz2002complexity}
A.~S. Schulz and R.~Weismantel.
\newblock The complexity of generic primal algorithms for solving general
  integer programs.
\newblock \emph{Mathematics of Operations Research}, 27\penalty0 (4):\penalty0
  681--692, 2002.

\bibitem[Schulz et~al.(1995)Schulz, Weismantel, and Ziegler]{schulz19950}
A.~S. Schulz, R.~Weismantel, and G.~M. Ziegler.
\newblock 0/1-integer programming: Optimization and augmentation are
  equivalent.
\newblock In \emph{Algorithms -- {ESA} '95, Proceedings}, pages 473--483, 1995.

\bibitem[SCIP()]{SCIP}
SCIP.
\newblock {S}olving {C}onstraint {I}nteger {P}rograms, {V}ersion 3.2.0, 2015.
\newblock \url{http://scip.zib.de/}.

\bibitem[Wallacher and Zimmermann(1992)]{wallacher1992combinatorial}
C.~Wallacher and U.~Zimmermann.
\newblock A combinatorial interior point method for network flow problems.
\newblock \emph{Mathematical programming}, 56\penalty0 (1-3):\penalty0
  321--335, 1992.

\end{thebibliography}

\includepdf[pages=-]{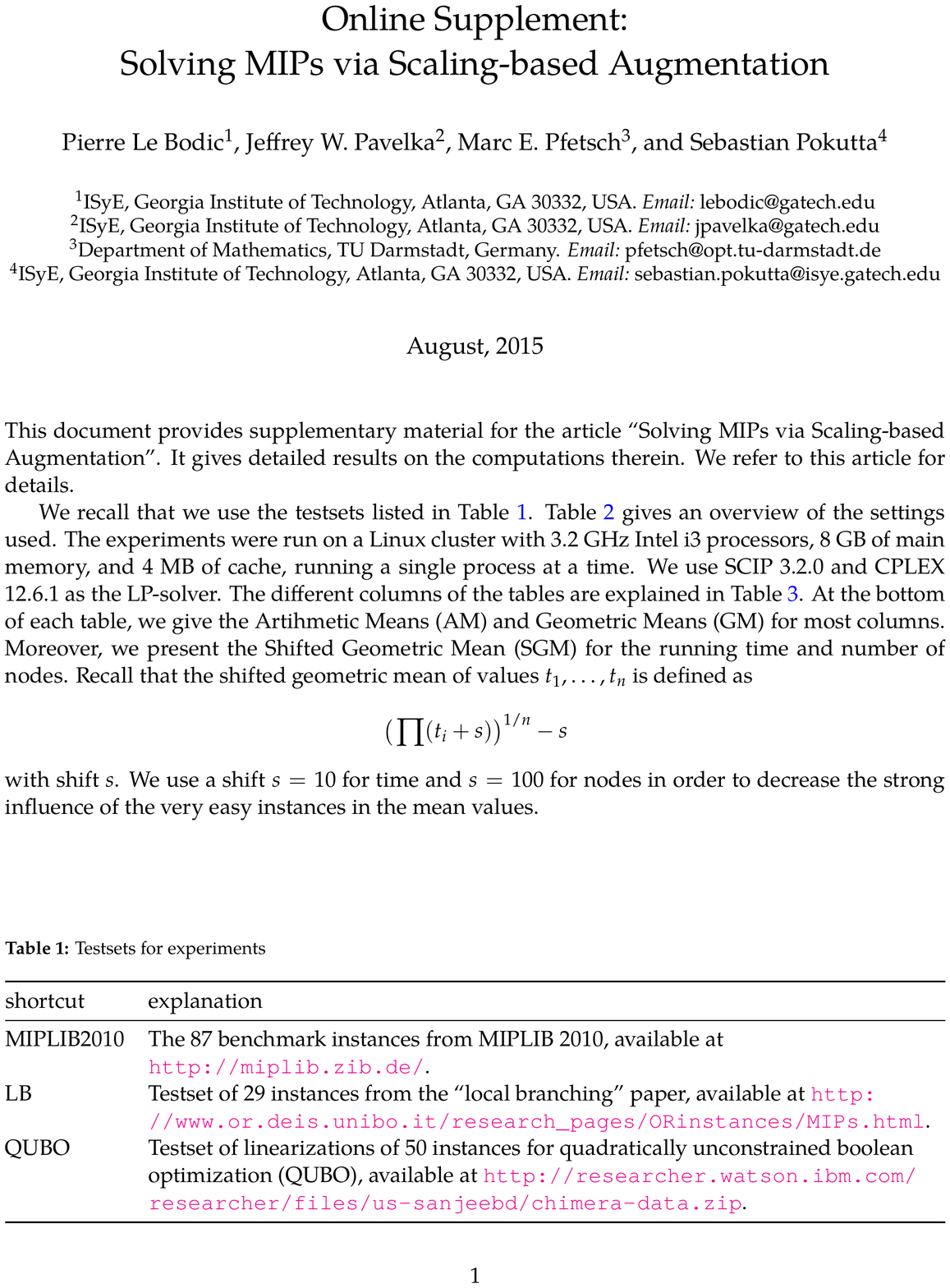}

\end{document}